\journal{}
\theoremstyle{plain}
  \newtheorem{thm}{Theorem}[section]
  \newtheorem{lem}[thm]{Lemma}
  \newtheorem{prop}[thm]{Proposition}
  \newtheorem{cor}[thm]{Corollary}
\theoremstyle{definition}
  \newtheorem{defn}[thm]{Definition}
  \newtheorem{exmp}[thm]{Example}
  \newtheorem{rem}[thm]{Remark}
\newtheorem*{ques}{Question}
\DeclareMathAlphabet{\mathcal}{OMS}{cmsy}{m}{n}
\def\ps@pprintTitle{%
 \let\@oddhead\@empty
  \let\@evenhead\@empty
  \def\@oddfoot{\vbox{\hsize=\textwidth\footnotesize
  \vskip 8pt
  \copyright 2018. This manuscript version is made available under the CC-BY-NC-ND 4.0 license \url{https://creativecommons.org/licenses/by-nc-nd/4.0/}. The published version is available at \url{https://doi.org/10.1016/j.fss.2018.07.011}.\\
  }}%
  \let\@evenfoot\@oddfoot}
\newcommand{\da}{\downarrow}
\newcommand{\ra}{\rightarrow}
\newcommand{\Ra}{\Rightarrow}
\newcommand{\lda}{\swarrow}
\newcommand{\rda}{\searrow}
\newcommand{\bv}{\bigvee}
\newcommand{\bw}{\bigwedge}
\newcommand{\dv}{\dashv}
\newcommand{\rqa}{\rightsquigarrow}
\newcommand{\opl}{\oplus}
\newcommand{\dbw}{\displaystyle\bw}
\renewcommand{\phi}{\varphi}
\newcommand{\al}{\alpha}
\newcommand{\be}{\beta}
\newcommand{\De}{\Delta}
\newcommand{\ka}{\kappa}
\newcommand{\CQ}{\mathcal{Q}}
\newcommand{\sD}{{\sf D}}
\newcommand{\sQ}{{\sf Q}}
\newcommand{\FG}{\mathfrak{G}}
\newcommand{\FT}{\mathfrak{T}}
\newcommand{\Fb}{\mathfrak{b}}
\newcommand{\Ff}{\mathfrak{f}}
\newcommand{\Cat}{{\bf Cat}}
\newcommand{\Met}{{\bf Met}}
\newcommand{\Sup}{{\bf Sup}}
\newcommand{\ParMet}{{\bf ParMet}}
\newcommand{\ProbMet}{{\bf ProbMet}}
\newcommand{\ProbParMet}{{\bf ProbParMet}}
\newcommand{\QCat}{\CQ\text{-}\Cat}
\newcommand{\sQCat}{\sQ\text{-}\Cat}
\newcommand{\op}{{\rm op}}
\newcommand{\DQ}{\sD\sQ}
\newcommand{\DQCat}{\DQ\text{-}\Cat}
\newcommand{\with}{\mathrel{\&}}
\newcommand{\Gb}{\FG_{\Fb}}
\newcommand{\Gf}{\FG_{\Ff}}
\numberwithin{equation}{section}
\begin{document}

\begin{frontmatter}

%% Title, authors and addresses

%% use the tnoteref command within \title for footnotes;
%% use the tnotetext command for theassociated footnote;
%% use the fnref command within \author or \address for footnotes;
%% use the fntext command for theassociated footnote;
%% use the corref command within \author for corresponding author footnotes;
%% use the cortext command for theassociated footnote;
%% use the ead command for the email address,
%% and the form \ead[url] for the home page:
%% \title{Title\tnoteref{label1}}
%% \tnotetext[label1]{}
%% \author{Name\corref{cor1}\fnref{label2}}
%% \ead{email address}
%% \ead[url]{home page}
%% \fntext[label2]{}
%% \cortext[cor1]{}
%% \address{Address\fnref{label3}}
%% \fntext[label3]{}

\title{Towards probabilistic partial metric spaces: Diagonals between distance distributions}

%% use optional labels to link authors explicitly to addresses:
%% \author[label1,label2]{}
%% \address[label1]{}
%% \address[label2]{}

\author{Jialiang He}
\ead{jialianghe@scu.edu.cn}

\author{Hongliang Lai}
\ead{hllai@scu.edu.cn}

\author{Lili Shen\corref{cor}}
\ead{shenlili@scu.edu.cn}

\cortext[cor]{Corresponding author.}
\address{School of Mathematics, Sichuan University, Chengdu 610064, China}

\begin{abstract}
The quantale of distance distributions is of fundamental importance for understanding probabilistic metric spaces as enriched categories. Motivated by the categorical interpretation of partial metric spaces, we are led to investigate the quantaloid of diagonals between distance distributions, which is expected to establish the categorical foundation of probabilistic partial metric spaces. Observing that the quantale of distance distributions w.r.t. an arbitrary continuous t-norm is non-divisible, we precisely characterize diagonals between distance distributions, and prove that one-step functions are the only distance distributions on which the set of diagonals coincides with the generated down set.
%the set of diagonals on a distance distribution coincides with its down set if, and only if, it is a step function.
\end{abstract}

\begin{keyword}
%% keywords here, in the form: keyword \sep keyword
Enriched category \sep Quantale \sep Quantaloid \sep Partial metric space \sep Probabilistic partial metric space \sep Continuous t-norm

%% PACS codes here, in the form: \PACS code \sep code

%% MSC codes here, in the form: \MSC code \sep code
%% or \MSC[2008] code \sep code (2000 is the default)
\MSC[2010] 18D20 \sep 06F07 \sep 54E70
\end{keyword}

\end{frontmatter}

%\tableofcontents

%% \linenumbers

%% main text

\section{Introduction}

Inspired by Lawvere's pioneering work \cite{Lawvere1973} which presents metric spaces as enriched categories, during the past decades category theory has been playing an important role in the study of metric spaces and their generalizations. Lawvere's construction, in modern terms, is based on the \emph{quantale}
$$[0,\infty]_+=([0,\infty],+,0)$$
whose underlying complete lattice is the extended non-negative real line $[0,\infty]$ equipped with the order ``$\geq$''. Categories enriched in the quantale $[0,\infty]_+$, i.e., sets $X$ equipped with a map $\al:X\times X\to[0,\infty]$ such that
\begin{enumerate}[label=(M\arabic*),leftmargin=2.8em]
\item \label{I-M:d}
    $\al(x,x)=0$, and
\item \label{I-M:t}
    $\al(x,z)\leq\al(y,z)+\al(x,y)$
\end{enumerate}
for all $x,y,z\in X$, are precisely \emph{(generalized) metric spaces}; that is, (classical) metric spaces dropping the requirements of \emph{symmetry} ($\al(x,y)=\al(y,x)$), \emph{finiteness} ($\al(x,y)<\infty$) and \emph{separatedness} ($\al(x,y)=\al(y,x)=0\iff x=y$).

The motivation of this paper comes from two important generalizations of metric spaces, i.e., \emph{probabilistic metric spaces} \cite{Menger1942,Schweizer1983} (also known as \emph{fuzzy metric spaces} \cite{George1994,Kramosil1975}) and \emph{partial metric spaces} \cite{Bukatin2009,Matthews1994}, both of which can be understood as enriched categories.

Probabilistic metric spaces are metric spaces in which the distance is defined as a map $\phi:[0,\infty]\to[0,1]$ with
$$\phi(t)=\bv_{s<t}\phi(s)$$
for all $t\in[0,\infty]$, called a \emph{distance distribution}, rather than a non-negative real number. As discovered by Chai \cite{Chai2009} and investigated by Hofmann-Reis \cite{Hofmann2013a}, (generalized) probabilistic metric spaces are categories enriched in the quantale
$$\De_*=(\De,\otimes_*,\ka_{0,1})$$
of distance distributions w.r.t. a continuous t-norm $*$ on the unit interval $[0,1]$ (see Proposition \ref{De-quantale} for further explanations) which, in elementary words, are precisely sets $X$ equipped with a map $\al:X\times X\times[0,\infty]\to[0,1]$ such that
\begin{enumerate}[label=(ProbM\arabic*),leftmargin=4.6em,start=0]
\item \label{I-ProbM:d}
    $\al(x,y,-):[0,\infty]\to[0,1]$ is a distance distribution,
\item \label{I-ProbM:r}
    $\al(x,x,t)=1$ for all $t>0$, and
\item \label{I-ProbM:t}
    $\al(y,z,r)*\al(x,y,s)\leq\al(x,z,r+s)$
\end{enumerate}
for all $x,y,z\in X$ and $r,s\in[0,\infty]$.

Partial metric spaces are metric spaces in which the distance from a point to itself may not be zero. Explicitly, (generalized) partial metric spaces are sets $X$ equipped a with a map $\al:X\times X\to[0,\infty]$ such that
\begin{enumerate}[label=(PM\arabic*),leftmargin=3.3em]
\item \label{I-PM:d}
    $\al(x,x)\vee \al(y,y)\leq \al(x,y)$, and
\item \label{I-PM:t}
    $\al(x,z)\leq\al(y,z)-\al(y,y)+\al(x,y)$
\end{enumerate}
for all $x,y,z\in X$. As discovered by H{\"o}hle-Kubiak \cite{Hoehle2011a} and Pu-Zhang \cite{Pu2012} and formalized later by Stubbe \cite{Stubbe2014} in a more general setting, although partial metric spaces are not categories enriched in the quantale $[0,\infty]_+$, one may construct a \emph{quantaloid of diagonals} of the quantale $[0,\infty]_+$, usually denoted by $\sD[0,\infty]_+$, such that partial metric spaces are precisely categories enriched in the quantaloid $\sD[0,\infty]_+$.

It is now natural to ask whether it is possible to consider the probabilistic version of partial metric spaces or, equivalently, the partial version of probabilistic metric spaces. As far as we know, this topic has been considered recently by several authors under the name \emph{probabilistic partial metric spaces} or \emph{fuzzy partial metric spaces}; see, e.g. \cite{Amer2016,Sedghi2015,Wu2017,Yue2015,Yue2014}. Following the categorical interpretation of partial metric spaces, this paper aims at a full-scale investigation of the quantaloid
$$\sD\De_*$$
of diagonals of the quantale $\De_*$ of distance distributions w.r.t. a continuous t-norm $*$ on $[0,1]$, so that a categorical foundation of probabilistic partial metric spaces can be established, which was more or less neglected in the existing references.

In general, let $\sQ=(\sQ,\with,1)$ be a commutative and integral quantale (i.e., complete residuated lattice), in which
$$p\with q\leq r\iff p\leq q\ra r$$
for all $p,q,r\in\sQ$. With $\DQ$ denoting the quantaloid of diagonals of $\sQ$ (see Proposition \ref{DQ-def}), a \emph{$\DQ$-category} (also called \emph{partial $\sQ$-category}, see \cite{Hofmann2016}) consists of a set $X$ and a map $\al:X\times X\to\sQ$ such that
\begin{enumerate}[label=(\arabic*)]
\item $\al(x,y)$ is a diagonal between $\al(x,x)$ and $\al(y,y)$, and
\item $(\al(y,y)\ra\al(y,z))\with\al(x,y)=\al(y,z)\with(\al(y,y)\ra\al(x,y))\leq\al(x,z)$
\end{enumerate}
for all $x,y,z\in X$.

By taking $\sQ=[0,\infty]_+$ we will see that $\sD[0,\infty]_+$-categories are exactly partial metric spaces, as pointed out in \cite{Hoehle2011a,Pu2012}. In this case, it is easy to determine diagonals between non-negative real numbers; indeed, $\al(x,y)$ is a diagonal between $\al(x,x)$ and $\al(y,y)$ if, and only if, $\al(x,x)\vee \al(y,y)\leq \al(x,y)$, which is precisely the condition \ref{I-PM:d} for partial metric spaces $(X,\al)$. However, the elegant form of diagonals between non-negative real numbers relies heavily on the fact that the quantale $[0,\infty]_+$ is \emph{divisible}. In fact, in every divisible quantale $\sQ$ it holds that
\begin{equation} \label{I-DQpq-div}
\DQ(p,q)=\,\da\!(p\wedge q)
\end{equation}
for all $p,q\in\sQ$; that is, diagonals between $p$ and $q$ are exactly the down set generated by $p\wedge q$. Unfortunately, Proposition \ref{De-non-div} tells us that the quantale $\De_*$ of distance distributions w.r.t. any continuous t-norm $*$ is non-divisible, and therefore the following question becomes a tough one for the quantale $\De_*$:

\begin{ques}
What are the diagonals between distance distributions?
\end{ques}

With necessary preparations in Section \ref{Preliminaries}, we investigate the quantale $\De_*$ and the quantaloid $\sD\De_*$ thoroughly in Sections \ref{Delta} and \ref{DDelta}, and the main results of this paper answer the above question from two aspects:
\begin{itemize}
\item Theorem \ref{dd-div} characterizes diagonals between any pair of distance distributions in the quantale $\De_*$ w.r.t. an arbitrary continuous t-norm $*$.
\item Theorem \ref{step-only-divisible-lower} shows that the characterization of diagonals obtained from Theorem \ref{dd-div} cannot be simplified to Equation \eqref{I-DQpq-div} unless the involved distance distributions intersect to yield one-step functions, whose proof is the most challenging one in this paper.
\end{itemize}
As an application of these results, in Definition \ref{ProbPM} we propose a rigorous definition of probabilistic partial metric spaces w.r.t. a continuous t-norm $*$ through $\sD\De_*$-categories. Finally, we attach an appendix with some interesting results about the categorical connections between $\sQ$-categories and $\DQ$-categories which, in particular, reveal the interactions between (probabilistic) metric spaces and their partial version.

\section{Diagonals between non-negative real numbers: Partial metric spaces as enriched categories} \label{Preliminaries}

%\subsection{Partial metric spaces} \label{PM}

A \emph{(generalized) partial metric space} \cite{Bukatin2009,Matthews1994,Pu2012} is a set $X$ that comes equipped with a map $\al:X\times X\to[0,\infty]$ such that
\begin{enumerate}[label=(PM\arabic*),leftmargin=3.3em]
\item \label{PM:d}
    $\al(x,x)\vee \al(y,y)\leq \al(x,y)$, and
\item \label{PM:t}
    $\al(x,z)\leq\al(y,z)-\al(y,y)+\al(x,y)$
\end{enumerate}
for all $x,y,z\in X$. In particular, a partial metric space $(X,\al)$ satisfying $\al(x,x)=0$ for all $x\in X$ is exactly a \emph{(generalized) metric space} in the sense of Lawvere \cite{Lawvere1973}; that is, a set $X$ equipped with a map $\al:X\times X\to[0,\infty]$ such that
\begin{enumerate}[label=(M\arabic*),leftmargin=2.7em]
\item \label{M:r}
    $\al(x,x)=0$, and
\item \label{M:t}
    $\al(x,z)\leq\al(y,z)+\al(x,y)$
\end{enumerate}
for all $x,y,z\in X$.

\begin{rem} \label{ParMet-classical}
As noted above, our terminology of partial metric spaces here naturally extends Lawvere's notion of generalized metric spaces; so, they are more precisely \emph{generalized} partial metric spaces in the sense of Pu-Zhang \cite{Pu2012}. Besides the conditions \ref{PM:d} and \ref{PM:t}, the notion of ``partial metric'' originally introduced by Matthews \cite{Matthews1994} additionally requires $\al$ to be \emph{symmetric} ($\al(x,y)=\al(y,x)$), \emph{finitary} ($\al(x,y)<\infty$) and \emph{separated} ($\al(x,x)=\al(y,y)=\al(x,y)=\al(y,x)\iff x=y$).
\end{rem}

Although it has been well known for decades that metric spaces can be studied as enriched categories \cite{Lawvere1973} or, more precisely, \emph{quantale}-enriched categories, it is only until recently that partial metric spaces are also understood as enriched categories in a more general framework, where a \emph{quantaloid} is considered as the base for enrichment \cite{Hoehle2011a,Pu2012}.

Explicitly, a \emph{quantaloid} $\CQ$ is a locally ordered category whose hom-sets are complete lattices, such that the composition $\circ$ of $\CQ$-arrows preserves suprema in each variable, i.e.,
$$v\circ\Big(\bv_{i\in I} u_i\Big)=\bv_{i\in I}v\circ u_i\quad\text{and}\quad\Big(\bv_{i\in I} v_i\Big)\circ u=\bv_{i\in I}v_i\circ u$$
for all $\CQ$-arrows $u,u_i:p\to q$, $v,v_i:q\to r$ $(i\in I)$. Hence, the corresponding right adjoints induced by the composition maps
$$-\circ u\dv -\lda u:\ \CQ(p,r)\to\CQ(q,r)\quad\text{and}\quad v\circ -\dv v\rda -:\ \CQ(p,r)\to\CQ(p,q)$$
satisfy
$$v\circ u\leq w\iff v\leq w\lda u\iff u\leq v\rda w$$
for all $\CQ$-arrows $u:p\to q$, $v:q\to r$, $w:p\to r$, where the operations $\lda$ and $\rda$ are called \emph{left} and \emph{right implications} in $\CQ$, respectively.

Given a \emph{small} quantaloid $\CQ$ (i.e., $\CQ$ has a set $\CQ_0$ of objects), a \emph{$\CQ$-category} (or, a \emph{category enriched in $\CQ$}) consists of a set $X$, a map $|\text{-}|:X\to\CQ_0$, and a family of $\CQ$-arrows $\al(x,y)\in\CQ(|x|,|y|)$ $(x,y\in X)$, subject to
$$1_{|x|}\leq \al(x,x)\quad\text{and}\quad \al(y,z)\circ \al(x,y)\leq \al(x,z)$$
for all $x,y,z\in X$. A \emph{$\CQ$-functor} $f:(X,\al)\to(Y,\be)$ between $\CQ$-categories $(X,\al)$, $(Y,\be)$ is a map $f:X\to Y$ such that
$$|x|=|fx|\quad\text{and}\quad \al(x,y)\leq \be(fx,fy)$$
for all $x,y\in X$. The category of $\CQ$-categories and $\CQ$-functors is denoted by $\QCat$.

A one-object quantaloid is precisely a \emph{(unital) quantale}. Let $\with$ denote the multiplication in a quantale $\sQ$, i.e., the composition of arrows in the unique hom-set. We say that
\begin{itemize}
\item $\sQ$ is \emph{commutative} if $p\with q=q\with p$ for all $p,q\in\sQ$;
\item $\sQ$ is \emph{integral} if the unit $1$ of $\sQ$ is also the top element of the complete lattice $\sQ$.
\end{itemize}
Throughout this paper, we let $\sQ=(\sQ,\with,1)$ denote a commutative and integral quantale, which is also known as a \emph{complete residuated lattice}, and we write
$$p\ra q:=q\lda p=p\rda q$$
for implications in $\sQ$. In what follows we are particularly interested in the quantaloid of \emph{diagonals} of $\sQ$:

\begin{prop} \label{DQ-def} (See \cite{Hoehle2011a,Pu2012,Stubbe2014}.)
The following data defines a quantaloid $\DQ$ of  ``diagonals of $\sQ$'':
\begin{itemize}
\item objects of $\DQ$ are elements $p,q,r,\dots$ in $\sQ$;
\item for $p,q\in\sQ$, a morphism $d:p\rqa q$ in $\DQ$, called a \emph{diagonal} from $p$ to $q$, is an element $d\in\sQ$ with
    \begin{equation} \label{diagonal-def}
    (p\ra d)\with p=d=q\with(q\ra d);
    \end{equation}
\item for diagonals $d:p\rqa q$, $e:q\rqa r$, the composition $e\diamond d:p\rqa r$ is given by
    \begin{equation} \label{diagonal-comp}
    e\diamond d=(q\ra e)\with d=e\with(q\ra d);
    \end{equation}
\item $q:q\rqa q$ is the identity diagonal on $q$.
\end{itemize}
\end{prop}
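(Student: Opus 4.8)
The plan is to verify the quantaloid axioms in turn: well-definedness of the composition in \eqref{diagonal-comp}, the unit laws, associativity, the complete-lattice structure on each hom-set, and the preservation of suprema by $\diamond$ in each variable. The only machinery required is the residuation adjunction in the commutative integral quantale $\sQ$; concretely I would use $a\with(a\ra b)\leq b$ and $a\leq b\ra(b\with a)$, which together yield the ``cancellation'' identities $(a\ra(a\with b))\with a=a\with b$ and $b\with(b\ra(a\with b))=a\with b$, and I would note at the outset that a diagonal $d:p\rqa q$ automatically satisfies $d\leq p\wedge q$ as well as both halves of \eqref{diagonal-def}, namely $d=(p\ra d)\with p$ and $d=q\with(q\ra d)$.

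First I would show the composition is well-defined. Given $d:p\rqa q$ and $e:q\rqa r$, substituting $d=q\with(q\ra d)$ into $(q\ra e)\with d$ and applying \eqref{diagonal-def} to $e$ in the form $(q\ra e)\with q=e$ gives $(q\ra e)\with d=e\with(q\ra d)$, so the two expressions defining $e\diamond d$ coincide. Writing $e\diamond d=(q\ra e)\with(p\ra d)\with p$ and putting $a=(q\ra e)\with(p\ra d)$, the first cancellation identity forces $(p\ra(e\diamond d))\with p=a\with p=e\diamond d$; writing instead $e\diamond d=r\with\bigl((r\ra e)\with(q\ra d)\bigr)$ and using the second cancellation identity forces $r\with(r\ra(e\diamond d))=e\diamond d$. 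Hence $e\diamond d:p\rqa r$, and $\diamond$ is a legitimate operation.

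Next, the unit laws: since $\sQ$ is integral, $q\ra q=1$ is the top, so $q:q\rqa q$ is indeed a diagonal, and $1_q\diamond d=(q\ra q)\with d=d$, $d\diamond 1_p=(p\ra d)\with p=d$. For associativity, applied to $d:p\rqa q$, $e:q\rqa r$, $f:r\rqa s$, the device is that the equality of the two formulas for $f\diamond e$ is precisely $(r\ra f)\with e=f\with(r\ra e)$; expanding $(f\diamond e)\diamond d=(f\diamond e)\with(q\ra d)$ and $f\diamond(e\diamond d)=(r\ra f)\with(e\diamond d)$ with the appropriate halves of \eqref{diagonal-comp} reduces both to $f\with(r\ra e)\with(q\ra d)$. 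For the hom-sets: the set of diagonals from $p$ to $q$, viewed as a subset of $\sQ$, contains the bottom element $\bot$ (trivially a diagonal) and is closed under arbitrary $\sQ$-suprema — if $d_i=(p\ra d_i)\with p=q\with(q\ra d_i)$ then $(p\ra\bv_i d_i)\with p\geq\bv_i d_i$ by monotonicity and $\leq\bv_i d_i$ by the counit, and symmetrically on the $q$-side — so $\DQ(p,q)$ is a complete lattice whose joins are those of $\sQ$. Finally, since $e\diamond d=(q\ra e)\with d$ and $\with$ distributes over suprema in each variable in $\sQ$, so does $\diamond$ over the ($\DQ$-)suprema just identified.

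I expect the only genuinely delicate step to be the well-definedness of the composition: one must simultaneously check that the two prescribed formulas agree and that their common value again satisfies both halves of \eqref{diagonal-def}, and this is exactly where the cancellation identities must be invoked with the correct arguments. Everything else is routine bookkeeping with the adjunction, and in fact this proposition is essentially due to \cite{Hoehle2011a,Pu2012,Stubbe2014}; I would reproduce only the verifications not already made explicit in those sources.
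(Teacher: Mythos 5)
Your verification is correct and is precisely the standard argument that the paper itself omits, deferring to the cited references and to Remark~\ref{diagonal} (where the general quantaloid case is declared straightforward): the residuation counit and the two cancellation identities give well-definedness of $\diamond$, the unit laws via $q\ra q=1$, associativity by reducing both bracketings to $f\with(r\ra e)\with(q\ra d)$, and closure of each hom-set under $\sQ$-suprema. The only wording to tighten is the final step: sup-preservation of $\diamond$ in the variable $e$ should invoke the formula $e\diamond d=e\with(q\ra d)$ rather than $(q\ra e)\with d$, since $q\ra(-)$ preserves infima, not suprema — but as you have already shown the two expressions in \eqref{diagonal-comp} coincide, this is merely a matter of citing the appropriate half of that equation for each variable.
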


Given $q\in\sQ$, following the terminology in \cite{Hoehle2015}, an element $d\in\sQ$ is said to be \emph{divisible by $q$} if there exists $p\in\sQ$ such that $q\with p=d$. Note that
\begin{equation} \label{diagonal-divisible}
q\with(q\ra d)=d\iff\exists p\in\sQ:\ q\with p=d.
\end{equation}
Hence, from \eqref{diagonal-def} we see that $d:p\rqa q$ is a diagonal if, and only if, $d$ is divisible by $p$ and $q$. In particular, $d$ is divisible by $q$ if, and only if, $d:q\rqa q$ is a diagonal on $q$.

We say that the quantale $\sQ$ is \emph{divisible} \cite{Hajek1998,Hoehle1995a} if $d$ is a diagonal on $q$ (or equivalently, $d$ is divisible by $q$) whenever $d\leq q$ in $\sQ$. It is easy to check that divisible quantales are necessarily integral (see, e.g., the proof of \cite[Proposition 2.1]{Pu2012}).

\begin{rem} \label{diagonal}
The construction of $\DQ$ does not rely on the commutativity or the integrality of $\sQ$. In fact, one may construct the quantaloid $\sD\CQ$ of ``diagonals of $\CQ$'' for any quantaloid $\CQ$ as considered in \cite{Stubbe2014}. Explicitly, a $\CQ$-arrow $d:p_1\to q_2$ with
$$(d\lda u)\circ u=d=v\circ(v\rda d),$$
$$\bfig
\square<700,500>[p_1`p_2`q_1`q_2;v\rda d`u`v`d\lda u]
\morphism(0,500)/-->/<700,-500>[p_1`q_2;d]
\efig$$
denoted by $d:u\rqa v$, is called a \emph{diagonal} between $\CQ$-arrows $u$ and $v$, which is precisely the \emph{diagonal} of the above commutative square. The composition of diagonals $d:u\to v$, $e:v\to w$ is given by
\begin{align*}
e\diamond d&=(e\lda v)\circ d=(e\lda v)\circ(d\lda u)\circ u\\
&=e\circ(v\rda d)=w\circ(w\rda e)\circ(v\rda d).
\end{align*}
%\begin{equation} \label{diagonal-comp-diag}
$$\bfig
\iiixii<700,500>[p_1`p_2`p_3`q_1`q_2`q_3;v\rda d`w\rda e`u`v`w`d\lda u`e\lda v]
\morphism(0,500)/-->/<700,-500>[p_1`q_2;d]
\morphism(700,500)/-->/<700,-500>[p_2`q_3;e]
\iiixii(2000,0)/->`->`->``->`->`->/<700,500>[p_1`p_2`p_3`q_1`q_2`q_3;v\rda d`w\rda e`u``w`d\lda u`e\lda v]
\morphism(2000,500)/-->/<1400,-500>[p_1`q_3;e\diamond d]
\morphism(2000,500)|b|/-->/<700,-500>[p_1`q_2;d]
\morphism(2700,500)/-->/<700,-500>[p_2`q_3;e]
\place(1700,250)[\mapsto]
\efig$$
%\end{equation}
It is straightforward to check that $\CQ$-arrows and diagonals constitute a quantaloid $\sD\CQ$. In particular for a commutative and integral quantale $\sQ$, if we denote by $\star$ the single object of the one-object quantaloid $\sQ$, then the commutative square
$$\bfig
\square<700,500>[\star`\star`\star`\star;q\ra d`p`q`p\ra d]
\morphism(0,500)/-->/<700,-500>[\star`\star;d]
\efig$$
illustrates a diagonal $d:p\rqa q$ in $\DQ$ defined by Equation \eqref{diagonal-def}. %and the composition \eqref{diagonal-comp} can also be expressed using the commutative squares \eqref{diagonal-comp-diag} similarly.
\end{rem}

Applying the definition of $\CQ$-categories to the special case of $\CQ=\DQ$, one sees that a $\DQ$-category consists of a set $X$, a map $|\text{-}|:X\to\sQ$, and a map $\al:X\times X\to\sQ$, such that
\begin{enumerate}[label=(\arabic*)]
\item \label{DQ-Cat:d}
    $\al(x,y)=|x|\with(|x|\ra\al(x,y))=|y|\with(|y|\ra\al(x,y))$, i.e., $\al(x,y)\in\DQ(|x|,|y|)$,
\item \label{DQ-Cat:r}
    $|x|\leq\al(x,x)$,
\item \label{DQ-Cat:t}
    $(|y|\ra\al(y,z))\with\al(x,y)=\al(y,z)\with(|y|\ra\al(x,y))\leq\al(x,z)$
\end{enumerate}
for all $x,y,z\in X$. Since $\sQ$ is integral, it is easy to deduce that
\begin{equation} \label{DQpq}
\DQ(p,q)\subseteq\,\da\!(p\wedge q)
\end{equation}
for all $p,q\in\sQ$, where $\da\!(p\wedge q)$ is the down set generated by $p\wedge q$. Hence, the condition \ref{DQ-Cat:d} together with \eqref{DQpq} implies $\al(x,x)\leq|x|$ for all $x\in X$ and, moreover, the condition \ref{DQ-Cat:r} forces $|x|=\al(x,x)$. Therefore:

\begin{prop} \label{integral-DQCat}
A map $\al:X\times X\to\sQ$ defines a $\DQ$-category structure on a set $X$ if, and only if,
\begin{enumerate}[label={\rm(\arabic*)}]
\item \label{DQ-Cat:s}
    $\al(x,y)=\al(x,x)\with(\al(x,x)\ra\al(x,y))=\al(y,y)\with(\al(y,y)\ra\al(x,y))$, i.e., $\al(x,y)$ is a diagonal between $\al(x,x)$ and $\al(y,y)$,
\item \label{DQ-Cat:tt}
    $(\al(y,y)\ra\al(y,z))\with\al(x,y)=\al(y,z)\with(\al(y,y)\ra\al(x,y))\leq\al(x,z)$
\end{enumerate}
for all $x,y,z\in X$. In this case, one has
$$\al(x,y)\leq\al(x,x)\wedge\al(y,y)$$
for all $x,y\in X$.
\end{prop}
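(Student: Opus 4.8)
The plan is to obtain the statement by specializing the definition of $\CQ$-category to the case $\CQ=\DQ$ and then using integrality of $\sQ$ to show that the ``object map'' $|\text{-}|:X\to\sQ$ of a $\DQ$-category is forced to be $x\mapsto\al(x,x)$, hence carries no information beyond $\al$ itself.

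First I would unwind Proposition \ref{DQ-def}: a $\DQ$-category consists of a set $X$, a map $|\text{-}|:X\to\sQ$, and a family $\al(x,y)\in\DQ(|x|,|y|)$ such that $1_{|x|}\leq\al(x,x)$ and $\al(y,z)\diamond\al(x,y)\leq\al(x,z)$. Since the identity diagonal on $|x|$ is $|x|$ itself, the composition $\diamond$ is given by \eqref{diagonal-comp}, and membership in $\DQ(|x|,|y|)$ is spelled out by \eqref{diagonal-def}, this is exactly the three conditions \ref{DQ-Cat:d}--\ref{DQ-Cat:t} listed before the statement. The crucial step is then this: applying the inclusion \eqref{DQpq} with $p=q=|x|$ to the diagonal $\al(x,x):|x|\rqa|x|$ gives $\al(x,x)\leq|x|$, and \ref{DQ-Cat:r} gives $|x|\leq\al(x,x)$, so $|x|=\al(x,x)$ for every $x\in X$.

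Substituting $|x|=\al(x,x)$ and $|y|=\al(y,y)$ into \ref{DQ-Cat:d} and \ref{DQ-Cat:t} turns them into conditions \ref{DQ-Cat:s} and \ref{DQ-Cat:tt} of the proposition, while \ref{DQ-Cat:r} becomes the tautology $\al(x,x)\leq\al(x,x)$; this proves the ``only if'' direction. For the converse, given $\al:X\times X\to\sQ$ satisfying \ref{DQ-Cat:s} and \ref{DQ-Cat:tt}, I would set $|x|:=\al(x,x)$ and verify the three clauses: clause \ref{DQ-Cat:d} is precisely \ref{DQ-Cat:s} (its instance $y=x$, asserting that $\al(x,x)$ is a diagonal on itself, being automatic since $p\ra p=1$ in the integral quantale $\sQ$, hence not an extra constraint), clause \ref{DQ-Cat:r} holds with equality, and the composition clause \ref{DQ-Cat:t} is \ref{DQ-Cat:tt}. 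The final inequality $\al(x,y)\leq\al(x,x)\wedge\al(y,y)$ then follows at once from \eqref{DQpq} applied to $\al(x,y)\in\DQ(\al(x,x),\al(y,y))$.

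I do not expect a real obstacle here: the argument is a careful translation between two equivalent axiom systems. The only place a hypothesis is used essentially is the inclusion \eqref{DQpq}, which rests on integrality of $\sQ$ --- from $d=q\with(q\ra d)$ and $d=(p\ra d)\with p$ one reads off $d\leq q$ and $d\leq p$ exactly because $1$ is the top element. I would emphasize that it is precisely this step that makes $|x|=\al(x,x)$ unavoidable, so that Proposition \ref{integral-DQCat} should be seen as the integral-quantale reduction of the general notion of $\DQ$-category, in which the object map genuinely matters.
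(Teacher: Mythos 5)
Your proposal is correct and follows essentially the same route as the paper, which also specializes the definition of $\CQ$-categories to $\CQ=\DQ$ and uses the inclusion \eqref{DQpq} (derived from integrality) together with the unit condition $|x|\leq\al(x,x)$ to force $|x|=\al(x,x)$, after which the two axiom systems translate into one another and the final inequality $\al(x,y)\leq\al(x,x)\wedge\al(y,y)$ again comes from \eqref{DQpq}. No gaps.
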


If the quantale $\sQ$ is divisible, then every $d\leq p\wedge q$ is a diagonal between $p,q\in\sQ$, which in conjunction with \eqref{DQpq} forces
\begin{equation} \label{DQpq-div}
\DQ(p,q)=\,\da\!(p\wedge q)
\end{equation}
for all $p,q\in\sQ$; that is, $d:p\rqa q$ is a diagonal if, and only if, $d\leq p\wedge q$. In this case, one may further simplifies Proposition \ref{integral-DQCat} to the following:

\begin{prop} \label{divisible-DQCat} (See \cite{Pu2012}.)
If $\sQ$ is divisible, then a map $\al:X\times X\to\sQ$ defines a $\DQ$-category structure on a set $X$ if, and only if,
\begin{enumerate}[label={\rm(\arabic*)}]
\item \label{DQ-Cat:ss}
    $\al(x,y)\leq\al(x,x)\wedge\al(y,y)$, i.e., $\al(x,y)$ is a diagonal between $\al(x,x)$ and $\al(y,y)$,
\item \label{DQ-Cat:ttt}
    $(\al(y,y)\ra\al(y,z))\with\al(x,y)=\al(y,z)\with(\al(y,y)\ra\al(x,y))\leq\al(x,z)$
\end{enumerate}
for all $x,y,z\in X$.
\end{prop}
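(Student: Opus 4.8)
The plan is to read off this statement as an immediate specialization of Proposition \ref{integral-DQCat}, using the identity \eqref{DQpq-div} that was just recorded for divisible quantales. First I would note that divisibility of $\sQ$ forces integrality (as remarked just after \eqref{diagonal-divisible}), so Proposition \ref{integral-DQCat} applies verbatim: a map $\al:X\times X\to\sQ$ is a $\DQ$-category structure on $X$ if and only if conditions \ref{DQ-Cat:s} and \ref{DQ-Cat:tt} hold. Since \ref{DQ-Cat:tt} is literally \ref{DQ-Cat:ttt}, the entire task reduces to showing that, under divisibility, condition \ref{DQ-Cat:s} may be replaced by the simpler condition \ref{DQ-Cat:ss}.

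For that replacement I would invoke \eqref{DQpq-div}: since $\DQ(p,q)=\,\da\!(p\wedge q)$ for all $p,q\in\sQ$ when $\sQ$ is divisible, the element $\al(x,y)$ is a diagonal between $\al(x,x)$ and $\al(y,y)$ in the sense of \eqref{diagonal-def} if and only if $\al(x,y)\leq\al(x,x)\wedge\al(y,y)$, which is exactly condition \ref{DQ-Cat:ss}; note that the ``i.e.'' clause appearing in the statement of \ref{DQ-Cat:ss} is justified precisely by \eqref{DQpq-div} rather than by \eqref{diagonal-divisible}. Substituting \ref{DQ-Cat:ss} for \ref{DQ-Cat:s} in Proposition \ref{integral-DQCat} therefore yields the asserted characterization, and the concluding inequality $\al(x,y)\leq\al(x,x)\wedge\al(y,y)$ of Proposition \ref{integral-DQCat} is here subsumed by \ref{DQ-Cat:ss} itself.

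Because the argument is pure specialization, I do not anticipate a genuine obstacle; the only point requiring a moment's care is confirming that \eqref{DQpq-div} is fully available. This is the case: if $d\leq p\wedge q$, then $d\leq p$ and $d\leq q$, so $d$ is divisible by $p$ and by $q$, whence $d:p\rqa q$ is a diagonal by the characterization following \eqref{diagonal-divisible}; combined with the inclusion \eqref{DQpq} this gives $\DQ(p,q)=\,\da\!(p\wedge q)$. One could alternatively avoid citing Proposition \ref{integral-DQCat} altogether and rerun its proof with \ref{DQ-Cat:ss} substituted for \ref{DQ-Cat:d} throughout, but re-using the already-established proposition keeps the proof shortest.
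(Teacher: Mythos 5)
Your proposal is correct and follows essentially the same route as the paper: the text preceding Proposition \ref{divisible-DQCat} establishes \eqref{DQpq-div} from divisibility together with \eqref{DQpq}, and then presents the proposition exactly as the specialization of Proposition \ref{integral-DQCat} in which condition \ref{DQ-Cat:s} is replaced by \ref{DQ-Cat:ss}. Your added check that divisibility implies integrality (so Proposition \ref{integral-DQCat} applies) and your verification of \eqref{DQpq-div} via the characterization following \eqref{diagonal-divisible} match the paper's reasoning.
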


In order to exhibit partial metric spaces as $\DQ$-categories, let us now look at the well-known Lawvere's quantale (see \cite{Lawvere1973})
$$[0,\infty]_+=([0,\infty],+,0),$$
where $[0,\infty]$ is the extended non-negative real line equipped with the order ``$\geq$'' (so that $0$ becomes the top element and $\infty$ the bottom element), and ``$+$'' is the usual addition extended via
\begin{equation} \label{infty-add}
p+\infty=\infty+p=\infty
\end{equation}
to $[0,\infty]$. Note that the implication in $[0,\infty]_+$ is given by
$$p\ra q=\begin{cases}
0 & \text{if}\ p\geq q,\\
q-p & \text{if}\ p<q
\end{cases}$$
for all $p,q\in[0,\infty]$, where the subtraction ``$-$'' is extended via
\begin{equation} \label{infty-minus}
\infty-p=\begin{cases}
\infty & \text{if}\ p<\infty,\\
0 & \text{if}\ p=\infty
\end{cases}
\end{equation}
to $[0,\infty]$. Hence, it is easy to see that $[0,\infty]_+$ is a commutative and divisible quantale, and thus a diagonal $d:p\rqa q$ between non-negative real numbers $p,q\in[0,\infty]$ is precisely a real number $d\in[0,\infty]$ with $p\vee q\leq d$. Consequently, the quantaloid
$$\sD[0,\infty]_+$$
of diagonals in $[0,\infty]_+$ has $[0,\infty]$ as the object-set, and its hom-sets are given by
$$\sD[0,\infty]_+(p,q)=\{d\in[0,\infty]\mid p\vee q\leq d\}.$$
By Proposition \ref{divisible-DQCat}, a $\sD[0,\infty]_+$-category structure $\al:X\times X\to[0,\infty]$ on a set $X$ is exactly a partial metric on $X$ described by \ref{PM:d} and \ref{PM:t}. With non-expanding maps $f:(X,\al)\to(Y,\be)$ between partial metric spaces as morphisms, i.e., maps $f:X\to Y$ with
$$\al(x,x)=\be(fx,fx)\quad\text{and}\quad\be(fx,fy)\leq\al(x,y)$$
for all $x,y\in X$, one obtains the category
$$\ParMet:=\sD[0,\infty]_+\text{-}\Cat,$$
which contains the category
$$\Met:=[0,\infty]_+\text{-}\Cat$$
of metric spaces as a full subcategory.

\begin{rem}
In general, $\sQ$-categories are studied as \emph{$\sQ$-preordered sets} in the fuzzy community \cite{Bvelohlavek2004,Denniston2014,Hoehle2015,Lai2006,Lai2009,Shen2013}, while $\DQ$-categories are also referred to as \emph{$\sQ$-preordered $\sQ$-subsets} (or \emph{preordered fuzzy sets}) in the literatures \cite{GutierrezGarcia2017,Pu2012,Shen2016b,Shen2013b}. Therefore, metric spaces are a special kind of $\sQ$-preordered sets, while partial metric spaces can be regarded as examples of $\sQ$-preordered $\sQ$-subsets.
\end{rem}

\section{The quantale of distance distributions w.r.t. a continuous t-norm} \label{Delta}

Probabilistic metric spaces, as defined by \cite{Menger1942,Schweizer1983}, are a generalization of metric spaces in which the distance is defined as a \emph{distance distribution} rather than a non-negative real number, and it has been known in \cite{Chai2009,Clementino2017,Hofmann2013a} that probabilistic metric spaces can be considered as categories enriched in the quantale of distance distributions.

Explicitly, a map $\phi:[0,\infty]\to[0,1]$ is called a \emph{distance distribution} if
$$\phi(t)=\bv_{s<t}\phi(s)$$
for all $t\in[0,\infty]$. In other words, $\phi:[0,\infty]\to[0,1]$ is a distance distribution if, and only if,
\begin{enumerate}[label=(\arabic*)]
\item $\phi(0)=0$,
\item $\phi$ is monotone\footnote{The monotonicity here refers to the usual order ``$\leq$'' on $[0,\infty]$; that is, $s\leq t$ in $[0,\infty]$ implies $\phi(s)\leq\phi(t)$ in $[0,1]$.}, and
\item $\phi$ is left-continuous on $(0,\infty]$.
\end{enumerate}

The set $\De$ of all distance distributions becomes a complete lattice under the pointwise order given by
$$\phi\leq\psi\iff\forall t\in[0,\infty]:\ \phi(t)\leq\psi(t)$$
for all $\phi,\psi\in\De$. The following lemma introduces a canonical procedure of generating distance distributions:

\begin{lem} \label{monotone-dd}
For every map $\phi:[0,\infty]\to[0,1]$, the map
$$\phi^-:[0,\infty]\to[0,1],\quad\phi^-(t)=\bv_{s<t}\bw_{r>s}\phi(r)$$
is a distance distribution. In particular, if $\phi:[0,\infty]\to[0,1]$ is monotone, then
$$\phi^-(t)=\bv_{s<t}\phi(s)$$
for all $t\in[0,\infty]$, and $\phi^-$ is the largest distance distribution that does not exceed the monotone map $\phi$.
\end{lem}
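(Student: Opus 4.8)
The plan is to check, for an arbitrary map $\phi:[0,\infty]\to[0,1]$, the single equation $\phi^-(t)=\bv_{s<t}\phi^-(s)$ for every $t\in[0,\infty]$, since (as observed just before the lemma) this is equivalent to $\phi^-$ being a distance distribution. First I would dispose of $t=0$: the index set $\{s\mid s<0\}$ is empty, so both $\phi^-(0)$ and $\bv_{s<0}\phi^-(s)$ are empty joins, hence equal to $0$. For $t>0$ the inequality $\bv_{s<t}\phi^-(s)\leq\phi^-(t)$ is just monotonicity of $\phi^-$, which holds because $s_1\leq s_2$ gives $\{s\mid s<s_1\}\subseteq\{s\mid s<s_2\}$ and therefore $\phi^-(s_1)\leq\phi^-(s_2)$.

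For the reverse inequality I would use an interpolation trick. Fix $s<t$ and choose $u$ with $s<u<t$ (for instance $u=(s+t)/2$ if $t<\infty$, and $u=s+1$ if $t=\infty$). Then $s$ is an admissible index in the outer join defining $\phi^-(u)$, so $\bw_{r>s}\phi(r)\leq\phi^-(u)\leq\bv_{u'<t}\phi^-(u')$; taking the supremum over all $s<t$ yields $\phi^-(t)\leq\bv_{u'<t}\phi^-(u')$. This establishes $\phi^-\in\De$.

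For the ``in particular'' statement, assume $\phi$ monotone. Since $\phi(s)\leq\phi(r)$ for $s<r$ we get $\phi(s)\leq\bw_{r>s}\phi(r)$ for each $s$, hence $\bv_{s<t}\phi(s)\leq\phi^-(t)$; conversely, for $s<t$ pick $r$ with $s<r<t$ as above, so $\bw_{r'>s}\phi(r')\leq\phi(r)\leq\bv_{s'<t}\phi(s')$, and a supremum over $s<t$ gives $\phi^-(t)\leq\bv_{s<t}\phi(s)$ (with $t=0$ again the empty join). Thus $\phi^-(t)=\bv_{s<t}\phi(s)$. Maximality then follows formally: $\phi^-\leq\phi$ because $\phi(s)\leq\phi(t)$ for $s<t$ forces $\bv_{s<t}\phi(s)\leq\phi(t)$ (and $\phi^-(0)=0\leq\phi(0)$), while any $\psi\in\De$ with $\psi\leq\phi$ satisfies $\psi(t)=\bv_{s<t}\psi(s)\leq\bv_{s<t}\phi(s)=\phi^-(t)$, i.e.\ $\psi\leq\phi^-$.

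There is no serious obstacle here; the only points requiring care are the interpolation step (producing a point strictly between $s$ and $t$, in particular when $t=\infty$) and the bookkeeping of joins and meets over empty index sets, especially the identity $\phi^-(0)=0$.
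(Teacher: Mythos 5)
Your argument is correct and complete: checking the defining identity $\phi^-(t)=\bv_{s<t}\phi^-(s)$ directly, with the interpolation point $s<u<t$ (including $t=\infty$) and the empty-join case $t=0$, is exactly the routine verification that the paper omits, since Lemma \ref{monotone-dd} is stated there without proof. The ``in particular'' part (the simplified formula for monotone $\phi$, the bound $\phi^-\leq\phi$, and maximality among distance distributions below $\phi$) is likewise handled correctly.
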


It is often useful to consider a special family of distance distributions, called \emph{one-step functions}, defined by
$$\ka_{p,a}:[0,\infty]\to[0,1],\quad\ka_{p,a}(t)=\begin{cases}
0 & \text{if}\ t\leq p,\\
a & \text{if}\ t>p
\end{cases}$$
for $p\in[0,\infty)$, $a\in[0,1]$.\footnote{We exclude the case of $p=\infty$ in the definition of one-step functions to avoid inconsistency in future discussions; for example, Lemma \ref{De-calc}\ref{De-calc:monotone} may fail to be true if $p=\infty$. Indeed, even if $p=\infty$ is taken into consideration, it is easy to see that $\ka_{\infty,a}=\ka_{0,0}$ for all $a\in[0,1]$.} In particular, $\ka_{0,1}$ and $\ka_{0,0}$ are respectively the top and the bottom elements of $\De$. Moreover, distance distributions can always be written as suprema of one-step functions:

\begin{lem} \label{De-step-join}
For every $\phi\in\De$,
$$\phi=\bv_{a<\phi(p)}\ka_{p,a}=\bv_{p\in[0,\infty)}\ka_{p,\phi(p)}.$$
\end{lem}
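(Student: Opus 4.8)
The plan is to verify the two claimed equalities pointwise, using the left-continuity that defines distance distributions. First I would establish $\phi=\bv_{a<\phi(p)}\ka_{p,a}$. Fix $t\in[0,\infty]$. Since each $\ka_{p,a}$ with $a<\phi(p)$ satisfies $\ka_{p,a}(t)\leq a<\phi(p)\leq\phi(t)$ when $t>p$, and $\ka_{p,a}(t)=0$ when $t\leq p$, every term in the join is $\leq\phi(t)$, giving ``$\geq$'' in the pointwise order on $\De$ (recall the order on $\De$ is pointwise, so the larger map dominates). For the reverse inequality, I would exhibit, for each $t$ with $\phi(t)>0$, terms of the join whose values at $t$ approach $\phi(t)$: by left-continuity $\phi(t)=\bv_{s<t}\phi(s)$, so for any $b<\phi(t)$ there is $s<t$ with $\phi(s)>b$, hence there is $a$ with $b<a<\phi(s)$, and the one-step function $\ka_{s,a}$ (a legitimate term since $a<\phi(s)=\phi(p)$ with $p:=s$) takes value $a>b$ at $t$ because $t>s$. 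Letting $b\uparrow\phi(t)$ shows the join attains $\phi(t)$ at $t$. This yields equality.

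For the second equality $\phi=\bv_{p\in[0,\infty)}\ka_{p,\phi(p)}$, the ``$\geq$'' direction is immediate since $\ka_{p,\phi(p)}(t)\leq\phi(p)\leq\phi(t)$ for $t>p$ and $=0$ otherwise. For ``$\leq$'', fix $t$ and $b<\phi(t)$; by left-continuity pick $s<t$ with $\phi(s)>b$, and then $\ka_{s,\phi(s)}(t)=\phi(s)>b$ since $t>s$. Letting $b\uparrow\phi(t)$ gives $\bv_p\ka_{p,\phi(p)}(t)\geq\phi(t)$, so the joins agree. A small boundary point to note is that for $t=0$ both sides are $0$ (since $\phi(0)=0$ and every $\ka_{p,\phi(p)}(0)=0$), and for $t$ where $\phi(t)=0$ the argument degenerates harmlessly.

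The main thing to be careful about — rather than a genuine obstacle — is the interplay between the supremum being computed \emph{in the lattice $\De$} and the supremum being computed pointwise. One must check that the pointwise supremum of the indicated family of one-step functions is again a distance distribution (so that it coincides with the join in $\De$); but this is exactly what Lemma \ref{monotone-dd} guarantees, since a pointwise supremum of monotone maps is monotone, and the supremum of distance distributions in $\De$ is the left-continuous regularization $(\,\cdot\,)^-$ of the pointwise supremum — here the pointwise supremum is already left-continuous because it equals $\phi$. So once the pointwise equalities above are in hand, no separate argument about the lattice structure is needed, and the lemma follows.
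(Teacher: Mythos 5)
Your proof is correct, and since the paper states Lemma \ref{De-step-join} without proof (treating it as a routine consequence of left-continuity), your pointwise verification via the approximation $b<\phi(t)\implies\exists s<t,\ \phi(s)>b$ is exactly the intended argument. One remark: your closing worry is moot, because suprema commute, so the pointwise supremum of any family of distance distributions is automatically left-continuous (hence joins in $\De$ are always computed pointwise), and your two pointwise estimates already identify $\phi$ as the least upper bound without invoking Lemma \ref{monotone-dd} or any regularization.
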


Recall that the unit interval $[0,1]$ equipped with a continuous t-norm $*$ \cite{Klement2000,Klement2004} is a commutative and divisible quantale
$$[0,1]_*=([0,1],*,1).$$
It is well known \cite{Faucett1955,Klement2000,Klement2004a,Klement2004b,Mostert1957} that every continuous t-norm $*$ on $[0,1]$ can be written as an ordinal sum of three basic t-norms, i.e., the minimum, the product, and the {\L}ukasiewicz t-norm:
\begin{itemize}
\item (Minimum t-norm) $[0,1]_{\wedge}=([0,1],\wedge,1)$, in which $p\ra_{\wedge} q=\begin{cases}
    1 & \text{if}\ p\leq q\\
    q & \text{if}\ p>q
    \end{cases}$ for all $p,q\in[0,1]$.
\item (Product t-norm) $[0,1]_{\times}=([0,1],\times,1)$, where $\times$ is the usual multiplication on $[0,1]$, and $p\ra_{\times} q=\begin{cases}
    1 & \text{if}\ p\leq q\\
    \dfrac{q}{p} & \text{if}\ p>q
    \end{cases}$ for all $p,q\in[0,1]$. The quantales $[0,1]_{\times}$ and $[0,\infty]_+$ are clearly isomorphic.
\item ({\L}ukasiewicz t-norm) $[0,1]_{\opl}=([0,1],\opl,1)$, in which $p\opl q=\max\{0,p+q-1\}$, and $p\ra_{\opl} q=\min\{1,1-p+q\}$ for all $p,q\in[0,1]$.
\end{itemize}

The \emph{convolution} of monotone maps $\phi,\psi:[0,\infty]\to[0,1]$ w.r.t. a continuous t-norm $*$ is defined as a (necessarily monotone) map $\phi\otimes_*\psi:[0,\infty]\to[0,1]$ with
$$(\phi\otimes_*\psi)(t)=\bv_{r+s=t}\phi(r)*\psi(s).$$

\begin{rem} \label{monotone-convolution-rep}
It is easy to see that
\begin{equation} \label{monotone-convolution-finite}
(\phi\otimes_*\psi)(t)=\bv_{s\leq t}\phi(s)*\psi(t-s)=\bv_{s\leq t}\phi(t-s)*\psi(s)
\end{equation}
for all $t\in[0,\infty)$, but
$$(\phi\otimes_*\psi)(\infty)=\phi(\infty)*\psi(\infty)$$
may not satisfy Equation \eqref{monotone-convolution-finite} since $\infty-\infty=0$; see \eqref{infty-add} and \eqref{infty-minus} for the addition and subtraction involving $\infty$.
\end{rem}

\begin{lem} \label{convolution-dd}
For monotone maps $\phi,\psi:[0,\infty]\to[0,1]$, it holds that
\begin{equation} \label{monotone-convolution-dd}
\phi^-\otimes_*\psi^-=(\phi\otimes_*\psi)^-.
\end{equation}
In particular, if $\phi$ is a distance distribution, then
\begin{equation} \label{monotone-convolution-dd-finite}
(\phi\otimes_*\psi^-)(t)=(\phi\otimes_*\psi)(t)
\end{equation}
for all $t\in[0,\infty)$.
\end{lem}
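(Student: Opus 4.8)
The plan is to prove the identity \eqref{monotone-convolution-dd} \emph{pointwise}, by showing that for every $t\in[0,\infty]$ both sides equal $\bv_{r+u<t}\phi(r)*\psi(u)$, where for $t=\infty$ this supremum is understood to range over all $r,u<\infty$. I would first dispose of the case $t\in[0,\infty)$. Using the finite form of the convolution from Remark \ref{monotone-convolution-rep}, namely \eqref{monotone-convolution-finite}, together with the formula $\phi^-(v)=\bv_{r<v}\phi(r)$ of Lemma \ref{monotone-dd} (valid since $\phi$ is monotone) and the fact that $*$ preserves suprema in each variable (as $[0,1]_*$ is a quantale), one expands
$$(\phi^-\otimes_*\psi^-)(t)=\bv_{s\leq t}\phi^-(t-s)*\psi^-(s)=\bv_{s\leq t}\,\bv_{r<t-s}\,\bv_{u<s}\phi(r)*\psi(u).$$
The key bookkeeping point is that, as $s$ ranges over $[0,t]$, the index set on the right is exactly $\{(r,u)\mid r,u\geq0,\ r+u<t\}$: every such pair forces $r+u<(t-s)+s=t$, and conversely any pair with $r+u<t$ is captured by choosing $s$ strictly between $u$ and $t-r$, which is possible precisely because $u<t-r$. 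On the other side, $\phi\otimes_*\psi$ is monotone, so Lemma \ref{monotone-dd} gives $(\phi\otimes_*\psi)^-(t)=\bv_{w<t}(\phi\otimes_*\psi)(w)=\bv_{w<t}\bv_{r+u=w}\phi(r)*\psi(u)=\bv_{r+u<t}\phi(r)*\psi(u)$, so the two sides agree for $t<\infty$.

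For $t=\infty$ the convolution behaves differently, so I would instead invoke the second clause of Remark \ref{monotone-convolution-rep}: $(\phi^-\otimes_*\psi^-)(\infty)=\phi^-(\infty)*\psi^-(\infty)$. Writing $\phi^-(\infty)=\bv_{r<\infty}\phi(r)$ and $\psi^-(\infty)=\bv_{u<\infty}\psi(u)$ and distributing $*$ over these suprema rewrites this as $\bv_{r,u<\infty}\phi(r)*\psi(u)=\bv_{r+u<\infty}\phi(r)*\psi(u)$, which is exactly the value of $(\phi\otimes_*\psi)^-(\infty)$ computed as in the previous paragraph. Hence \eqref{monotone-convolution-dd} holds at every point.

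For the ``in particular'' clause, observe that a distance distribution $\phi$ satisfies $\phi^-=\phi$ (it is a distance distribution below itself, and $\phi^-$ is the largest such), so $\phi\otimes_*\psi^-=\phi^-\otimes_*\psi^-=(\phi\otimes_*\psi)^-$ by the part just proved; it then suffices to check $(\phi\otimes_*\psi)^-(t)=(\phi\otimes_*\psi)(t)$ for $t<\infty$. From the formulas above this is the equality $\bv_{r+u<t}\phi(r)*\psi(u)=\bv_{r+u=t}\phi(r)*\psi(u)$. The inequality ``$\leq$'' is immediate since $\phi\otimes_*\psi$ is monotone; for ``$\geq$'' I would fix $r+u=t$ and use left-continuity of $\phi$ to write $\phi(r)=\bv_{r'<r}\phi(r')$ (the case $r=0$ being trivial, as then $\phi(r)*\psi(u)=0$), whence $\phi(r)*\psi(u)=\bv_{r'<r}\phi(r')*\psi(u)$, each summand having $r'+u<t$.

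The only genuinely delicate steps are the commuting of suprema in the first paragraph — in particular verifying that $\bigcup_{s\in[0,t]}\{(r,u):r<t-s,\ u<s\}$ is all of $\{(r,u):r+u<t\}$ — and the separate handling of $t=\infty$, where the convolution is given by the product of the values at $\infty$ rather than by \eqref{monotone-convolution-finite}; overlooking this endpoint is the main pitfall. Everything else is routine manipulation of monotone, left-continuous maps and the join-preservation of the t-norm.
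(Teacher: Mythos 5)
Your proof is correct and takes essentially the same route as the paper's: both sides are identified with the supremum $\bv_{r+u<t}\phi(r)*\psi(u)$ via the same index-shifting device (your choice of $s$ strictly between $u$ and $t-r$ plays exactly the role of the paper's midpoint $q=r+\frac{t-s}{2}$), and the endpoint $t=\infty$ is handled by the same separate computation using $(\phi^-\otimes_*\psi^-)(\infty)=\phi^-(\infty)*\psi^-(\infty)$. Your treatment of the ``in particular'' clause likewise matches the paper's argument, merely invoking $\phi=\phi^-$ and \eqref{monotone-convolution-dd} at the start rather than at the end, with the same use of $\phi(0)=0$ and left-continuity of $\phi$.
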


\begin{proof}
It is obvious that $(\phi^-\otimes_*\psi^-)(0)=(\phi\otimes_*\psi)^-(0)=0$, and
$$(\phi^-\otimes_*\psi^-)(\infty)=\bv_{r<\infty}\phi(r)*\bv_{s<\infty}\psi(s)=\bv_{r+s<\infty}\phi(r)*\psi(s)=\bv_{t<\infty}\bv_{r+s=t}\phi(r)*\psi(s)
=\bv_{t<\infty}(\phi\otimes_*\psi)(t)=(\phi\otimes_*\psi)^-(\infty).$$
It remains to prove the case of $t\in(0,\infty)$. Since $\phi^-\otimes_*\psi^-\leq\phi\otimes_*\psi$, from Lemma \ref{monotone-dd} one immediately obtains $\phi^-\otimes_*\psi^-\leq(\phi\otimes_*\psi)^-$. For the reverse inequality, one needs to show that
$$\bv_{s<t}\bv_{r\leq s}\phi(r)*\psi(s-r)\leq\bv_{p\leq t}\phi^-(p)*\psi^-(t-p)$$
by Equation \eqref{monotone-convolution-finite}. Indeed, whenever $r\leq s<t<\infty$, let $q=r+\dfrac{t-s}{2}$, then $r<q<t$ and $s-r<t-q$. It follows that
$$\phi(r)*\psi(s-r)\leq\Big(\bv_{p<q}\phi(p)\Big)*\Big(\bv_{p<t-q}\psi(p)\Big)=\phi^-(q)*\psi^-(t-q)\leq\bv_{p\leq t}\phi^-(p)*\psi^-(t-p).$$
In particular, if $\phi\in\De$, then $(\phi\otimes_*\psi^-)(0)=(\phi\otimes_*\psi)(0)=0$, and
\begin{align*}
(\phi\otimes_*\psi)(t)&=\bv_{s\leq t}\phi(t-s)*\psi(s)&(\text{Equation \eqref{monotone-convolution-finite}})\\
&=\bv_{s<t}\phi(t-s)*\psi(s)&(\phi\in\De\implies\phi(0)=0)\\
&=\bv_{s<r<t}\phi(r-s)*\psi(s)&(\phi\in\De\implies\phi\ \text{is left-continuous})\\
&=\bv_{r<t}(\phi\otimes_*\psi)(r)\\
&=(\phi\otimes_*\psi)^-(t)\\
&=(\phi\otimes_*\psi^-)(t)&(\text{Equation \eqref{monotone-convolution-dd}})
\end{align*}
for all $t\in(0,\infty)$.
\end{proof}

\begin{rem}
As pointed out by an anonymous referee, Equation \eqref{monotone-convolution-dd-finite} in Lemma \ref{convolution-dd} may not be true for $t=\infty$. For example, let $\phi=\ka_{0,1}$, and let $\psi:[0,\infty]\to[0,1]$ be the monotone map given by
$$\psi(t)=\begin{cases}
0 & \text{if}\ t\in[0,\infty),\\
1 & \text{if}\ t=\infty.
\end{cases}$$
Then $(\phi\otimes_*\psi^-)(\infty)=(\phi\otimes_*\ka_{0,0})(\infty)=\ka_{0,0}(\infty)=0$, but $(\phi\otimes_*\psi)(\infty)=\phi(\infty)*\psi(\infty)=1$.
\end{rem}

If $\phi,\psi\in\De$, it is clear that
\begin{align*}
(\phi\otimes_*\psi)(t)&=\bv_{r+s=t}\phi(r)*\psi(s)=\bv_{s\leq t}\phi(s)*\psi(t-s)=\bv_{s\leq t}\phi(t-s)*\psi(s)\\
&=\bv_{r+s<t}\phi(r)*\psi(s)=\bv_{s<t}\phi(s)*\psi(t-s)=\bv_{s<t}\phi(t-s)*\psi(s)
\end{align*}
for all $t\in[0,\infty]$, and thus $\phi\otimes_*\psi\in\De$. With $\ka_{0,1}$ being the neutral element for $\otimes_*$ one actually defines a quantale structure on $\De$:

\begin{prop} \label{De-quantale}
$\De_*=(\De,\otimes_*,\ka_{0,1})$ is a commutative and integral quantale.
\end{prop}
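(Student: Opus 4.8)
The plan is a straightforward verification of the quantale axioms, resting on two facts already in hand: that $\phi\otimes_*\psi\in\De$ whenever $\phi,\psi\in\De$, and that the supremum in the complete lattice $\De$ is computed pointwise. \emph{Commutativity} of $\otimes_*$ I would read off immediately from the commutativity of the t-norm $*$ and the symmetry of $(\phi\otimes_*\psi)(t)=\bv_{r+s=t}\phi(r)*\psi(s)$ in $\phi$ and $\psi$. For \emph{preservation of suprema}, since joins in $\De$ are pointwise and $*$ distributes over arbitrary joins in $[0,1]$ (as $[0,1]_*$ is a quantale), I would compute, for a family $(\phi_i)_{i\in I}$ in $\De$, $\psi\in\De$ and $t\in[0,\infty]$,
$$\Big(\Big(\bv_{i\in I}\phi_i\Big)\otimes_*\psi\Big)(t)=\bv_{r+s=t}\Big(\bv_{i\in I}\phi_i(r)\Big)*\psi(s)=\bv_{i\in I}\bv_{r+s=t}\phi_i(r)*\psi(s)=\bv_{i\in I}(\phi_i\otimes_*\psi)(t),$$
which is just a reshuffling of suprema and involves no arithmetic of $\infty$; preservation in the second variable then follows by commutativity. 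For the \emph{unit law} it suffices, again by commutativity, to show $\phi\otimes_*\ka_{0,1}=\phi$. This is trivial at $t=0$; for $t\in(0,\infty)$, since $\ka_{0,1}(s)=1$ for $s>0$ and $\ka_{0,1}(0)=0$, only the summands with $s>0$ matter and $(\phi\otimes_*\ka_{0,1})(t)=\bv_{0<s\le t}\phi(t-s)=\bv_{r<t}\phi(r)=\phi(t)$ by left-continuity of $\phi$; for $t=\infty$ the same computation, carried out with the representation $(\phi\otimes_*\ka_{0,1})(\infty)=\bv_{r+s<\infty}\phi(r)*\ka_{0,1}(s)$ available for convolutions of elements of $\De$, gives $\bv_{r<\infty}\phi(r)=\phi(\infty)$. \emph{Integrality} is then the already-recorded fact that $\ka_{0,1}$ is the top element of $\De$.

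The substantive point is \emph{associativity}. For $\phi,\psi,\chi\in\De$ and $t\in[0,\infty)$, I would expand both triple convolutions and use that $*$ is associative and distributes over suprema:
$$\big((\phi\otimes_*\psi)\otimes_*\chi\big)(t)=\bv_{u+v=t}\,\bv_{r+s=u}\phi(r)*\psi(s)*\chi(v)=\bv_{r+s+v=t}\phi(r)*\psi(s)*\chi(v),$$
and the last expression is patently independent of how the three factors were bracketed, so it equals $\big(\phi\otimes_*(\psi\otimes_*\chi)\big)(t)$ as well; for $t<\infty$ every index $r,s,u,v$ occurring is automatically finite, so the collapse of the iterated join into a single join over $\{(r,s,v):r+s+v=t\}$ is unambiguous. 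The only delicate case is $t=\infty$, where the identity $\bv_{u+v=\infty}\bv_{r+s=u}=\bv_{r+s+v=\infty}$ can fail because $\infty+s=\infty$ and $\infty-\infty=0$. There I would instead use, for any convolution $\xi=\eta\otimes_*\zeta$ with $\eta,\zeta\in\De$, the representation $\xi(\infty)=\bv_{a+b<\infty}\eta(a)*\zeta(b)$ established just before the statement; applying this twice confines all indices to $[0,\infty)$ and reduces the value at $\infty$ to the same finite-index manipulation,
$$\big((\phi\otimes_*\psi)\otimes_*\chi\big)(\infty)=\bv_{r+s+v<\infty}\phi(r)*\psi(s)*\chi(v)=\big(\phi\otimes_*(\psi\otimes_*\chi)\big)(\infty).$$

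The main obstacle, then, is not conceptual but entirely the bookkeeping at the point $\infty$: because addition on $[0,\infty]$ is not cancellative there, the convenient reassociation of index sets is valid only after all equalities $a+b=\infty$ in the indices are replaced by the strict inequalities $a+b<\infty$, which is legitimate precisely because each convolved function lies in $\De$ and hence satisfies $\xi(\infty)=\bv_{c<\infty}\xi(c)$. Everything away from $\infty$, together with the commutativity, unit and sup-preservation axioms, is routine.
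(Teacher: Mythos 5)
Your proof is correct and amounts to exactly the routine verification that the paper leaves implicit: Proposition~\ref{De-quantale} is stated without a proof, being justified by the display immediately preceding it (that $\phi\otimes_*\psi\in\De$ and that for $\phi,\psi\in\De$ the convolution may equally be computed as $\bv_{r+s<t}\phi(r)*\psi(s)$ for all $t\in[0,\infty]$), which is precisely the representation you invoke to handle the index bookkeeping at $t=\infty$. Incidentally, associativity at $t=\infty$ is even more direct than your treatment suggests, since $(\eta\otimes_*\zeta)(\infty)=\eta(\infty)*\zeta(\infty)$ makes both bracketings equal $\phi(\infty)*\psi(\infty)*\chi(\infty)$ outright.
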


%\begin{proof}
%$\ka_{0,1}$ is obviously the top element of $\De$, making $\De_*=(\De,\otimes_*,\ka_{0,1})$ a commutative and integral quantale.
%\end{proof}

\begin{rem}
As revealed by \cite{GutierrezGarcia2017a}, the quantale $\De_*$ of distance distributions is the \emph{tensor product} of Lawvere's quantale $[0,\infty]_+$ and the quantale $[0,1]_*$ of continuous t-norm $*$ on $[0,1]$ in the category $\Sup$ of complete lattices and $\sup$-preserving maps.
\end{rem}

A \emph{(generalized) probabilistic metric space} \cite{Chai2009,Hofmann2013a,Hofmann2014} w.r.t. a continuous t-norm $*$ or, equivalently, a $\De_*$-category, is then defined as a set $X$ equipped with a map
$$\al:X\times X\to\De,$$
called the \emph{probabilistic distance function}, subject to the following conditions, for all $x,y,z\in X$ and $r,s\in[0,\infty]$:
\begin{enumerate}[label=(ProbM\arabic*),leftmargin=4.6em]
\item \label{ProbM:r}
    $\al(x,x)(t)=1$ for all $t>0$; that is, $\al(x,x)=\ka_{0,1}$.
\item \label{ProbM:t}
    $\al(y,z)(r)*\al(x,y)(s)\leq\al(x,z)(r+s)$; that is, $\al(y,z)\otimes_*\al(x,y)\leq\al(x,z)$.
\end{enumerate}
With $\De_*$-functors $f:(X,\al)\to(Y,\be)$ as morphisms, i.e., maps $f:X\to Y$ with
$$\al(x,y)(t)\leq\be(fx,fy)(t)$$
for all $x,y\in X$, $t\in[0,\infty]$, we obtain the category
$$\ProbMet_*:=\De_*\text{-}\Cat.$$

\begin{rem} \label{ProbMet-classical}
Similarly as we remarked in \ref{ParMet-classical}, a probabilistic metric space is classically defined as a pair $(X,\al)$ satisfying \ref{ProbM:r}--\ref{ProbM:t} and, additionally, the following conditions \cite{Menger1942,Schweizer1983} for all $x,y\in X$:
\begin{enumerate}[label=(ProbM\arabic*),start=3,leftmargin=4.6em]
\item \label{ProbM:sym}
    (symmetry) $\al(x,y)=\al(y,x)$;
\item \label{ProbM:f}
    (finiteness) $\al(x,y)(\infty)=1$;
\item \label{ProbM:sep}
    (separatedness) $\al(x,y)(t)=\al(y,x)(t)=1$ for all $t>0$ implies $x=y$; that is, $\al(x,y)=\al(y,x)=\ka_{0,1}$ implies $x=y$.
\end{enumerate}
\end{rem}

\section{Diagonals between distance distributions: Probabilistic partial metric spaces as enriched categories} \label{DDelta}

It is now natural to define the partial version of probabilistic metric spaces, i.e., probabilistic partial metric spaces, as categories enriched in the quantaloid $\sD\De_*$ of ``diagonals of $\De_*$''. However, the following fact indicates that the structure of $\sD\De_*$ would be far more complicated than what was described by \eqref{DQpq-div}:

\begin{prop} \label{De-non-div}
The quantale $\De_*=(\De,\otimes_*,\ka_{0,1})$ is non-divisible.
\end{prop}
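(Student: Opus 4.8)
The plan is to prove non-divisibility by exhibiting a concrete pair $\psi\leq\phi$ in $\De$ for which $\psi$ is \emph{not} divisible by $\phi$, i.e.\ there is no $\chi\in\De$ with $\phi\otimes_*\chi=\psi$; by the equivalence \eqref{diagonal-divisible} this is exactly the failure of $\phi\otimes_*(\phi\ra\psi)=\psi$, so one such pair suffices. The candidate is
$$\phi=\ka_{0,1/2}\vee\ka_{1,1},\qquad \psi=\ka_{2,1}.$$
Both lie in $\De$, since $\De$ is a complete lattice containing all one-step functions, and $\psi\leq\phi$ because $\psi(t)=0$ for $t\leq 2$ whereas $\phi(t)=1$ for every $t>1$. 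Explicitly, $\phi(t)=\tfrac12$ for $0<t\leq 1$ and $\phi(t)=1$ for $t>1$.

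First I would compute $\phi\otimes_*\chi$ for an arbitrary $\chi\in\De$. Starting from $(\phi\otimes_*\chi)(t)=\bv_{s\leq t}\phi(t-s)*\chi(s)$ for $t<\infty$ (Remark \ref{monotone-convolution-rep}), splitting the range $s\in[0,t]$ according to whether $t-s$ equals $0$, lies in $(0,1]$, or lies in $(1,\infty)$, and using that $*$ preserves suprema together with the monotonicity and left-continuity of $\chi$ (so that $\bv_{s<t-1}\chi(s)=\chi(t-1)$ and $\bv_{t-1\leq s<t}\chi(s)=\chi(t)$ when $t>1$), I expect to obtain
$$(\phi\otimes_*\chi)(t)=\begin{cases}
0 & t=0,\\
\tfrac12*\chi(t) & 0<t\leq 1,\\
\chi(t-1)\vee\big(\tfrac12*\chi(t)\big) & 1<t<\infty,\\
\chi(\infty) & t=\infty,
\end{cases}$$
the last line because $(\phi\otimes_*\chi)(\infty)=\phi(\infty)*\chi(\infty)=\chi(\infty)$.

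Now suppose, towards a contradiction, that $\phi\otimes_*\chi=\psi$ for some $\chi\in\De$, and test the identity at two points. At $t=\tfrac52$: since $\psi(\tfrac52)=1$ and $\tfrac12*\chi(\tfrac52)\leq\tfrac12*1=\tfrac12<1$ by integrality of $*$, the equation $\chi(\tfrac32)\vee\big(\tfrac12*\chi(\tfrac52)\big)=1$ forces $\chi(\tfrac32)=1$. At $t=\tfrac32$: since $\psi(\tfrac32)=0$, the equation $\chi(\tfrac12)\vee\big(\tfrac12*\chi(\tfrac32)\big)=0$ forces $\tfrac12*\chi(\tfrac32)=0$; but $\chi(\tfrac32)=1$ gives $\tfrac12*\chi(\tfrac32)=\tfrac12*1=\tfrac12\neq 0$, a contradiction. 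Hence no such $\chi$ exists, so $\psi$ is not divisible by $\phi$, and $\De_*$ is non-divisible.

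The only delicate point is the convolution computation of the second step, namely evaluating the suprema over $s$ near the jump of $\phi$ at $t=1$, which is precisely where the left-continuity of $\chi$ is used; the value at $t=\infty$ is harmless, being forced to $\chi(\infty)=1$ with no further constraint. It is worth stressing that the argument is uniform in the continuous t-norm $*$: beyond the general properties of $\otimes_*$ already established, it uses only that $1$ is simultaneously the unit and the top element of $[0,1]_*$, so that $\tfrac12*1=\tfrac12$ while $\tfrac12*x\leq\tfrac12<1$ for all $x\in[0,1]$; any fixed height $a\in(0,1)$ in place of $\tfrac12$ works identically.
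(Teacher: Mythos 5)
Your proof is correct and follows the same strategy as the paper's: exhibit an explicit $\xi\leq\phi$ in $\De$ and show that no $\chi$ with $\phi\otimes_*\chi=\xi$ can exist, uniformly in the continuous t-norm, by first forcing $\chi$ to equal $1$ at some point and then contradicting $\xi=0$ at a smaller argument where $\phi>0$. The only difference is the witness: the paper takes the ramp $\phi(t)=t$ on $[0,1]$ (constant $1$ beyond) and so needs an $\varepsilon$-approximation step to force $\psi\big(\tfrac{2}{3}\big)=1$, whereas your two-step $\phi=\ka_{0,1/2}\vee\ka_{1,1}$ admits an exact closed-form convolution, which slightly streamlines the argument; your computation, including the left-continuity uses at the jump and the value at $\infty$, checks out.
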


\begin{proof}
Let $\phi,\xi\in\De$ be given by
$$\phi(t):=\begin{cases}
t & \text{if}\ 0\leq t\leq 1,\\
1 & \text{if}\ t>1
\end{cases}\quad\text{and}\quad
\xi(t):=\begin{cases}
0 & \text{if}\ 0\leq t\leq 1,\\
1 & \text{if}\ t>1.
\end{cases}$$
Then $\xi<\phi$, but there is no $\psi\in\De$ with $\xi=\phi\otimes_*\psi$. Indeed, if such $\psi$ exists, then
$$\xi\Big(\dfrac{4}{3}\Big)=\bv_{s<\frac{4}{3}}\phi\Big(\dfrac{4}{3}-s\Big)*\psi(s)=1.$$
Hence, for each $p\in\Big(0,\dfrac{1}{3}\Big)$ one may find $s_p<\dfrac{4}{3}$ such that
\begin{align*}
\phi\Big(\dfrac{4}{3}-s_p\Big)*\psi(s_p)>1-p&\implies\dfrac{4}{3}-s_p\geq\phi\Big(\dfrac{4}{3}-s_p\Big)>1-p\quad\text{and}\quad\psi(s_p)>1-p\\
&\implies s_p<\dfrac{1}{3}+p<\dfrac{2}{3}\quad\text{and}\quad\psi(s_p)>1-p,
\end{align*}
and consequently
$$\psi\Big(\dfrac{2}{3}\Big)\geq\psi(s_p)>1-p$$
for all $p\in\Big(0,\dfrac{1}{3}\Big)$, which forces $\psi\Big(\dfrac{2}{3}\Big)=1$. It follows that
$$0=\xi(1)=\bv_{s<1}\phi(1-s)*\psi(s)\geq\phi\Big(\dfrac{1}{3}\Big)*\psi\Big(\dfrac{2}{3}\Big)=\dfrac{1}{3}*1=\dfrac{1}{3},$$
giving a contradiction.
\end{proof}

Therefore, although we know from \eqref{DQpq} and the integrality of the quantale $\De_*$ that $\xi:\phi\rqa_*\psi$ is a diagonal of $\De_*$ only if
\begin{equation} \label{xi-leq-phi-psi}
\xi\leq\phi\wedge\psi,
\end{equation}
it requires more efforts to determine which distance distributions below $\phi\wedge\psi$ are actually diagonals between $\phi$ and $\psi$. To achieve this, let us first describe implications in the quantale $\De_*$. As a preparation we list here some rules that are needed in the calculations later on:

\begin{lem} \phantomsection \label{De-calc}
\begin{enumerate}[label={\rm(\arabic*)}]
\item \label{De-calc:step-comp} (See \cite{Hofmann2013a} for the case of $*=\times$)
    $\ka_{p,a}\otimes_*\ka_{q,b}=\ka_{p+q,a*b}$ for all $p,q\in[0,\infty)$, $a,b\in[0,1]$.
\item \label{De-calc:monotone}
    If $\phi:[0,\infty]\to[0,1]$ is a monotone map, then
    $$a\leq\bw_{r>0}\phi(p+r)\iff\ka_{p,a}\leq\phi^-$$
    for all $p\in[0,\infty)$, $a\in[0,1]$.
\item \label{De-calc:imp}
    Let $\phi,\xi,\theta\in\De$. Then $\phi\otimes_*\psi\leq\xi\iff\psi\leq\theta$ for all $\psi\in\De$ if, and only if,
    $$\phi\otimes_*\ka_{p,a}\leq\xi\iff\ka_{p,a}\leq\theta$$
    for all $p\in[0,\infty)$, $a\in[0,1]$.
\end{enumerate}
\end{lem}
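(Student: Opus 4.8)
The plan is to dispatch the three items one at a time; each reduces to a short computation once the pointwise formulas for the convolution and for the operator $(-)^-$ are in hand.

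For \ref{De-calc:step-comp} I would evaluate $(\ka_{p,a}\otimes_*\ka_{q,b})(t)=\bv_{r+s=t}\ka_{p,a}(r)*\ka_{q,b}(s)$ pointwise, using that one-step functions lie in $\De$ so that this representation is valid for every $t\in[0,\infty]$ (with the convention $\infty+\infty=\infty$). If $t\leq p+q$, then $r+s=t$ forces $r\leq p$ or $s\leq q$, so each summand is $0$ because $0$ is absorbing for $*$; if $t>p+q$ one splits $t=r+s$ with $r>p$ and $s>q$ (for $t=\infty$ take $r=s=\infty$), obtaining the value $a*b$, which is simultaneously an upper bound since $\ka_{p,a}\leq a$, $\ka_{q,b}\leq b$ and $*$ is monotone. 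This is precisely $\ka_{p+q,a*b}$.

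For \ref{De-calc:monotone} observe that $\ka_{p,a}\leq\phi^-$ holds if and only if $a\leq\phi^-(t)$ for every $t>p$ (the inequality being automatic for $t\leq p$), that is, if and only if $a\leq\bw_{t>p}\phi^-(t)$; so it suffices to prove $\bw_{t>p}\phi^-(t)=\bw_{r>p}\phi(r)$. Since $\phi$ is monotone, Lemma \ref{monotone-dd} gives $\phi^-(t)=\bv_{s<t}\phi(s)$, and for $t>p$ one has on the one hand $\phi^-(t)\leq\phi(t)$ by monotonicity, and on the other hand $\phi^-(t)\geq\phi(s)\geq\bw_{r>p}\phi(r)$ for any $s\in(p,t)$; taking the infimum over $t>p$ squeezes both ends to $\bw_{r>p}\phi(r)$.

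For \ref{De-calc:imp} the ``only if'' direction is immediate, since every $\ka_{p,a}$ belongs to $\De$. For the converse, fix $\psi\in\De$ and write $\psi=\bv_{p\in[0,\infty)}\ka_{p,\psi(p)}$ by Lemma \ref{De-step-join}. Then $\psi\leq\theta$ if and only if $\ka_{p,\psi(p)}\leq\theta$ for all $p$, whereas, using that $\otimes_*$ preserves suprema in the quantale $\De_*$ (Proposition \ref{De-quantale}), $\phi\otimes_*\psi=\bv_{p}(\phi\otimes_*\ka_{p,\psi(p)})$, so that $\phi\otimes_*\psi\leq\xi$ if and only if $\phi\otimes_*\ka_{p,\psi(p)}\leq\xi$ for all $p$. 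The hypothesis makes these two families of conditions equivalent term by term, hence the two global statements coincide. The whole proof is routine; the only point needing care is the value at $t=\infty$ in \ref{De-calc:step-comp}, where the finite-sum reformulation of the convolution is unavailable and one must argue directly with $\bv_{r+s=\infty}$.
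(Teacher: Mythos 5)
Your proposal is correct and follows essentially the same route as the paper: item \ref{De-calc:step-comp} by direct pointwise evaluation of the convolution (with the $t=\infty$ case handled separately), item \ref{De-calc:monotone} by reducing $\ka_{p,a}\leq\phi^-$ to an infimum condition, and item \ref{De-calc:imp} via Lemma \ref{De-step-join} together with sup-preservation of $\phi\otimes_*-$. The only cosmetic difference is in \ref{De-calc:monotone}, where the paper invokes the maximality of $\phi^-$ among distance distributions below $\phi$ (Lemma \ref{monotone-dd}) to pass from $\ka_{p,a}\leq\phi$ to $\ka_{p,a}\leq\phi^-$, whereas you compute $\bw_{t>p}\phi^-(t)=\bw_{r>p}\phi(r)$ directly by a squeeze; both are valid.
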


\begin{proof}
\ref{De-calc:step-comp} Follows immediately from the definition of one-step functions and convolutions.

\ref{De-calc:monotone} From the definition of one-step functions one soon has $a\leq\dbw_{r>0}\phi(p+r)\iff\ka_{p,a}\leq\phi$, and $\ka_{p,a}\leq\phi\iff\ka_{p,a}\leq\phi^-$ since $\phi^-$ is the largest distance distribution less than or equal to $\phi$ (see Lemma \ref{monotone-dd}).

\ref{De-calc:imp} Since $\phi\otimes_* -$ preserves suprema, one derives the non-trivial direction immediately by Lemma \ref{De-step-join}.
\end{proof}

Let $\ra_*$ denote the implication in the quantale $[0,1]_*$. We start by investigating implications of one-step functions in the quantale $\De_*$:

\begin{prop} \label{step-implication} (See \cite{Hofmann2013a} for the case of $*=\times$)
For any $\xi\in\De$ and $p\in[0,\infty)$, $a\in[0,1]$, the implication $\ka_{p,a}\Ra_*\xi$ in the quantale $\De_*$ is given by
$$(\ka_{p,a}\Ra_*\xi)(t)=\bv_{s<t}a\ra_*\xi(p+s).$$
In particular,
$$\ka_{p,a}\Ra_*\ka_{r,c}=\ka_{\max\{0,r-p\},\,a\ra_* c}$$
for all $r\in[0,\infty)$, $c\in[0,1]$.
\end{prop}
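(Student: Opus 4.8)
The goal is to compute the implication $\ka_{p,a}\Ra_*\xi$ in $\De_*$, i.e., the largest distance distribution $\theta$ with $\ka_{p,a}\otimes_*\theta\leq\xi$. By Lemma~\ref{De-calc}\ref{De-calc:imp} it suffices to test against one-step functions: $\theta$ is characterized by the biconditionals $\ka_{p,a}\otimes_*\ka_{q,b}\leq\xi\iff\ka_{q,b}\leq\theta$ for all $q\in[0,\infty)$, $b\in[0,1]$. So the plan is to (i) rewrite the left-hand condition using Lemma~\ref{De-calc}\ref{De-calc:step-comp}, which gives $\ka_{p,a}\otimes_*\ka_{q,b}=\ka_{p+q,\,a*b}$; (ii) translate ``$\ka_{p+q,\,a*b}\leq\xi$'' into a pointwise inequality via Lemma~\ref{De-calc}\ref{De-calc:monotone}, namely $a*b\leq\bw_{r>0}\xi(p+q+r)$; (iii) solve for $b$ using the adjunction $a*b\leq c\iff b\leq a\ra_* c$ in $[0,1]_*$; and (iv) read off from the resulting family of conditions on $b$ (indexed by $q$) the explicit formula for $\theta$, then verify $\theta\in\De$.

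Carrying this out: by steps (i)--(iii), $\ka_{p,a}\otimes_*\ka_{q,b}\leq\xi$ holds iff $b\leq a\ra_*\bw_{r>0}\xi(p+q+r)$, and since $\ra_*$ preserves meets in its second argument this equals $\bw_{r>0}(a\ra_*\xi(p+q+r))$. Now I claim the map $t\mapsto\bv_{s<t}(a\ra_*\xi(p+s))$ is exactly $\theta$. One direction: if $\ka_{q,b}\leq\theta$, then $b\leq\bw_{r>0}\theta(q+r)=\bw_{r>0}\bv_{s<q+r}(a\ra_*\xi(p+s))$; for each $r>0$ one has $q<q+r$, so this supremum is $\geq a\ra_*\xi(p+q)\geq\bw_{r'>0}(a\ra_*\xi(p+q+r'))$, but more carefully, using left-continuity of $\xi$ on $(0,\infty]$ and monotonicity of $a\ra_*(-)$ applied to $\xi$, the value $\bw_{r>0}\theta(q+r)$ collapses to $\bw_{r>0}(a\ra_*\xi(p+q+r))$, giving the needed inequality on $b$. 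Conversely, if $b\leq\bw_{r>0}(a\ra_*\xi(p+q+r))$, then for every $s$ with $q<s$ one can interpolate $q<q+r<s$ for suitable small $r$ and conclude $b\leq a\ra_*\xi(p+s)$ by monotonicity, hence $b\leq\bv_{s'<s}(a\ra_*\xi(p+s'))=\theta(s)$ for all $s>q$, which is $\ka_{q,b}\leq\theta$. That $\theta$ is a distance distribution is immediate since it has the shape $\psi^-$ for the monotone map $\psi(t)=a\ra_*\xi(p+t)$ (apply Lemma~\ref{monotone-dd}; note $\psi(0)=a\ra_*\xi(p)$ need not be $0$, but $\theta(0)=\bv_{s<0}(\cdots)=0$ automatically, and left-continuity plus monotonicity come for free).

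For the ``in particular'' clause, substitute $\xi=\ka_{r,c}$: then $a\ra_*\ka_{r,c}(p+s)$ equals $a\ra_* 0=0\ra_*$-value when $p+s\leq r$ (i.e.\ $a\ra_* 0$, which for an integral t-norm is $0$ unless $a=0$, but in all cases $\ka_{r,c}(p+s)=0$ there so $a\ra_*0$ is the residual — one should just observe $a\ra_* 0 = 0$ when $a>0$ and $=1$ when $a=0$; either way the one-step shape is preserved) and equals $a\ra_* c$ when $p+s>r$. Taking $\bv_{s<t}$ of this step function with jump at $s=\max\{0,r-p\}$ yields $\ka_{\max\{0,r-p\},\,a\ra_* c}$, after a small check at the boundary. (If one prefers, invoke the alternative description $\ka_{p,a}\otimes_*\ka_{q,b}=\ka_{p+q,a*b}$ directly: $\ka_{q,b}\leq(\ka_{p,a}\Ra_*\ka_{r,c})$ iff $\ka_{p+q,a*b}\leq\ka_{r,c}$ iff ($p+q\geq r$ or $a*b\leq c$) iff $q\geq\max\{0,r-p\}$ or $b\leq a\ra_* c$, which is precisely $\ka_{q,b}\leq\ka_{\max\{0,r-p\},\,a\ra_* c}$.)

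The main obstacle is step (iv), specifically the passage between the ``pointwise for all $q$'' family of inequalities and the closed form $\bv_{s<t}(a\ra_*\xi(p+s))$: one must handle the interaction of the supremum in the definition of a distance distribution (the $\bv_{s<t}$) with the meet $\bw_{r>0}$ coming from Lemma~\ref{De-calc}\ref{De-calc:monotone}, and this is where left-continuity of $\xi$ on $(0,\infty]$ and the Scott-continuity (meet-preservation in the second variable, sup-preservation in the first) of $\ra_*$ for a \emph{continuous} t-norm are used essentially. Everything else is a routine unwinding of the adjunctions already recorded in Lemma~\ref{De-calc}.
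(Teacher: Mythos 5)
Your proof of the general formula is correct and is essentially the paper's own argument: reduce to one-step test functions via Lemma~\ref{De-calc}\ref{De-calc:imp}, compose one-step functions via Lemma~\ref{De-calc}\ref{De-calc:step-comp}, translate via Lemma~\ref{De-calc}\ref{De-calc:monotone} and the adjunction in $[0,1]_*$, and recognise the resulting condition as $\ka_{q,b}\leq\theta^-$ for the monotone map $\theta(t)=a\ra_*\xi(p+t)$. Your hand-made ``collapse'' of $\bw_{r>0}\theta^-(q+r)$ to $\bw_{r>0}\theta(q+r)$ is just Lemma~\ref{De-calc}\ref{De-calc:monotone} again and needs only monotonicity, not left-continuity of $\xi$ or continuity of $*$; likewise $a\ra_*(-)$ preserves meets simply because it is a right adjoint. (One inequality in your forward direction, $a\ra_*\xi(p+q)\geq\bw_{r'>0}(a\ra_*\xi(p+q+r'))$, points the wrong way, but you then argue correctly.)

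The genuine gap is in the ``in particular'' clause. Your claim that $a\ra_*0=0$ whenever $a>0$ fails for continuous t-norms with zero divisors: for the {\L}ukasiewicz t-norm, $a\ra_{\oplus}0=1-a$. Substituting $\xi=\ka_{r,c}$ into the general formula therefore gives, when $r>p$, a two-step function with value $a\ra_*0$ on $(0,r-p]$ and $a\ra_*c$ on $(r-p,\infty]$, which is not $\ka_{\max\{0,r-p\},\,a\ra_*c}$ unless $a\ra_*0=0$. This is not a removable defect of your argument: for $*=\oplus$, $p=0$, $a=c=1/2$, $r=1$, the distribution $\theta$ with $\theta(t)=1/2$ on $(0,1]$ and $\theta(t)=1$ on $(1,\infty]$ satisfies $\ka_{0,1/2}\otimes_{\oplus}\theta\leq\ka_{1,1/2}$, so $\ka_{0,1/2}\Ra_{\oplus}\ka_{1,1/2}\geq\theta>\ka_{1,1}$, contradicting the displayed identity. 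Your fallback derivation does not repair this: the step ``$\ka_{p+q,a*b}\leq\ka_{r,c}$ iff ($p+q\geq r$ or $a*b\leq c$)'' should read ``and'' rather than ``or'', and even with that fix it ignores the case $a*b=0$ with $p+q<r$, where the left side holds and the right side fails---exactly where zero divisors bite. (For what it is worth, the paper's own proof of the second claim silently omits the same $a*b=0$ case, so the ``in particular'' identity really needs a side hypothesis such as $a\ra_*0=0$, as for the minimum and product t-norms, or $r\leq p$; only the first, general formula is valid for an arbitrary continuous t-norm.)
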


\begin{proof}
The map $\theta:[0,\infty]\to[0,1]$ with $\theta(t)=a\ra_*\xi(p+t)$ is clearly monotone. By Lemma \ref{De-calc}\ref{De-calc:imp} it suffices to prove
$$\ka_{p,a}\otimes_*\ka_{q,b}\leq\xi\iff\ka_{q,b}\leq\theta^-$$
for all $q\in[0,\infty)$, $b\in[0,1]$. Indeed,
\begin{align*}
\ka_{p,a}\otimes_*\ka_{q,b}\leq\xi&\iff\ka_{p+q,a*b}\leq\xi&(\text{Lemma \ref{De-calc}\ref{De-calc:step-comp}})\\
&\iff a*b\leq\bw_{r>0}\xi(p+q+r)&(\text{Lemma \ref{De-calc}\ref{De-calc:monotone}})\\
&\iff b\leq a\ra_*\bw_{r>0}\xi(p+q+r)=\bw_{r>0}\theta(q+r)\\
&\iff\ka_{q,b}\leq\theta^-,&(\text{Lemma \ref{De-calc}\ref{De-calc:monotone}})
\end{align*}
as desired. In particular, if $\xi=\ka_{r,c}$, then for all $q\in[0,\infty)$, $b\in[0,1]$ one has
\begin{align*}
\ka_{p,a}\otimes_*\ka_{q,b}\leq\ka_{r,c}&\iff\ka_{p+q,a*b}\leq\ka_{r,c}&(\text{Lemma \ref{De-calc}\ref{De-calc:step-comp}})\\
&\iff r\leq p+q\ \text{and}\ a*b\leq c\\
&\iff\max\{0,r-p\}\leq q\ \text{and}\ b\leq a\ra_*c\\
&\iff\ka_{q,b}\leq\ka_{\max\{0,r-p\},\,a\ra_* c}.
\end{align*}
Hence $\ka_{p,a}\Ra_*\ka_{r,c}=\ka_{\max\{0,r-p\},\,a\ra_* c}$, again by Lemma \ref{De-calc}\ref{De-calc:imp}.
\end{proof}

Now we turn to the general case. For $\phi,\xi\in\De$, define a map
\begin{equation} \label{rho-def}
\rho_*(\phi,\xi):[0,\infty]\to[0,1],\quad\rho_*(\phi,\xi)(t)=\bw_{q>0}\phi(q)\ra_*\xi(q+t).
\end{equation}
If we interpret $a\ra_* b$ as the ``distance'' of $a,b\in[0,1]$ w.r.t. the t-norm $*$, then $\rho_*(\phi,\xi)(t)$ gives the ``smallest vertical distance'' between the graphs of $\phi$ and the monotone map obtained by shifting $\xi$ to the left for $t$ units. Then the implication in $\De_*$ will be given by
$$\phi\Ra_*\xi=\rho_*(\phi,\xi)^-:$$

\begin{thm} \label{De-implication}
Let $\phi,\xi\in\De$. Then the implication $\phi\Ra_*\xi$ in the quantale $\De_*$ is given by
$$(\phi\Ra_*\xi)(t)=\rho_*(\phi,\xi)^-(t)=\bv_{s<t}\bw_{q>0}\phi(q)\ra_*\xi(q+s)$$
for all $t\in[0,\infty]$.
\end{thm}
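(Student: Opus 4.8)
The plan is to recognize $\phi\Ra_*\xi$ as the unique distance distribution $\theta$ with the property that $\phi\otimes_*\psi\leq\xi\iff\psi\leq\theta$ for all $\psi\in\De$, and then to verify that $\theta=\rho_*(\phi,\xi)^-$ does the job. First I would note that $\rho_*(\phi,\xi)$ is a monotone map: for each fixed $q>0$ the map $t\mapsto\xi(q+t)$ is monotone, hence so is $t\mapsto\phi(q)\ra_*\xi(q+t)$, and an infimum of monotone maps is monotone. By Lemma \ref{monotone-dd} this guarantees that $\rho_*(\phi,\xi)^-\in\De$ and that
$$\rho_*(\phi,\xi)^-(t)=\bv_{s<t}\rho_*(\phi,\xi)(s)=\bv_{s<t}\bw_{q>0}\phi(q)\ra_*\xi(q+s),$$
which is exactly the closed form appearing in the statement. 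It therefore remains to establish the single equivalence $\phi\otimes_*\psi\leq\xi\iff\psi\leq\rho_*(\phi,\xi)^-$ for all $\psi\in\De$.

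By Lemma \ref{De-calc}\ref{De-calc:imp} it suffices to check this equivalence when $\psi$ is a one-step function $\ka_{p,a}$ with $p\in[0,\infty)$, $a\in[0,1]$; that is, to show $\phi\otimes_*\ka_{p,a}\leq\xi\iff\ka_{p,a}\leq\rho_*(\phi,\xi)^-$. To unwind the left-hand side I would write $\phi=\bv_{q\in[0,\infty)}\ka_{q,\phi(q)}$ (Lemma \ref{De-step-join}) and use that $\otimes_*$ preserves suprema together with Lemma \ref{De-calc}\ref{De-calc:step-comp} to obtain $\phi\otimes_*\ka_{p,a}=\bv_{q\in[0,\infty)}\ka_{q+p,\,\phi(q)*a}$; hence $\phi\otimes_*\ka_{p,a}\leq\xi$ holds iff $\ka_{q+p,\,\phi(q)*a}\leq\xi$ for every $q\in[0,\infty)$. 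Applying Lemma \ref{De-calc}\ref{De-calc:monotone} to each such one-step function (unpacking the resulting infimum over $r>0$) and then residuating in the quantale $[0,1]_*$, this condition becomes
$$\forall q\in[0,\infty),\ \forall r>0:\quad a\leq\phi(q)\ra_*\xi(q+p+r),$$
i.e. $a\leq\bw_{q\in[0,\infty),\,r>0}\phi(q)\ra_*\xi(q+p+r)$.

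Finally, since $\phi(0)=0$ is the bottom element of $[0,1]_*$, every term with $q=0$ equals $1$ and may be discarded, so the infimum over $q\in[0,\infty)$ coincides with the infimum over $q>0$, which by \eqref{rho-def} equals $\bw_{r>0}\rho_*(\phi,\xi)(p+r)$. A further application of Lemma \ref{De-calc}\ref{De-calc:monotone}, now to the monotone map $\rho_*(\phi,\xi)$, rewrites $a\leq\bw_{r>0}\rho_*(\phi,\xi)(p+r)$ as $\ka_{p,a}\leq\rho_*(\phi,\xi)^-$, which is precisely the desired equivalence. Feeding this back through Lemma \ref{De-calc}\ref{De-calc:imp} yields $\phi\otimes_*\psi\leq\xi\iff\psi\leq\rho_*(\phi,\xi)^-$ for all $\psi\in\De$, so $\phi\Ra_*\xi=\rho_*(\phi,\xi)^-$ and the displayed formula follows. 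I do not expect a genuine obstacle here, since the argument is a chain of reductions through the three parts of Lemma \ref{De-calc} and Lemma \ref{De-step-join}; the only points requiring care are (i) confirming the monotonicity of $\rho_*(\phi,\xi)$, so that Lemma \ref{monotone-dd} actually supplies the stated closed form for $\rho_*(\phi,\xi)^-$, and (ii) the boundary bookkeeping — the triviality of the $q=0$ terms and the observation that no separate treatment of $t=\infty$ is needed, because $\phi\Ra_*\xi$ is pinned down uniquely by the adjunction equivalence valid for all $\psi\in\De$ at once.
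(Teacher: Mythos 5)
Your proposal is correct and follows essentially the same route as the paper: reduce via Lemma \ref{De-calc}\ref{De-calc:imp} to one-step functions $\ka_{p,a}$, decompose $\phi$ by Lemma \ref{De-step-join}, apply Lemma \ref{De-calc}\ref{De-calc:step-comp} and \ref{De-calc:monotone}, residuate in $[0,1]_*$, and recognize the resulting infimum as $\bw_{r>0}\rho_*(\phi,\xi)(p+r)$ before applying Lemma \ref{De-calc}\ref{De-calc:monotone} once more. Your explicit checks of the monotonicity of $\rho_*(\phi,\xi)$ and of the triviality of the $q=0$ terms are minor bookkeeping points the paper leaves implicit, not a different argument.
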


\begin{proof}
By Lemma \ref{De-calc}\ref{De-calc:imp} it suffices to prove
$$\phi\otimes_*\ka_{p,a}\leq\xi\iff\ka_{p,a}\leq\rho_*(\phi,\xi)^-$$
for all $p\in[0,\infty)$, $a\in[0,1]$. Indeed,
\begin{align*}
\phi\otimes_*\ka_{p,a}\leq\xi&\iff\forall q\in(0,\infty):\ \ka_{q,\phi(q)}\otimes_*\ka_{p,a}\leq\xi&(\text{Lemma \ref{De-step-join}})\\
&\iff\forall q\in(0,\infty):\ \ka_{p+q,a*\phi(q)}\leq\xi&(\text{Lemma \ref{De-calc}\ref{De-calc:step-comp}})\\
&\iff\forall q\in(0,\infty):\ a*\phi(q)\leq\bw_{r>0}\xi(p+q+r)&(\text{Lemma \ref{De-calc}\ref{De-calc:monotone}})\\
&\iff\forall q\in(0,\infty):\ a\leq\phi(q)\ra_*\bw_{r>0}\xi(p+q+r)\\
&\iff a\leq\bw_{\substack{q>0\\r>0}}\phi(q)\ra_*\xi(p+q+r)=\bw_{r>0}\rho_*(\phi,\xi)(p+r)\\
&\iff\ka_{p,a}\leq\rho_*(\phi,\xi)^-,&(\text{Lemma \ref{De-calc}\ref{De-calc:monotone}})
\end{align*}
which completes the proof.
\end{proof}

Now we are ready to characterize diagonals of the quantale $\De_*$. Recall from \eqref{diagonal-def} that $\xi:\phi\rqa_*\psi$ is a diagonal if, and only if, $\xi:\phi\rqa_*\phi$ is a diagonal on $\phi$ and $\xi:\psi\rqa_*\psi$ is a diagonal on $\psi$; that is, $\xi$ is simultaneously divisible by $\phi$ and $\psi$. Hence, along with \eqref{xi-leq-phi-psi} it suffices to find those distance distributions below a given $\phi\in\De$ that are diagonals on $\phi$. First, let us look at the case when $\phi$ is a one-step function:
%Recall from \eqref{diagonal-def} and \eqref{diagonal-divisible} that $\xi:\phi\rqa_*\psi$ is a diagonal of $\De_*$ if, and only if, $\xi$ is divisible by $\phi$ and $\psi$. Hence, along with \eqref{xi-leq-phi-psi}, it suffices to find those distance distributions below a given $\phi\in\De$ that are divisible by $\phi$ in $\De_*$.

\begin{prop} \label{step-divisible}
Let $p\in[0,\infty)$, $a\in[0,1]$. Then $\xi\in\De$ is a diagonal on $\ka_{p,a}$ in the quantale $\De_*$ if, and only if, $\xi\leq\ka_{p,a}$.
\end{prop}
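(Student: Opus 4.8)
The plan is to prove the two implications separately; only the converse requires work. For the \emph{only if} direction, note that $\De_*$ is integral, so \eqref{DQpq} applied with $\sQ=\De_*$ forces any diagonal $\xi$ on $\ka_{p,a}$ to satisfy $\xi\leq\ka_{p,a}\wedge\ka_{p,a}=\ka_{p,a}$. Thus the substance lies in the \emph{if} direction: given $\xi\leq\ka_{p,a}$, I must produce $\psi\in\De$ with $\ka_{p,a}\otimes_*\psi=\xi$, since by \eqref{diagonal-divisible} this divisibility is precisely the assertion that $\xi$ is a diagonal on $\ka_{p,a}$.

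To build such a $\psi$ I would use the one-step decomposition $\xi=\bv_{q\in[0,\infty)}\ka_{q,\xi(q)}$ from Lemma \ref{De-step-join}. Since $\xi\leq\ka_{p,a}$, the factors with $q\leq p$ have $\xi(q)=0$ and are the bottom element, so in effect $\xi=\bv_{q>p}\ka_{q,\xi(q)}$, with $\xi(q)\leq a$ throughout. The key step is that each surviving one-step factor is divisible by $\ka_{p,a}$: since $[0,1]_*$ is a divisible quantale and $\xi(q)\leq a$, the element $b_q:=a\ra_*\xi(q)$ satisfies $a*b_q=a\wedge\xi(q)=\xi(q)$, whence $\ka_{p,a}\otimes_*\ka_{q-p,b_q}=\ka_{q,a*b_q}=\ka_{q,\xi(q)}$ by Lemma \ref{De-calc}\ref{De-calc:step-comp} (note $q-p\in(0,\infty)$ because $q>p$ and $p<\infty$). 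Putting $\psi:=\bv_{q>p}\ka_{q-p,b_q}\in\De$ and using that $\ka_{p,a}\otimes_*{-}$ preserves suprema then gives $\ka_{p,a}\otimes_*\psi=\bv_{q>p}\ka_{q,\xi(q)}=\xi$, as required. (Equivalently one could take $\psi:=\ka_{p,a}\Ra_*\xi$ and check $\xi\leq\ka_{p,a}\otimes_*\psi$ pointwise from the formula of Proposition \ref{step-implication}, which comes down to the same appeal to fibrewise divisibility.)

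I do not expect a serious obstacle here; the only care needed is the bookkeeping — verifying that the $q\leq p$ factors are genuinely negligible so the reassembled supremum is $\xi$ and not something strictly smaller, and observing that $\psi\in\De$ automatically as a supremum of distance distributions. Conceptually, the point worth highlighting is that although $\De_*$ is non-divisible by Proposition \ref{De-non-div}, divisibility of the t-norm $*$ inside each fibre $[0,1]_*$ is still available and can be invoked one step function at a time; this is exactly why the ``lengths'' $\ka_{p,a}$ are so well behaved and why, on them, the diagonals collapse to the down set $\da\ka_{p,a}$. The genuinely hard work is the analogous question for a general $\phi\in\De$, which is what Theorems \ref{dd-div} and \ref{step-only-divisible-lower} must confront.
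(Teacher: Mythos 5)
Your proof is correct, but it takes a different route from the paper's. You decompose $\xi$ into one-step functions via Lemma \ref{De-step-join}, observe that the summands with $q\leq p$ are bottom because $\xi\leq\ka_{p,a}$, divide each surviving summand by $\ka_{p,a}$ individually (fibrewise divisibility of $[0,1]_*$ gives $a*(a\ra_*\xi(q))=\xi(q)$, and Lemma \ref{De-calc}\ref{De-calc:step-comp} gives $\ka_{p,a}\otimes_*\ka_{q-p,\,a\ra_*\xi(q)}=\ka_{q,\xi(q)}$), and then reassemble using that $\ka_{p,a}\otimes_*{-}$ preserves suprema. The paper instead writes down an explicit global witness, $\theta(t)=a\ra_*\xi(p+t)$ (essentially the implication $\ka_{p,a}\Ra_*\xi$ of Proposition \ref{step-implication}, before regularization), computes $(\ka_{p,a}\otimes_*\theta)(t)=\xi(t)$ pointwise for finite $t$ using the same divisibility of $[0,1]_*$, then invokes Lemma \ref{convolution-dd} to replace $\theta$ by the distance distribution $\theta^-$ and handles $t=\infty$ by left-continuity of both sides. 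What your approach buys is that all the regularity bookkeeping (left-continuity of the divisor, the exceptional point $\infty$ where Equation \eqref{monotone-convolution-finite} can fail) is absorbed into already-established facts about one-step functions and the quantale structure of $\De_*$, so no separate fix-up is needed; what the paper's approach buys is an explicit closed-form divisor, confirming that the canonical candidate $\ka_{p,a}\Ra_*\xi$ itself does the division, in line with the criterion \eqref{diagonal-divisible} and with the later analysis of general $\phi$ in Theorem \ref{dd-div}. Your parenthetical alternative (taking $\psi=\ka_{p,a}\Ra_*\xi$ and checking pointwise) is precisely the paper's argument, so you correctly identified both options; your "only if" direction via integrality and \eqref{DQpq} also matches the paper's implicit use of \eqref{xi-leq-phi-psi}.
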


\begin{proof}
It suffices to show that every $\xi\leq\ka_{p,a}$ is divisible by $\ka_{p,a}$. Define $\theta:[0,\infty]\to[0,1]$ with $\theta(t)=a\ra_*\xi(p+t)$ as in the proof of Proposition \ref{step-implication}. Then, by Equation \eqref{monotone-convolution-finite}, for any $t\in[0,\infty)$ one has
$$(\ka_{p,a}\otimes_*\theta)(t)=\bv_{s\leq t}\ka_{p,a}(s)*(a\ra_*\xi(p+t-s))=\bv_{p<s\leq t}a*(a\ra_*\xi(p+t-s))=\bv_{p<s\leq t}\xi(p+t-s)=\xi(t),$$
where the penultimate equality follows from $\xi(p+t-s)\leq a$ and the divisibility of the quantale $[0,1]_*$. Thus
%$$(\ka_{p,a}\otimes(\ka_{p,a}\Ra_*\xi))(t)=(\ka_{p,a}\otimes_*\theta^-)(t)=(\ka_{p,a}\otimes_*\theta)(t)=\xi(t)$$
$$(\ka_{p,a}\otimes_*\theta^-)(t)=(\ka_{p,a}\otimes_*\theta)(t)=\xi(t)$$
for all $t\in[0,\infty)$ by Lemma \ref{convolution-dd}. Since $\ka_{p,a}\otimes_*\theta^-$ and $\xi$, as distance distributions, are both left-continuous at $\infty$,
$$(\ka_{p,a}\otimes_*\theta^-)(\infty)=\xi(\infty)$$
follows immediately. Hence $\ka_{p,a}\otimes_*\theta^-=\xi$, showing that $\xi$ is divisible by $\ka_{p,a}$.
\end{proof}

For a general $\phi\in\De$, diagonals on $\phi$ in $\De_*$ usually constitute a proper subset of $\da\phi$ (as we will see in Theorem \ref{step-only-divisible-lower} below), and they are characterized as follows:

\begin{thm} \label{dd-div}
Let $\phi\in\De$. Then $\xi\in\De$ is a diagonal on $\phi$ in the quantale $\De_*$ if, and only if,
$$\xi(t)=\bv_{s<t}\bw_{q>0}\phi(t-s)*(\phi(q)\ra_*\xi(q+s))$$
for all $t\in[0,\infty)$. %that is,
%$$\forall r>0,\ \exists s<t,\ \forall q>0,\ \xi(t)-r\leq\phi(t-s)*(\phi(q)\ra_*\xi(q+s)).$$
\end{thm}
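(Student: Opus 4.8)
The plan is to use the basic equivalence recorded in \eqref{diagonal-divisible}: $\xi$ is a diagonal on $\phi$ if and only if $\xi$ is divisible by $\phi$, which in turn holds if and only if $\phi\otimes_*(\phi\Ra_*\xi)=\xi$ (and then $\xi\leq\phi$ is automatic by integrality of $\De_*$). So the whole statement reduces to computing the convolution $\phi\otimes_*(\phi\Ra_*\xi)$ and reading off when it agrees with $\xi$ pointwise. For $\phi\Ra_*\xi$ we already have the explicit description of Theorem \ref{De-implication}: writing $\theta:=\rho_*(\phi,\xi)$ for the monotone map of \eqref{rho-def}, one has $\phi\Ra_*\xi=\theta^-$.

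Next I would replace the convolution with $\theta^-$ by the convolution with $\theta$ itself. Since $\phi\in\De$, Lemma \ref{convolution-dd} (Equation \eqref{monotone-convolution-dd-finite}) gives $(\phi\otimes_*\theta^-)(t)=(\phi\otimes_*\theta)(t)$ for all $t\in[0,\infty)$, so it suffices to analyse $(\phi\otimes_*\theta)(t)$ on $[0,\infty)$; the value at $t=\infty$ costs nothing extra, because both $\phi\otimes_*\theta^-=\phi\otimes_*(\phi\Ra_*\xi)$ and $\xi$ are distance distributions and hence left-continuous at $\infty$, exactly as in the last lines of the proof of Proposition \ref{step-divisible}. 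Using the representation \eqref{monotone-convolution-finite} of convolutions on $[0,\infty)$ together with the definition of $\theta$, and dropping the summand $s=t$ since $\phi(0)=0$, one gets
$$(\phi\otimes_*\theta)(t)=\bv_{s\leq t}\phi(t-s)*\theta(s)=\bv_{s<t}\phi(t-s)*\bw_{q>0}\bigl(\phi(q)\ra_*\xi(q+s)\bigr).$$

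The one remaining point, and the only step that is not pure bookkeeping, is to pull the factor $\phi(t-s)*(-)$ inside the infimum over $q$: one needs $\phi(t-s)*\bw_{q>0}(\phi(q)\ra_*\xi(q+s))=\bw_{q>0}\phi(t-s)*(\phi(q)\ra_*\xi(q+s))$. This is where the continuity of the t-norm $*$ is used, since a continuous monotone self-map of $[0,1]$ preserves arbitrary infima. Granting this, $(\phi\otimes_*\theta)(t)$ is precisely the right-hand side of the claimed identity for every finite $t$, and assembling the chain of equivalences — $\xi$ is a diagonal on $\phi$ $\iff$ $\phi\otimes_*(\phi\Ra_*\xi)=\xi$ $\iff$ $(\phi\otimes_*\theta)(t)=\xi(t)$ for all $t\in[0,\infty)$ $\iff$ the displayed formula holds for all $t\in[0,\infty)$ — finishes the proof. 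The subtlety to watch is the passage at $t=\infty$ in the middle equivalence: the forward direction is immediate from \eqref{monotone-convolution-dd-finite}, while the backward direction needs the left-continuity-at-$\infty$ remark above to upgrade an equality on $[0,\infty)$ to an equality of distance distributions.
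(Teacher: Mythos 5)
Your proposal is correct and follows essentially the same route as the paper: reduce to $\xi=\phi\otimes_*(\phi\Ra_*\xi)$, compute the convolution via Theorem \ref{De-implication}, Lemma \ref{convolution-dd} and Equation \eqref{monotone-convolution-finite} (this is exactly the content of the paper's Lemma \ref{dd-diagonal-comp}), and settle $t=\infty$ by left-continuity as in Proposition \ref{step-divisible}. You even make explicit the interchange of $\phi(t-s)*(-)$ with the infimum over $q$, justified by continuity of the t-norm, which the paper's proof of Lemma \ref{dd-diagonal-comp} uses silently.
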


Note that $\xi\in\De$ is a diagonal on $\phi$ if, and only if, $\xi=\phi\otimes_*(\phi\Ra_*\xi)$, which is equivalent to say that
$$\xi(t)=(\phi\otimes_*(\phi\Ra_*\xi))(t)$$
for all $t\in[0,\infty)$ for the same argument as in the last part of the proof of Proposition \ref{step-divisible}. Therefore, Theorem \ref{dd-div} is an immediate consequence of the following lemma:

\begin{lem} \label{dd-diagonal-comp}
For $\phi,\psi,\xi\in\De$,
$$(\psi\otimes_*(\phi\Ra_*\xi))(t)=\bv_{s<t}\bw_{q>0}\psi(t-s)*(\phi(q)\ra_*\xi(q+s))$$
for all $t\in[0,\infty)$.
\end{lem}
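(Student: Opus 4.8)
The plan is to evaluate $\psi\otimes_*(\phi\Ra_*\xi)$ pointwise at a finite argument $t$ by feeding the explicit description of implications in $\De_*$ (Theorem \ref{De-implication}) into the convolution formula \eqref{monotone-convolution-finite}. Recall that $\phi\Ra_*\xi=\rho_*(\phi,\xi)^-$, where $\rho_*(\phi,\xi)(u)=\bw_{q>0}\phi(q)\ra_*\xi(q+u)$; since $\xi$ is monotone and $a\ra_*(-)$ is monotone for each $a$, the map $\rho_*(\phi,\xi):[0,\infty]\to[0,1]$ is monotone. Thus the quantity we want is $(\psi\otimes_*\rho_*(\phi,\xi)^-)(t)$.

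First I would strip the operation $(-)^-$: by Lemma \ref{convolution-dd}, specifically Equation \eqref{monotone-convolution-dd-finite} (which applies since $\psi\in\De$), one has $(\psi\otimes_*\rho_*(\phi,\xi)^-)(t)=(\psi\otimes_*\rho_*(\phi,\xi))(t)$ for every $t\in[0,\infty)$; this is exactly why the lemma is stated only for finite $t$. Next, since $\psi$ and $\rho_*(\phi,\xi)$ are monotone maps, Equation \eqref{monotone-convolution-finite} gives, for $t\in[0,\infty)$,
$$(\psi\otimes_*\rho_*(\phi,\xi))(t)=\bv_{s\leq t}\psi(t-s)*\rho_*(\phi,\xi)(s)=\bv_{s\leq t}\psi(t-s)*\bw_{q>0}\big(\phi(q)\ra_*\xi(q+s)\big),$$
and the term $s=t$ vanishes because $\psi(0)=0$ is the bottom element of $[0,1]$, so that $0*(-)=0$; hence the supremum may be restricted to $s<t$.

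It then remains to pull the factor $\psi(t-s)$ inside the infimum over $q$, i.e. to check $a*\bw_{q>0}b_q=\bw_{q>0}(a*b_q)$ with $a=\psi(t-s)$ and $b_q=\phi(q)\ra_*\xi(q+s)$. The inequality $\leq$ is immediate from monotonicity of $*$; for $\geq$ I would invoke the continuity of the t-norm $*$: choosing $q_n>0$ with $b_{q_n}\to\bw_{q>0}b_q$, continuity of $(a*-)$ gives $a*b_{q_n}\to a*\bw_{q>0}b_q$, whence $a*\bw_{q>0}b_q\geq\bw_{q>0}(a*b_q)$. Substituting this identity into the expression above yields exactly $\bv_{s<t}\bw_{q>0}\psi(t-s)*(\phi(q)\ra_*\xi(q+s))$, as claimed.

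I expect this last distributivity step to be the only genuine obstacle: a quantale multiplication preserves suprema automatically, but passing it through an infimum is precisely what forces us to use the \emph{continuity} of $*$ rather than just its sup-preservation; the remaining steps are routine bookkeeping once Theorem \ref{De-implication} and Lemma \ref{convolution-dd} are in hand.
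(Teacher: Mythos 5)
Your proof is correct and follows essentially the same route as the paper's: it strips the $(-)^-$ via Lemma \ref{convolution-dd} (valid at finite $t$ since $\psi\in\De$) and then expands the convolution with $\rho_*(\phi,\xi)$ via Equation \eqref{monotone-convolution-finite}. The only difference is that you make explicit two steps the paper leaves implicit --- discarding the $s=t$ term because $\psi(0)=0$, and distributing $*$ over the infimum in $q$ using continuity of the t-norm --- and both are handled correctly.
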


\begin{proof}
From Lemma \ref{convolution-dd} one sees that
$$(\psi\otimes_*(\phi\Ra_*\xi))(t)=(\psi\otimes_*\rho_*(\phi,\xi)^-)(t)=(\psi\otimes_*\rho_*(\phi,\xi))(t)$$
for all $t\in[0,\infty)$, where $\rho_*(\phi,\xi)$ is defined by Equation \eqref{rho-def}. Since
$$(\psi\otimes_*\rho_*(\phi,\xi))(t)=\bv_{s\leq t}\psi(t-s)*\rho_*(\phi,\xi)(s)=\bv_{s<t}\bw_{q>0}\psi(t-s)*(\phi(q)\ra_*\xi(q+s))$$
for all $t\in[0,\infty)$ by Equation \eqref{monotone-convolution-finite}, the conclusion thus follows.
\end{proof}

\begin{exmp}
It follows soon from \eqref{diagonal-divisible} that the convolution
$$\phi\otimes_*\psi$$
is a diagonal between $\phi,\psi\in\De$ in the quantale $\De_*$. In particular,
$$\{\phi\otimes_*\psi\mid\psi\in\De\}$$
gives a set of diagonals on $\phi$. Hence, $\phi\otimes_*\psi$ must satisfy the condition given in Theorem \ref{dd-div}; indeed, for any $t\in[0,\infty)$,
$$\bv_{s<t}\phi(t-s)*\psi(s)=(\phi\otimes_*\psi)(t)\leq\bv_{s<t}\bw_{q>0}\phi(t-s)*(\phi(q)\ra_*(\phi\otimes_*\psi)(q+s))$$
since
$$\forall q,s\in[0,\infty]:\ \phi(q)*\psi(s)\leq(\phi\otimes_*\psi)(q+s)$$
implies
$$\forall s\in[0,\infty]: \psi(s)\leq\bw_{q>0}\phi(q)\ra_*(\phi\otimes_*\psi)(q+s).$$
\end{exmp}

Here we present an alternative characterization for diagonals of the quantale $\De_{\wedge}$. Note that each distance distribution $\phi:[0,\infty]\to[0,1]$ is $\sup$-preserving, and thus admits a right adjoint
$$\phi^{\flat}:[0,1]\to[0,\infty],\quad\phi^{\flat}(a):=\bv\{p\in[0,\infty]\mid\phi(p)\leq a\}.$$

\begin{thm} \label{dd-div-minimum}
Let $\phi\in\De$. Then $\xi\in\De$ is a diagonal on $\phi$ in the quantale $\De_{\wedge}$ if, and only if, there exists $\psi\in\De$ with
$$\xi^{\flat}=\phi^{\flat}+\psi^{\flat}.$$
\end{thm}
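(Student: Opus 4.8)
The plan is to reduce the statement to a single identity linking the convolution $\otimes_{\wedge}$ to ordinary addition through the $\flat$-operation $\phi\mapsto\phi^{\flat}$. Recall from \eqref{diagonal-divisible} and the discussion after Proposition \ref{DQ-def} that $\xi\in\De$ is a diagonal on $\phi$ if, and only if, $\xi$ is divisible by $\phi$, i.e. if, and only if, $\phi\otimes_{\wedge}\psi=\xi$ for some $\psi\in\De$. So the theorem is immediate once we have the identity
$$(\phi\otimes_{\wedge}\psi)^{\flat}=\phi^{\flat}+\psi^{\flat}\qquad(\phi,\psi\in\De)$$
together with the injectivity of $\phi\mapsto\phi^{\flat}$ on $\De$; the latter is clear, since the Galois connection $\phi(p)\leq a\iff p\leq\phi^{\flat}(a)$ recovers $\phi$ from $\phi^{\flat}$. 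Indeed, applying $\flat$ to $\phi\otimes_{\wedge}\psi=\xi$ and using injectivity shows that this equation is equivalent to $\phi^{\flat}+\psi^{\flat}=\xi^{\flat}$, which is exactly the asserted condition.

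To establish the identity I would fix $a\in[0,1]$ and compare the two sides pointwise. Every distance distribution $\eta$ is $\sup$-preserving, so $\eta^{\flat}(a)=\bv\{t\mid\eta(t)\leq a\}$ and $\eta(t)\leq a\iff t\leq\eta^{\flat}(a)$; hence it suffices to prove, for all $t\in[0,\infty]$,
$$(\phi\otimes_{\wedge}\psi)(t)\leq a\iff t\leq\phi^{\flat}(a)+\psi^{\flat}(a).$$
Using that for $\phi,\psi\in\De$ one has $(\phi\otimes_{\wedge}\psi)(t)=\bv_{r+s=t}\phi(r)\wedge\psi(s)$ for \emph{every} $t\in[0,\infty]$ (the convolution formula for distance distributions recorded just before Proposition \ref{De-quantale}), the left-hand side says precisely: for every decomposition $r+s=t$, $\phi(r)\leq a$ or $\psi(s)\leq a$ — and it is here that the minimum t-norm is essential, since $[0,1]$ is a chain. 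Translating each disjunct through the Galois connection, this becomes: for every $r+s=t$, $r\leq\phi^{\flat}(a)$ or $s\leq\psi^{\flat}(a)$.

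The remaining point, which I expect to be the only one requiring care, is the purely order-theoretic equivalence: writing $P=\phi^{\flat}(a)$ and $Q=\psi^{\flat}(a)$,
$$\big(\text{for all }r,s\in[0,\infty]\text{ with }r+s=t:\ r\leq P\text{ or }s\leq Q\big)\iff t\leq P+Q.$$
For ``$\Leftarrow$'': if $P=\infty$ or $Q=\infty$ the left side is trivial; otherwise $t\leq P+Q<\infty$, and any $r+s=t$ with $r>P$ has $s=t-r\leq(P+Q)-r<Q$. For ``$\Rightarrow$'' I would argue by contraposition: if $t>P+Q$ then necessarily $P,Q<\infty$, and the decomposition $r=(t+P-Q)/2$, $s=(t-P+Q)/2$ (when $t<\infty$), or $r=P+1$, $s=\infty$ (when $t=\infty$), satisfies $r>P$ and $s>Q$, violating the left side. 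Taking the supremum over $t$ in the displayed equivalence yields $(\phi\otimes_{\wedge}\psi)^{\flat}(a)=P+Q=\phi^{\flat}(a)+\psi^{\flat}(a)$, which proves the identity and hence the theorem. The small case split forced by the extended arithmetic on $[0,\infty]$ (notably $\infty+Q=\infty$) is the only subtlety; everything else is a formal manipulation of the adjunctions involved.
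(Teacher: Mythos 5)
Your proof is correct and follows essentially the same route as the paper: both reduce the theorem to the identity $(\phi\otimes_{\wedge}\psi)^{\flat}=\phi^{\flat}+\psi^{\flat}$ and verify it pointwise via the chain of equivalences $(\phi\otimes_{\wedge}\psi)(t)\leq a\iff\forall r+s=t:\ \phi(r)\leq a\ \text{or}\ \psi(s)\leq a\iff t\leq\phi^{\flat}(a)+\psi^{\flat}(a)$, using that $[0,1]$ is a chain and the Galois connection $\phi(t)\leq a\iff t\leq\phi^{\flat}(a)$. The only difference is that you spell out the injectivity of $(-)^{\flat}$ and the extended-arithmetic cases at $\infty$, which the paper dismisses as trivial.
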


\begin{proof}
It suffices to show that $(\phi\otimes_{\wedge}\psi)^{\flat}=\phi^{\flat}+\psi^{\flat}$ for all $\phi,\psi\in\De$. This is true since
\begin{align*}
(\phi\otimes_{\wedge}\psi)(t)\leq a&\iff\forall s\leq t:\ \phi(s)\wedge\psi(t-s)\leq a\\
&\iff\forall s\leq t:\ \phi(s)\leq a\ \ \text{or}\ \ \psi(t-s)\leq a\\
&\iff\forall s\leq t:\ s\leq\phi^{\flat}(a)\ \ \text{or}\ \ t-s\leq\psi^{\flat}(a)\\
&\iff t\leq\phi^{\flat}(a)+\psi^{\flat}(a)
\end{align*}
for all $t\in[0,\infty]$, $a\in[0,1]$, where the last equivalence is valid since
$$\exists s\leq t:\ s>\phi^{\flat}(a)\ \ \text{and}\ \ t-s>\psi^{\flat}(a)\iff t>\phi^{\flat}(a)+\psi^{\flat}(a)$$
trivially holds.
\end{proof}

In fact, as the following theorem shows, Theorem \ref{dd-div} cannot be reduced to Proposition \ref{step-divisible} unless $\phi$ is a one-step function, which also gives a stronger proof for Proposition \ref{De-non-div}:

\begin{thm} \label{step-only-divisible-lower}
Every $\xi\leq\phi$ is a diagonal on $\phi$ in the quantale $\De_*$ if, and only if, $\phi$ is a one-step function.
\end{thm}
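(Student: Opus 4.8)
The ``if'' direction is immediate from Proposition \ref{step-divisible}: if $\phi=\ka_{p,a}$, then every $\xi\le\phi$ is divisible by $\ka_{p,a}$ and hence is a diagonal on $\phi$.

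For the ``only if'' direction I argue by contraposition: assuming $\phi$ is not a one-step function, I construct some $\xi\in\De$ with $\xi\le\phi$ that is not divisible by $\phi$, so that $\xi$ fails to be a diagonal on $\phi$. First I recast the hypothesis. Since $\phi$ is monotone, left-continuous and $\phi(0)=0$, it is a one-step function precisely when it attains no value strictly between $0$ and $\phi(\infty)$; in terms of the right adjoint $\phi^{\flat}$, setting $p_1:=\phi^{\flat}(0)=\bv\{t:\phi(t)=0\}$ and $p^*:=\bv\{t:\phi(t)<\phi(\infty)\}$, this is the condition $p_1=p^*$. Hence, when $\phi$ is not one-step (we may assume $\phi\ne\ka_{0,0}$, so that $\phi(\infty)>0$), the interval $(p_1,p^*)$ is non-empty; I fix a finite $p_2$ in it and set
$$\xi:=\phi\wedge\ka_{p_2,1}\in\De,\qquad\xi(t)=\begin{cases}0,&t\le p_2,\\ \phi(t),&t>p_2.\end{cases}$$
Then $\xi\le\phi$, with $\xi<\phi$ on $(p_1,p_2]$ (where $\phi>0$), and moreover $\phi(p_2)>0$ while $\phi(p_2^+):=\bw_{t>p_2}\phi(t)<\phi(\infty)$ by the choice of $p_2$.

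It remains to show $\xi$ is not divisible by $\phi$. Suppose, toward a contradiction, $\xi=\phi\otimes_*\psi$ for some $\psi\in\De$ (equivalently, by Theorem \ref{dd-div}, that $\xi$ satisfies the identity displayed there). Two requirements squeeze $\psi$ from opposite sides. From $(\phi\otimes_*\psi)(t)=\xi(t)=0$ on $(p_1,p_2]$, letting the argument shrink to a point from the right and using the continuity of $*$, one gets $\phi(p_2)*\psi(0^+)=0$, hence
$$\psi(0^+)\le\phi(p_2)\ra_*0<1$$
(strictly, since $\phi(p_2)>0$); in particular $\psi\ne\ka_{0,1}$. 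On the other hand $(\phi\otimes_*\psi)(t)=\xi(t)=\phi(t)$ for every $t>p_2$, and since $\phi$ rises from $\phi(p_2^+)<\phi(\infty)$ to $\phi(\infty)$ on $(p_2,\infty]$, this coincidence forces $\psi$, near $0$, to be at least as large as the top of the $*$-summand in which the relevant (large) values of $\phi$ sit: at a place where $\phi$ genuinely increases --- it jumps, or is strictly increasing, or one passes to the limit $t\to\infty$ --- a term $\phi(t)*\psi(t-r)$ must reach $\phi(t)$, which on a summand of product or {\L}ukasiewicz type pushes $\psi(t-r)$ up to that summand's top. When $\phi(\infty)$ is not an idempotent of $*$ this bound is strictly larger than $\phi(p_2)\ra_*0$, contradicting the previous paragraph. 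When $\phi(\infty)$ is an idempotent the forcing degenerates, and one works with the largest summand $[\alpha,\phi(\infty)]$ just below $\phi(\infty)$: if it is of product or {\L}ukasiewicz type and $\phi$ meets its interior the same forcing applies one level down, while if it is an idempotent (minimum) block one uses the right-adjoint description of Theorem \ref{dd-div-minimum} on the relevant final interval, observing that divisibility would require $\xi^{\flat}=\phi^{\flat}+\eta^{\flat}$ for some $\eta\in\De$, whereas $\xi^{\flat}-\phi^{\flat}$ --- equal to $p_2-\phi^{\flat}$ near $0$ and to $0$ near $\phi(\infty)$ --- is not monotone, unlike any $\eta^{\flat}$. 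In every case $\xi$ is not divisible by $\phi$, which establishes the contrapositive.

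The main obstacle is exactly this last step: making the ``$\xi=\phi$ beyond $p_2$'' constraint bite uniformly over all continuous t-norms. The product/{\L}ukasiewicz forcing ``$\psi$ large near $0$'' degenerates precisely on the idempotent (minimum-like) parts of $*$, where one must switch to the $\flat$-description; and for a general ordinal sum one has to track which summand each relevant value of $\phi$, and $\phi(\infty)$ itself, lies in, refining the choice of $p_2$ --- and, if needed, replacing the capping level $0$ in $\xi=\phi\wedge\ka_{p_2,1}$ by a suitable value inside the summand being exploited --- so that a single $\xi$ witnesses non-divisibility in all cases. This bookkeeping, rather than any individual estimate, is where the length and difficulty of the proof reside.
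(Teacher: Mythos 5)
Your ``if'' direction and your overall plan coincide with the paper's: argue by contraposition and exhibit a non-divisible $\xi$ obtained by truncating $\phi$ to $0$ below some point and leaving it equal to $\phi$ above; indeed the paper's witness is exactly of your form $\xi=\phi\wedge\ka_{q,1}$. The genuine gap is that the heart of the matter --- proving that such a $\xi$ is not of the form $\phi\otimes_*\psi$ for an \emph{arbitrary} continuous t-norm --- is only sketched, and the sketch as stated does not go through. First, ``a term $\phi(t)*\psi(t-r)$ must reach $\phi(t)$'' is not a valid step: the supremum defining the convolution need not be attained, and the correct argument (as in the paper) splits the convolution at the truncation point $q$, deduces $\phi(t)\leq\phi(t)*\psi(t-q)$ (resp.\ $a_N<\psi(t-q)$) for all $t>q$, and then \emph{passes to the infimum over $t>q$ using the continuity of $*$} to obtain $a_0\leq a_0*\bigl(\bw_{t>0}\psi(t)\bigr)$; only then does the Archimedean structure of the block containing $a_0$ force $\psi\geq c$ on $(0,\infty]$. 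Second, your case split on whether $\phi(\infty)$ is idempotent is not the right dichotomy, and your fallback cases are not secured: the ``largest summand $[\alpha,\phi(\infty)]$ just below $\phi(\infty)$'' need not exist (idempotents of $*$ may accumulate at $\phi(\infty)$ from below, or an entire interval below $\phi(\infty)$ may consist of idempotents), and the appeal to Theorem~\ref{dd-div-minimum} ``on the relevant final interval'' is unjustified, since that theorem is proved only for the global quantale $\De_{\wedge}$ and transferring it to an idempotent block of a general ordinal sum is itself a claim requiring proof. The paper's dichotomy is instead whether idempotents accumulate at $\phi(\infty)$ from below: if so, it picks an idempotent $a_N\in(\phi(p),\phi(\infty))$; if not, Lemma~\ref{t-norm-rep} yields an Archimedean block $(b,c]\ni\phi(\infty)$ and a threshold $a\in(b\vee\phi(p),\phi(\infty))$.

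This calibration is not cosmetic: the truncation point must be taken as $q=\inf\{t\mid\phi(t)>a\}$ for the chosen threshold $a$, so that $\phi\leq a$ below $q$ and $\phi>a$ above $q$. That is what makes the two constraints on $\psi$ collide --- ``$\psi>a_N$ (resp.\ $\psi\geq c$) on $(0,\infty]$'' against $0=\xi(q)\geq\phi(q)*a_N\geq\phi(p)*a_N=\phi(p)\wedge a_N=\phi(p)>0$, using idempotency of $a_N$ (resp.\ $c$). With your arbitrary $p_2\in(p_1,p^*)$ and capping level $0$, your first constraint $\psi(0^+)\leq\phi(p_2)\ra_*0$ and your intended ``$\psi$ large near $0$'' need not be played off against each other without exactly this kind of block-by-block choice; your closing paragraph concedes that this choice (of $p_2$ and of the capping level) still has to be made case by case, but that bookkeeping \emph{is} the proof, and it is what the paper's two-case argument supplies.
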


%\begin{proof}
%Suppose that $\phi$ is not a step function, then there exist $b<d$ in $(0,\infty]$ with $0<\phi(b)<\phi(d)$. By left-continuity one finds $a,c\in(0,\infty)$ with
%$$0<a<b\quad\text{and}\quad\max\{b,d-b+a\}<c<d$$
%such that $0<\phi(t)\leq\phi(b)$ for all $t\in(a,b]$ and $\phi(b)<\phi(t)\leq\phi(d)$ for all $t\in(c,d]$. Now define $\xi\in\De$ with
%$$\xi(t)=\begin{cases}
%0 & \text{if}\ t\leq c,\\
%\phi(t) & \text{if}\ t>c.
%\end{cases}$$
%Then $\xi<\phi$, but there is no $\psi\in\De$ with $\xi=\phi\otimes_*\psi$. Indeed, if such $\psi$ exists, then
%$$\xi(d)=\bv_{s<d}\phi(d-s)*\psi(s)=\phi(d).$$
%Since $\phi(d-s)=\phi(d)$ only if $0\leq s<d-b$, the monotonicity of $\psi$ forces the existence of $r\in(0,d-b]$ with $\psi(r)\geq\phi(d)$, and consequently $\psi(s)\geq\phi(d)$ for all $s\geq r$. Since $c-d+b>a$, it follows that
%$$0=\xi(c)=\bv_{s<c}\phi(c-s)*\psi(s)\geq\phi(c-d+b)*\psi(d-b)\geq\phi(c-d+b)*\phi(d){\color{red}>0},$$
%giving a contradiction.
%\end{proof}

To prove this theorem we need the following consequence of the well-known representation theorem of continuous t-norms:

\begin{lem} \label{t-norm-rep} (See \cite{Klement2000,Klement2004b,Mostert1957}.)
Let $*$ be a continuous t-norm on $[0,1]$. Then the set of non-idempotent elements of $*$ in $[0,1]$ is a union of countably many pairwise disjoint open intervals $$\{(b_i,c_i)\mid 0<b_i<c_i<1,\ i\in I,\ I\ \text{is countable}\},$$
and for each $i\in I$, the quantale $([a_i,b_i],*,b_i)$ obtained by restricting $*$ on $[a_i,b_i]$ is either isomorphic to the product t-norm $[0,1]_{\times}$ or isomorphic to the {\L}ukasiewicz t-norm $[0,1]_{\oplus}$.
%If $a\in(0,1)$ is non-idempotent, then there exist idempotent elements $a_-,a_+\in[0,1]$ such that $a_-<a<a_+$ and that the quantale $([a_-,a_+],*,a_+)$ is either isomorphic to $[0,1]_{\times}$ or to $[0,1]_{\oplus}$. In particular, if $*$ has no non-trivial idempotent elements, then the quantale $[0,1]_*$ is either isomorphic to $[0,1]_{\times}$ or to $[0,1]_{\oplus}$.
\end{lem}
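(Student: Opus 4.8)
The plan is to recognize the statement as the classical Mostert--Shields ordinal-sum decomposition of continuous t-norms and to reduce everything to the Archimedean case. First I would record the set of idempotents $E=\{a\in[0,1]\mid a*a=a\}$. Since $*$ is continuous, $E$ is closed: if $a_n\in E$ and $a_n\to a$, then $a*a=\lim_n a_n*a_n=\lim_n a_n=a$. As $0,1\in E$, the complement $[0,1]\setminus E$ is open and contained in $(0,1)$, hence a countable disjoint union of open intervals; this is exactly the family $\{(b_i,c_i)\mid i\in I\}$ of the statement, and its endpoints lie in $E$. The elementary tool I would prove next is that every idempotent acts as a meet: for $e\in E$, $x*e=\min(x,e)$. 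For $x\geq e$ this follows from $e=e*e\leq x*e\leq 1*e=e$. For $x\leq e$, continuity forces $t\mapsto t*e$ to map $[0,e]$ onto $[0,e]$ (it sends $0$ to $0$ and $e$ to $e$ and is dominated by the identity), so each $y\in[0,e]$ equals $t*e$ for some $t$, whence $e*y=e*(t*e)=(e*e)*t=e*t=y$ by idempotency and associativity.

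With these in hand I would show that the closure $[b_i,c_i]$ of each component carries a continuous Archimedean t-norm. Closure under $*$ comes from monotonicity together with $b_i*b_i=b_i$ and $c_i*c_i=c_i$, giving $b_i\leq x*y\leq c_i$ for $x,y\in[b_i,c_i]$; and the meet-lemma makes $c_i$ the unit of the restriction, since $c_i*x=\min(c_i,x)=x$ for $x\leq c_i$. Transporting an order-isomorphism $[b_i,c_i]\cong[0,1]$ turns this restriction into a continuous t-norm $*_i$ whose only idempotents are the two endpoints, because $(b_i,c_i)$ meets $E$ only at its boundary. Such a t-norm is Archimedean: for $a\in(0,1)$ the decreasing limit $\ell=\lim_n a^{n}$ satisfies $\ell*\ell=\ell$ by continuity, so $\ell\in\{0,1\}$, forcing $\ell=0$, which is precisely the Archimedean condition.

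Finally I would invoke the representation of continuous Archimedean t-norms by additive generators: every such $*_i$ admits a continuous strictly decreasing $f\colon[0,1]\to[0,\infty]$ with $f(1)=0$ and $x*_i y=f^{-1}(\min\{f(x)+f(y),f(0)\})$. When $f(0)=\infty$ the t-norm is strict and $f$ realizes an isomorphism $*_i\cong[0,1]_{\times}$ (equivalently $[0,\infty]_+$); when $f(0)<\infty$ it is nilpotent and the normalized generator realizes $*_i\cong[0,1]_{\oplus}$. This yields the second assertion for every $i\in I$, completing the proof modulo the generator construction. The genuine obstacle is exactly that construction of $f$ from associativity, commutativity, continuity and cancellation alone: it is the analytic heart of the theorem (the one-parameter-subsemigroup argument of Mostert--Shields, or Acz\'el's functional-equation solution), and it is for this step that I would lean on the cited references \cite{Klement2000,Klement2004b,Mostert1957} rather than reproduce the full topological-semigroup machinery.
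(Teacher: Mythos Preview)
Your sketch is correct and follows the standard Mostert--Shields route. Note, however, that the paper does not actually prove this lemma: it is stated with the attribution ``(See \cite{Klement2000,Klement2004b,Mostert1957})'' and no proof is given, the result being invoked only as a tool in the subsequent proof of Theorem~\ref{step-only-divisible-lower}. So there is nothing to compare your argument against beyond the cited literature, and your outline is precisely the classical argument those references contain: closedness of the idempotent set, the meet-behaviour of idempotents, the induced Archimedean t-norm on each closed component, and the dichotomy strict/nilpotent via additive generators. Your honest flagging of the generator construction as the step requiring the cited sources is appropriate.
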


\begin{proof}[The proof of Theorem \ref{step-only-divisible-lower}]
Suppose that $\phi$ is not a one-step function, then there exists $p\in(0,\infty)$ with $0<\phi(p)<\phi(\infty)$. We proceed with two cases.

{\bf Case 1.} There exists a strictly increasing sequence $\{a_n\}$ in $[0,1]$ such that each $a_n$ is an idempotent element of $*$ and that $\lim\limits_{n\ra\infty}a_n=\phi(\infty)$. In this case, one may find a positive integer $N$ with $a_N\in(\phi(p),\phi(\infty))$. Note that the set
$$\{t\in(0,\infty)\mid\phi(t)>a_N\}$$
is non-empty by applying the left-continuity of $\phi$ to the point $\infty$, and it has $p$ as a lower bound since $\phi$ is monotone. Thus it makes sense to define
$$q:=\inf\{t\in(0,\infty)\mid\phi(t)>a_N\}.$$
Then $q\in[p,\infty)$, and the left-continuity of $\phi$ guarantees that $\phi(q)\leq a_N$. Hence, $\phi(t)\leq a_N$ for all $t\leq q$ and $a_N<\phi(t)\leq\phi(\infty)$ for all $t>q$. Now define $\xi\in\De$ with
$$\xi(t):=\begin{cases}
0 & \text{if}\ t\leq q,\\
\phi(t) & \text{if}\ t>q.
\end{cases}$$
Then $\xi<\phi$, but there is no $\psi\in\De$ with $\xi=\phi\otimes_*\psi$. Indeed, if such $\psi$ exists, then for every $t>q$,
$$\bv_{s<t}\phi(t-s)*\psi(s)=\xi(t)=\phi(t)>a_N.$$
But $\phi(t-s)\leq a_N$ if $t-s\leq q$, the above inequality then implies
$$a_N<\bv_{s<t-q}\phi(t-s)*\psi(s)\leq\bv_{s<t-q}\psi(s)=\psi(t-q)$$
for all $t>q$; that is, $\psi(t)>a_N$ for all $t>0$. It follows that
$$0=\xi(q)=\bv_{s<q}\phi(q-s)*\psi(s)\geq\bv_{0<s<q}\phi(q-s)*a_N=\phi(q)*a_N\geq\phi(p)*a_N=\phi(p)\wedge a_N=\phi(p)>0,$$
where the penultimate equality follows from the idempotency of $a_N$, giving a contradiction.

{\bf Case 2.} There exists no strictly increasing sequence in $[0,1]$ consisting of idempotent elements of $*$ that approaches to $\phi(\infty)$. In this case, by Lemma \ref{t-norm-rep} one may find idempotent elements $b,c$ of $*$ such that $\phi(\infty)\in(b,c]\subseteq[0,1]$ and that the quantale $([b,c],*,c)$ is isomorphic to $[0,1]_{\times}$ or to $[0,1]_{\oplus}$. Let
$$a:=\dfrac{(b\vee\phi(p))+\phi(\infty)}{2}\in(b\vee\phi(p),\ \phi(\infty)),$$
and let
$$q:=\inf\{t\in(0,\infty)\mid\phi(t)>a\}\in[p,\infty)$$
similarly as in Case 1. Then $\phi(t)\leq a$ for all $t\leq q$ and $a<\phi(t)\leq\phi(\infty)$ for all $t>q$. Now define $\xi\in\De$ with
$$\xi(t):=\begin{cases}
0 & \text{if}\ t\leq q,\\
\phi(t) & \text{if}\ t>q.
\end{cases}$$
Then $\xi<\phi$, but there is no $\psi\in\De$ with $\xi=\phi\otimes_*\psi$. Indeed, if such $\psi$ exists, write
$$a_0:=\bw_{s>q}\phi(s),$$
then for every $t>q$,
$$\bv_{s<t}\phi(t-s)*\psi(s)=\xi(t)=\phi(t)\geq a_0\geq a,$$
where at least one of the last two inequalities is strict. But $\phi(t-s)\leq a$ if $t-s\leq q$, the above inequality then implies
$$a_0\leq\bv_{s<t-q}\phi(t-s)*\psi(s)\leq\bv_{s<t-q}\phi(t)*\psi(s)=\phi(t)*\Big(\bv_{s<t-q}\psi(s)\Big)=\phi(t)*\psi(t-q)$$
for all $t>q$, and consequently
$$a_0\leq\bw_{t>q}\phi(t)*\psi(t-q)=\Big(\bw_{t>q}\phi(t)\Big)*\Big(\bw_{t>q}\psi(t-q)\Big)=a_0*\Big(\bw_{t>q}\psi(t-q)\Big),$$
where the first equality follows from the continuity of $*$. Since $a_0\in[a,c]\subseteq(b,c]$ and the quantale $([b,c],*,c)$ is either isomorphic to $[0,1]_{\times}$ or isomorphic to $[0,1]_{\oplus}$, the above inequality holds only if
$$\bw_{t>0}\psi(t)=\bw_{t>q}\psi(t-q)\geq c;$$
that is, $\psi(t)\geq c$ for all $t>0$. Since $c$ is idempotent, a contradiction arises from
$$0=\xi(q)=\bv_{s<q}\phi(q-s)*\psi(s)\geq\bv_{0<s<q}\phi(q-s)*c =\phi(q)*c\geq\phi(p)*c=\phi(p)\wedge c=\phi(p)>0,$$
which completes the proof.
\end{proof}

For $\phi,\psi\in\De$, since it is easy to extract the condition for $\phi\wedge\psi$ to be a one-step function, the following corollary is an immediate consequence of Theorem \ref{step-only-divisible-lower}:

\begin{cor}
Every $\xi\leq\phi\wedge\psi$ is a diagonal between $\phi$ and $\psi$ in the quantale $\De_*$ if, and only if, there exists $p\in[0,\infty)$, $a\in[0,1]$ such that
\begin{enumerate}[label={\rm(\arabic*)}]
\item either $\phi$ or $\psi$ is constant on $[0,p]$ with value $0$, and
\item either $\phi$ or $\psi$ is constant on $(p,\infty]$ with value $a$.
\end{enumerate}
\end{cor}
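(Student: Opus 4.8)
The plan is to prove the corollary by first reformulating the condition ``every $\xi \leq \phi \wedge \psi$ is a diagonal between $\phi$ and $\psi$'' in terms of the previously established machinery. Since $\xi : \phi \rqa_* \psi$ is a diagonal if and only if $\xi$ is simultaneously divisible by $\phi$ and $\psi$, and since Theorem \ref{step-only-divisible-lower} tells us that \emph{every} $\xi \leq \eta$ is divisible by $\eta$ precisely when $\eta$ is a one-step function, the natural first move is to relate divisibility by $\phi$ and $\psi$ to divisibility by $\phi \wedge \psi$. The key observation to pin down is that ``every $\xi \leq \phi \wedge \psi$ is a diagonal between $\phi$ and $\psi$'' should be equivalent to ``every $\xi \leq \phi \wedge \psi$ is a diagonal on $\phi \wedge \psi$'', i.e.\ to $\phi \wedge \psi$ being a one-step function by Theorem \ref{step-only-divisible-lower}. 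One direction is clear: if $\phi \wedge \psi$ is a one-step function $\ka_{p,a}$, then by Proposition \ref{step-divisible} every $\xi \leq \ka_{p,a}$ is divisible by $\ka_{p,a}$; but we need divisibility by $\phi$ and by $\psi$ separately. For this I would use that if $\xi = \ka_{p,a} \otimes_* \theta$ with $\ka_{p,a} = \phi \wedge \psi \leq \phi$, then one can still write $\xi$ as a convolution with $\phi$ — here one exploits that $\phi$ agrees with $\ka_{p,a}$ on $[0,p]$ in at least the relevant sense, or more directly one invokes that $\ka_{p,a}$ being divisible by both $\phi$ and $\psi$ (being a diagonal on each, as $\ka_{p,a} \leq \phi$ and $\ka_{p,a}\le\psi$ — wait, this needs $\ka_{p,a}$ divisible by $\phi$, which is \emph{not} automatic) forces the structure.

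A cleaner route, and the one I would actually pursue, is the following. For the ``only if'' direction, suppose every $\xi \leq \phi \wedge \psi$ is a diagonal between $\phi$ and $\psi$. In particular every such $\xi$ is divisible by $\phi$, hence a diagonal on $\phi$; but if $\xi \leq \phi \wedge \psi \leq \phi$ then $\xi$ ranges over all of $\da(\phi \wedge \psi)$, which need not be all of $\da\phi$, so Theorem \ref{step-only-divisible-lower} does not apply directly to $\phi$. Instead I would argue: if neither $\phi$ nor $\psi$ has the stated ``flat on $[0,p]$ then flat on $(p,\infty]$'' form for a common $p$, then $\phi \wedge \psi$ is not a one-step function, and I would reproduce (or cite) the construction in the proof of Theorem \ref{step-only-divisible-lower} applied to $\eta := \phi \wedge \psi$: one builds $\xi \leq \eta$ that is not divisible by $\eta$. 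Then I must upgrade ``not divisible by $\eta = \phi\wedge\psi$'' to ``not a diagonal between $\phi$ and $\psi$''. This is where the identity $\DQ(\phi,\psi) \subseteq \da(\phi\wedge\psi)$ combined with the specific failure is used: the witness $\xi$ constructed there has a point $q$ with $\xi(q) = 0$ but any candidate $\psi'$ with $\phi \otimes_* \psi' = \xi$ (or $\psi \otimes_* \psi' = \xi$) would force $\psi'(t) \geq c > 0$ for small $t$, yielding $\xi(q) \geq \phi(p) * c = \phi(p) \wedge c > 0$ — and the same contradiction goes through with $\phi$ replaced by $\psi$ because the crucial inequalities only used monotonicity and the values $\phi(p), \phi(\infty)$, which can be arranged symmetrically. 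So the honest statement is that the construction witnesses non-divisibility by \emph{whichever} of $\phi, \psi$ fails the condition, hence $\xi$ is not a diagonal between them.

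For the ``if'' direction, suppose conditions (1) and (2) hold for some $p, a$; I claim $\phi \wedge \psi = \ka_{p,a}$. Indeed on $[0,p]$ one of them is $0$, so the meet is $0$ there; on $(p,\infty]$ one of them is the constant $a$, so the meet is $\leq a$ there, and since each distance distribution is $\leq$ its supremum... actually I need the meet to \emph{equal} $a$ on $(p,\infty]$, which requires the \emph{other} function to be $\geq a$ on $(p,\infty]$; this is where I must be careful — the corollary as stated may implicitly assume $\xi$ ranges over $\da(\phi\wedge\psi)$ and the claim is only that each such $\xi$ is a diagonal, not that $\phi\wedge\psi$ is literally $\ka_{p,a}$. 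So more precisely: from (1) and (2), $\phi \wedge \psi \leq \ka_{p,a}$, i.e.\ $\phi\wedge\psi$ is $0$ on $[0,p]$ and $\leq a$ on $(p,\infty]$, hence $\phi \wedge \psi = \ka_{p,b}$ for some $b \leq a$, a one-step function. Then Theorem \ref{step-only-divisible-lower} gives that every $\xi \leq \phi\wedge\psi = \ka_{p,b}$ is a diagonal on $\phi\wedge\psi$; and one more time I need every such $\xi$ to be a diagonal on $\phi$ and on $\psi$ separately. For this last step I would show: if $\eta := \phi\wedge\psi$ is a one-step function and $\eta \leq \phi$, then divisibility by $\eta$ implies divisibility by $\phi$ — concretely, given $\xi = \eta \otimes_* \theta$, the proof of Proposition \ref{step-divisible} actually produces $\theta = (a \ra_* \xi(p + \cdot))^-$, and re-running the computation $(\phi \otimes_* \theta^-)(t) = \bigvee_{s \le t}\phi(s)*(a\ra_*\xi(p+t-s))^-$ shows it still collapses to $\xi(t)$ using only that $\phi(s) \le a$ for $s \le p$ (from (1) applied to $\phi$, or: if (1) is witnessed by $\psi$ instead, swap roles). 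I expect \textbf{this last matching step} — transferring divisibility by the one-step meet to divisibility by each of $\phi$ and $\psi$ individually, keeping careful track of which of the two functions actually realizes conditions (1) and (2) — to be the main obstacle, since it is exactly the point where the asymmetry between ``$\DQ(\phi,\psi)$'' and ``diagonals on $\phi\wedge\psi$'' could bite; everything else is bookkeeping on top of Theorem \ref{step-only-divisible-lower} and Proposition \ref{step-divisible}.
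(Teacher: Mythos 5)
Your own closing sentence is the right diagnosis, but the situation is worse than ``the main obstacle'': the transfer step you flag cannot be carried out, because the corollary as printed is false, and your proposed repair of the ``if'' direction breaks exactly there. Take $*=\wedge$, $\phi(t)=\min\{t,1\}$ and $\psi=\ka_{1/2,1/2}$. Conditions (1) and (2) hold with $p=a=\tfrac12$ (both witnessed by $\psi$), and $\phi\wedge\psi=\ka_{1/2,1/2}$, yet $\xi:=\ka_{1/2,1/2}\leq\phi\wedge\psi$ is \emph{not} a diagonal between $\phi$ and $\psi$, since it is not divisible by $\phi$: if $\phi\otimes_{\wedge}\theta=\xi$, then evaluating at $t\in\big(\tfrac12,1\big)$ forces $\theta(u)\geq\tfrac12$ for all $u>0$ (any summand close to $\tfrac12$ needs $\phi(r)>\tfrac12-\delta$, hence $r>\tfrac12-\delta$ and $s<t-\tfrac12+\delta$), and then $(\phi\otimes_{\wedge}\theta)\big(\tfrac12\big)\geq\min\big\{\phi\big(\tfrac12-u\big),\tfrac12\big\}>0=\xi\big(\tfrac12\big)$, a contradiction. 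Note that here $\phi(s)\leq a$ for all $s\leq p$, so your claim that the computation of Proposition \ref{step-divisible} ``still collapses to $\xi(t)$ using only that $\phi(s)\leq a$ for $s\leq p$'' is false: that collapse uses both that $\ka_{p,a}$ \emph{vanishes} on $[0,p]$ (so only summands with $s>p$ survive) and that it equals $a$ \emph{exactly} on $(p,\infty]$ (so $a*(a\ra_*\xi(\cdot))=\xi(\cdot)$ with no overshoot); for a general $\phi\geq\phi\wedge\psi$ the convolution picks up extra mass from $(0,p]$ (and, for non-idempotent $*$, overshoots on $(p,\infty]$), while the largest admissible $\theta=\phi\Ra_*\xi$ undershoots. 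For comparison, the paper's own one-sentence derivation makes the same silent identification of ``diagonal between $\phi$ and $\psi$'' (divisibility by each of $\phi$ and $\psi$) with ``diagonal on $\phi\wedge\psi$'', so what you isolated as the hard step is in fact a genuine defect of the statement, not something Theorem \ref{step-only-divisible-lower} or Proposition \ref{step-divisible} can supply.

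Your ``only if'' half is also incomplete as written, though it is the salvageable direction. Re-running the construction in the proof of Theorem \ref{step-only-divisible-lower} on $\eta:=\phi\wedge\psi$ produces $\xi\leq\eta$ not divisible by $\eta$, but the contradiction there uses the bound $\eta(r)\leq a_N$ (resp.\ $\leq a$) for \emph{all} $r\leq q$, which holds for the pointwise minimum and need not hold for either $\phi$ or $\psi$ individually; so non-divisibility by $\eta$ does not transfer to non-divisibility by ``whichever of $\phi,\psi$ fails the condition'' in the way you assert. A workable route is to test the one-step functions $\ka_{q,b}\leq\phi\wedge\psi$ against $\phi$ and $\psi$ \emph{separately}: at least for $*=\wedge$, divisibility of all such $\ka_{q,b}$ by $\phi$ forces $\phi$ to vanish up to some point and then jump directly to a value $\geq b$, and letting $b$ increase to $\sup_{t<\infty}(\phi\wedge\psi)(t)$ for both $\phi$ and $\psi$ one deduces that $\phi\wedge\psi$ is a one-step function. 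That argument also shows what a corrected ``if'' hypothesis must look like: one needs jump/flatness conditions on \emph{each} of $\phi$ and $\psi$ (and, for non-idempotent t-norms such as the product, genuinely stronger constancy conditions), not merely that the meet is a one-step function.
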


%\section{Towards probabilistic partial metric spaces} \label{ProbParMet}

With Theorem \ref{dd-div} and Lemma \ref{dd-diagonal-comp} it is now possible to characterize $\sD\De_*$-categories through Proposition \ref{integral-DQCat}, which are precisely probabilistic partial metric spaces:

\begin{defn} \label{ProbPM}
A \emph{(generalized) probabilistic partial metric space} w.r.t. a continuous t-norm $*$ is a set $X$ equipped with a map
$$\al:X\times X\to\De,$$
called the \emph{probabilistic partial distance function}, satisfying the following conditions, for all $x,y,z\in X$:
\begin{enumerate}[label=(ProbPM\arabic*),leftmargin=5.2em]
\item \label{ProbPM:r}
    $\al(x,y)$ is a diagonal between $\al(x,x)$ and $\al(y,y)$ in the quantale $\De_*$; that is,
    \begin{align*}
    \al(x,y)(t)&=\bv_{s<t}\bw_{q>0}\al(x,x)(t-s)*(\al(x,x)(q)\ra_*\al(x,y)(q+s))\\
    &=\bv_{s<t}\bw_{q>0}\al(y,y)(t-s)*(\al(y,y)(q)\ra_*\al(x,y)(q+s))
    \end{align*}
    for all $t\in[0,\infty)$.
\item \label{ProbPM:t}
    $\al(y,z)\otimes_*(\al(y,y)\Ra_*\al(x,y))\leq\al(x,z)$; that is,
    $$\bw_{q>0}\al(y,z)(r)*(\al(y,y)(q)\ra_*\al(x,y)(q+s))\leq\al(x,z)(r+s)$$
    for all $r,s\in[0,\infty)$.
\end{enumerate}
\end{defn}

With $\sD\De_*$-functors $f:(X,\al)\to(Y,\be)$ as morphisms, i.e., maps $f:X\to Y$ with
$$\al(x,x)(t)=\be(fx,fx)(t)\quad\text{and}\quad\al(x,y)(t)\leq\be(fx,fy)(t)$$
for all $x,y\in X$, $t\in[0,\infty]$, one obtains the category
$$\ProbParMet_*:=\sD\De_*\text{-}\Cat,$$
which contains the category $\ProbMet_*=\De_*\text{-}\Cat$ of probabilistic metric spaces as a full subcategory.

\begin{rem}
Similar to Remarks \ref{ParMet-classical} and \ref{ProbMet-classical}, it makes sense to say that a probabilistic partial metric space $(X,\al)$ is
\begin{enumerate}[label=(ProbPM\arabic*),start=3,leftmargin=5.2em]
\item \label{ProbPM:sym}
    \emph{symmetric}, if $\al(x,y)=\al(y,x)$ for all $x,y\in X$;
\item \label{ProbPM:f}
    \emph{finitary}, if $\al(x,y)(\infty)=1$ for all $x,y\in X$;
\item \label{ProbPM:sep}
    \emph{separated}, if $\al(x,x)=\al(y,y)=\al(x,y)=\al(y,x)$ implies $x=y$.
\end{enumerate}
\end{rem}

\appendices

\section{Appendix: $\sQ$-categories vs. $\DQ$-categories}

It is easy to observe the following fact, where the coreflector sends each $\DQ$-category $(X,\al)$ to the set
$$X_1:=\{x\in X\mid\al(x,x)=1\}$$
equipped with the $\sQ$-category structure inherited from $(X,\al)$:

\begin{prop} \label{QCat-coref-DQCat}
$\sQCat$ is a full coreflective subcategory of $\DQCat$. In particular, $\Met$ (resp. $\ProbMet_*$) is a full coreflective subcategory of $\ParMet$ (resp. $\ProbParMet_*$).
\end{prop}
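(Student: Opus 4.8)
The plan is to realize $\sQCat$ as a full subcategory of $\DQCat$ in the evident way and then to exhibit an explicit right adjoint to the inclusion; recall that ``coreflective'' means precisely that the inclusion admits such a right adjoint, and ``full'' that the inclusion functor is full. First I would check that sending a $\sQ$-category $(X,\al)$ to the datum $(X,|\text{-}|\equiv 1,\al)$ defines a fully faithful, injective-on-objects functor $I:\sQCat\to\DQCat$. Since $\sQ$ is integral, every element of $\sQ$ is divisible by $1$, so each $\al(x,y)$ lies in $\DQ(1,1)$; the unit inequality reduces to $1\leq\al(x,x)$, which holds because $\al(x,x)=1$ in any $\sQ$-category; and, using $1\ra d=1\with d=d$, the composition inequality for $\DQ$-categories in Proposition \ref{integral-DQCat} collapses to $\al(y,z)\with\al(x,y)\leq\al(x,z)$. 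A $\DQ$-functor between objects in the image of $I$ is literally a $\sQ$-functor, so the image of $I$ is a full subcategory of $\DQCat$, and by Proposition \ref{integral-DQCat} it consists exactly of the $\DQ$-categories with $\al(x,x)=1$ (equivalently $|x|=1$) for every $x$.

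Next I would introduce the candidate coreflector $R:\DQCat\to\sQCat$, sending $(X,\al)$ to $(X_1,\al|_{X_1})$, where $X_1:=\{x\in X\mid\al(x,x)=1\}$ and $\al|_{X_1}$ denotes the restriction of $\al$ to $X_1\times X_1$. This is a $\sQ$-category: the self-distances are $1$ by construction, and restricting the composition inequality of Proposition \ref{integral-DQCat} to $x,y,z\in X_1$ and using $\al(y,y)\ra\al(y,z)=1\ra\al(y,z)=\al(y,z)$ yields $\al(y,z)\with\al(x,y)\leq\al(x,z)$. A $\DQ$-functor $f:(X,\al)\to(X',\al')$ carries $X_1$ into $X_1'$, since for $x\in X_1$ Proposition \ref{integral-DQCat} gives $\al'(fx,fx)=|fx|=|x|=\al(x,x)=1$, and the resulting restriction is a $\sQ$-functor because $\al|_{X_1}(x,y)=\al(x,y)\leq\al'(fx,fy)$. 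So $R$ is a well-defined functor.

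It then remains to verify the adjunction $I\dashv R$. I would take its counit at $(X,\al)$ to be the inclusion $\DQ$-functor $\iota:I(X_1,\al|_{X_1})\hookrightarrow(X,\al)$, which is well defined because the type of $x\in X_1$ equals $\al(x,x)=1$ computed in either category, and which is fully faithful. For the universal property, let $(Y,\be)$ be a $\sQ$-category and $g:I(Y,\be)\to(X,\al)$ a $\DQ$-functor; integrality forces $\be(y,y)=1$, whence $\al(gy,gy)=|gy|=|y|=1$, so $g$ takes values in $X_1$ as a map of sets, the induced map $\bar g:(Y,\be)\to(X_1,\al|_{X_1})$ is a $\sQ$-functor (as $\be(y,y')\leq\al(gy,gy')=\al|_{X_1}(\bar gy,\bar gy')$), and $\bar g$ is the unique factorization of $g$ through $\iota$ since $\iota$ is injective on points. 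Naturality of $\iota$ in $(X,\al)$ is immediate. This proves that $\sQCat$ is a full coreflective subcategory of $\DQCat$, and the ``in particular'' clauses are the special cases $\sQ=[0,\infty]_+$ (so $\sQCat=\Met$, $\DQCat=\ParMet$, $X_1=\{x\mid\al(x,x)=0\}$) and $\sQ=\De_*$ (so $\sQCat=\ProbMet_*$, $\DQCat=\ProbParMet_*$, $X_1=\{x\mid\al(x,x)=\ka_{0,1}\}$).

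I do not expect a genuine obstacle here; the argument is soft. The only points requiring attention are the repeated appeals to the integrality of $\sQ$ --- once so that the $\DQ$-category axioms degenerate to the $\sQ$-category axioms when all types equal $1$, and again so that an arbitrary $\DQ$-functor out of a $\sQ$-category is forced to land in $X_1$ --- together with the systematic use of Proposition \ref{integral-DQCat}, namely that $\al(x,x)=|x|$ holds in every $\DQ$-category.
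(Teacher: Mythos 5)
Your proof is correct and follows exactly the route the paper intends: the paper merely sketches the result by naming the coreflector $(X,\al)\mapsto(X_1,\al|_{X_1})$ with $X_1=\{x\mid\al(x,x)=1\}$, and your argument supplies the routine verifications (the full embedding via constant type $1$, functoriality of the restriction, and the universal property of the inclusion as counit), all of which are sound.
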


The interaction between $\sQ$-categories and $\DQ$-categories is far more profound than Proposition \ref{QCat-coref-DQCat}. First note that there are \emph{lax homomorphisms} of quantaloids\footnote{A lax homomorphism $f:\CQ\to\CQ'$ of small quantaloids consists of a map $f:\CQ_0\to\CQ'_0$ between the object sets and monotone maps $f:\CQ(p,q)\to\CQ'(fp,fq)$ $(p,q\in\CQ_0)$ between hom-sets, such that
$$1_{fq}\leq f1_q\quad\text{and}\quad fv\circ fu\leq f(v\circ u)$$
for all $q\in\CQ_0$ and composable morphisms $u,v$ of $\CQ$.}
$$\Ff:\DQ\to\sQ,\quad(u:p\rqa q)\mapsto(p\ra u)\quad\text{and}\quad\Fb:\DQ\to\sQ,\quad(u:p\rqa q)\mapsto(q\ra u),$$
which induce two functors
$$\Gf:\DQCat\to\sQCat\quad\text{and}\quad\Gb:\DQCat\to\sQCat,$$
called respectively the \emph{forward globalization} and the \emph{backward globalization} functors \cite{Pu2012,Tao2014}. Explicitly, the forward globalization of a $\DQ$-category $(X,\al)$ is the $\sQ$-category $(X,\Gf\al)$ with
$$\Gf\al(x,y)=\al(x,x)\ra\al(x,y),$$
and the backward globalization of $(X,\al)$ is the $\sQ$-category $(X,\Gb\al)$ with
$$\Gb\al(x,y)=\al(y,y)\ra\al(x,y).$$

While considering $\sQ$ as a $\sQ$-category $(\sQ,\pi)$ with $\pi(p,q)=p\ra q$, one obtains a slice category $\sQCat/\sQ$ whose objects are $\sQ$-functors $f:(X,\al)\to(\sQ,\pi)$, and whose morphisms from $f:(X,\al)\to(\sQ,\pi)$ to $g:(Y,\be)\to(\sQ,\pi)$ are $\sQ$-functors $h:(X,\al)\to(Y,\be)$ making the diagram
\begin{equation} \label{QCat-over-Q}
\bfig
\Vtriangle<500,400>[(X,\al)`(Y,\be)`(\sQ,\pi);h`f`g]
\efig
\end{equation}
commutative. Note that each $\DQ$-category $(X,\al)$ induces a $\sQ$-functor $t_{\al}:(X,\Gf\al)\to(\sQ,\pi)$ with $t_{\al}x=\al(x,x)$; indeed, $t_{\al}$ is a $\sQ$-functor since Proposition \ref{integral-DQCat} implies
$$\Gf\al(x,y)=\al(x,x)\ra\al(x,y)\leq\al(x,x)\ra\al(y,y)=\pi(t_{\al}x,t_{\al}y)$$
for all $x,y\in X$. Moreover:

\begin{prop} \label{Gf-fully-faithful}
Let $(X,\al)$, $(Y,\be)$ be $\DQ$-categories.
\begin{enumerate}[label={\rm(\arabic*)}]
\item \label{Gf-fully-faithful:injective}
    $(X,\al)=(Y,\be)$ if, and only if, $t_{\al}=t_{\be}$.
\item \label{Gf-fully-faithful:ff}
$f:(X,\al)\to(Y,\be)$ is a $\DQ$-functor if, and only if, $f$ is a morphism from $t_{\al}:(X,\Gf\al)\to(\sQ,\pi)$ to $t_{\be}:(Y,\Gf\be)\to(\sQ,\pi)$ in $\sQCat/\sQ$.
\end{enumerate}
\end{prop}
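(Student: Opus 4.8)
The plan is to reduce both parts to the single structural fact, recorded in Proposition~\ref{integral-DQCat}, that in any $\DQ$-category $(X,\al)$ each diagonal $\al(x,y)$ is recovered from the pair $\big(\al(x,x),\Gf\al(x,y)\big)$ via
$\al(x,y)=\al(x,x)\with\big(\al(x,x)\ra\al(x,y)\big)=\al(x,x)\with\Gf\al(x,y)$,
together with the residuation $p\with q\leq r\iff q\leq p\ra r$ in $\sQ$. Since $\Gf\al$ is precisely the $\sQ$-category structure carried by the domain of $t_{\al}$ and $t_{\al}x=\al(x,x)$, the assignment $(X,\al)\mapsto t_{\al}$ discards no information, which yields~(1); and a short adjunction computation under a shared hypothesis yields~(2).

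For~(1): the ``only if'' direction is immediate. For ``if'', note that the equality $t_{\al}=t_{\be}$ of objects of $\sQCat/\sQ$ forces equality of their domain $\sQ$-categories, i.e. $(X,\Gf\al)=(Y,\Gf\be)$, hence $X=Y$ and $\Gf\al=\Gf\be$; and equality of the underlying maps gives $\al(x,x)=\be(x,x)$ for all $x$. Then for all $x,y\in X$, Proposition~\ref{integral-DQCat} yields
$\al(x,y)=\al(x,x)\with\Gf\al(x,y)=\be(x,x)\with\Gf\be(x,y)=\be(x,y)$,
so $(X,\al)=(Y,\be)$.

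For~(2): unravel both conditions on a map $f\colon X\to Y$. Being a $\DQ$-functor $(X,\al)\to(Y,\be)$ means $\al(x,x)=\be(fx,fx)$ and $\al(x,y)\leq\be(fx,fy)$ for all $x,y$. Being a morphism $t_{\al}\to t_{\be}$ in $\sQCat/\sQ$ means $f\colon(X,\Gf\al)\to(Y,\Gf\be)$ is a $\sQ$-functor, i.e. $\Gf\al(x,y)\leq\Gf\be(fx,fy)$ for all $x,y$, and $t_{\be}\circ f=t_{\al}$, i.e. $\be(fx,fx)=\al(x,x)$ for all $x$. Both contain the equation $\al(x,x)=\be(fx,fx)$; granting it we may rewrite $\Gf\be(fx,fy)=\be(fx,fx)\ra\be(fx,fy)=\al(x,x)\ra\be(fx,fy)$, and it then suffices to verify, for all $x,y$, the equivalence
$\al(x,y)\leq\be(fx,fy)\iff\al(x,x)\ra\al(x,y)\leq\al(x,x)\ra\be(fx,fy)$.
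Its forward implication follows by applying the monotone map $\al(x,x)\ra(-)$; its backward implication follows by applying $\al(x,x)\with(-)$ and then using, on the left, the divisibility identity $\al(x,x)\with\big(\al(x,x)\ra\al(x,y)\big)=\al(x,y)$ of Proposition~\ref{integral-DQCat} and, on the right, the counit inequality $\al(x,x)\with\big(\al(x,x)\ra\be(fx,fy)\big)\leq\be(fx,fy)$.

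No step hides a real obstacle; only two points deserve attention. In~(1), ``$t_{\al}=t_{\be}$'' must be read as equality of objects in the slice category $\sQCat/\sQ$: it therefore includes equality of the domain $\sQ$-categories, hence $\Gf\al=\Gf\be$, and not merely equality of underlying functions (which alone would be too weak to reconstruct $\al$). In~(2), one must use the shared equation $\al(x,x)=\be(fx,fx)$ to identify $\Gf\be(fx,fy)$ with $\al(x,x)\ra\be(fx,fy)$ before comparing, and must invoke the divisibility identity at the type $\al(x,x)$ when passing from the $\sQ$-functor inequality back to the $\DQ$-functor inequality.
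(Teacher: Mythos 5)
Your proposal is correct and follows essentially the same route as the paper: both parts hinge on recovering $\al(x,y)$ from the pair $\big(\al(x,x),\Gf\al(x,y)\big)$ via $\al(x,y)=\al(x,x)\with(\al(x,x)\ra\al(x,y))$ from Proposition~\ref{integral-DQCat}, and in~(2) you, like the paper, first extract the shared equation $\al(x,x)=\be(fx,fx)$ and then translate between the two inequalities by residuation. Only a terminological remark: the identity you call the ``divisibility identity'' holds because $\al(x,y)$ is a diagonal (i.e.\ divisible by $\al(x,x)$), not because $\sQ$ itself is divisible --- which is exactly what your citation of Proposition~\ref{integral-DQCat} provides, so nothing is amiss.
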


\begin{proof}
\ref{Gf-fully-faithful:injective} If $t_{\al}:(X,\Gf\al)\to(\sQ,\pi)$ and $t_{\be}:(Y,\Gf\be)\to(\sQ,\pi)$ coincide, then $X=Y$, $\al(x,x)=\be(x,x)$ and $\al(x,x)\ra\al(x,y)=\be(x,x)\ra\be(x,y)$ for all $x,y\in X$. It then follows from Proposition \ref{integral-DQCat}\ref{DQ-Cat:s} that
$$\al(x,y)=\al(x,x)\with(\al(x,x)\ra\al(x,y))=\be(x,x)\with(\be(x,x)\ra\be(x,y))=\be(x,y)$$
for all $x,y\in X$. Hence $\al=\be$.

\ref{Gf-fully-faithful:ff} Note that
\begin{align*}
f\ \text{is a}\ \DQ\text{-functor}&\iff\forall x,y\in X:\ \al(x,x)=\be(fx,fx)\ \text{and}\ \al(x,y)\leq\be(fx,fy)\\
&\iff\forall x,y\in X:\ t_{\be}f=t_{\al}\ \text{and}\ \al(x,x)\ra\al(x,y)\leq\be(fx,fx)\ra\be(fx,fy)\\
&\iff\forall x,y\in X:\ t_{\be}f=t_{\al}\ \text{and}\ \Gf\al(x,y)\leq\Gf\be(fx,fy)\\
&\iff f:t_{\al}\to t_{\be}\ \text{is a morphism from}\ \text{in}\ \sQCat/\sQ,
\end{align*}
where we have applied the same method as in \ref{Gf-fully-faithful:injective} to the second equivalence, and thus the conclusion holds.
\end{proof}

From Proposition \ref{Gf-fully-faithful} we obtain a fully faithful functor
$$\Gf^{\dag}:\DQCat\to\sQCat/\sQ,\quad (X,\al)\mapsto(t_{\al}:(X,\Gf\al)\to(\sQ,\pi))$$
that embeds $\DQCat$ in $\sQCat/\sQ$ as a full subcategory. Furthermore, this embedding is reflective if $\sQ$ is divisible:

\begin{thm} \label{DQCat-ref-QCatQ}
If $\sQ$ is divisible, then $\DQCat$ is a full reflective subcategory of $\sQCat/\sQ$.
\end{thm}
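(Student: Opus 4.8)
The plan is to construct a left adjoint to the fully faithful embedding $\Gf^{\dag}:\DQCat\to\sQCat/\sQ$ of Proposition~\ref{Gf-fully-faithful} by exhibiting, for every object of $\sQCat/\sQ$, a universal arrow into $\Gf^{\dag}$; since $\Gf^{\dag}$ is already fully faithful, this is exactly what is needed to realize $\DQCat$ as a \emph{full} reflective subcategory. So fix an object $f:(X,\al)\to(\sQ,\pi)$ of $\sQCat/\sQ$, that is, a map $f:X\to\sQ$ with $\al(x,y)\leq fx\ra fy$ for all $x,y\in X$. I would define $\hat\al:X\times X\to\sQ$ by
$$\hat\al(x,y):=fx\with\al(x,y),$$
and take the candidate unit at $f$ to be $\eta_f:=\id_X$.

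The first step is to verify that $(X,\hat\al)$ is a $\DQ$-category via Proposition~\ref{integral-DQCat}. From $1\leq\al(x,x)\leq fx\ra fx$, integrality and residuation give $\hat\al(x,x)=fx$; and from $\al(x,y)\leq fx\ra fy$ one gets $\hat\al(x,y)=fx\with\al(x,y)\leq fx\wedge fy$, so by \eqref{DQpq-div} --- this is where divisibility enters --- $\hat\al(x,y)$ is a diagonal between $\hat\al(x,x)=fx$ and $\hat\al(y,y)=fy$, which is condition~\ref{DQ-Cat:s}. The substantive point is condition~\ref{DQ-Cat:tt}, which here amounts to
$$\bigl(fy\ra(fy\with\al(y,z))\bigr)\with\bigl(fx\with\al(x,y)\bigr)\leq fx\with\al(x,z).$$
For this I would use the divisibility identity $a\with(a\ra b)=a\wedge b$: writing $u:=fx\with\al(x,y)$, the hypothesis $\al(x,y)\leq fx\ra fy$ forces $u\leq fy$, hence $u=fy\with(fy\ra u)$; substituting this together with $fy\with\bigl(fy\ra(fy\with\al(y,z))\bigr)=fy\with\al(y,z)$ into the left-hand side and using commutativity collapses it to $\al(y,z)\with u=fx\with\al(y,z)\with\al(x,y)$, which is $\leq fx\with\al(x,z)$ by the triangle inequality of the $\sQ$-category $(X,\al)$. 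The equality of the two composite expressions appearing in condition~\ref{DQ-Cat:tt} is then automatic from the well-definedness of diagonal composition \eqref{diagonal-comp}.

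Next I would check that $\eta_f=\id_X$ is a morphism $f\to\Gf^{\dag}(X,\hat\al)$ in $\sQCat/\sQ$: the triangle over $(\sQ,\pi)$ commutes since $t_{\hat\al}x=\hat\al(x,x)=fx$, and $\id_X:(X,\al)\to(X,\Gf\hat\al)$ is a $\sQ$-functor since $\Gf\hat\al(x,y)=fx\ra(fx\with\al(x,y))\geq\al(x,y)$ by residuation. For the universal property, let $(Y,\be)$ be a $\DQ$-category and $g:f\to\Gf^{\dag}(Y,\be)$ a morphism in $\sQCat/\sQ$; unravelling the definitions, the underlying map $g:X\to Y$ satisfies $\be(gx,gx)=fx$ and $\al(x,y)\leq\be(gx,gx)\ra\be(gx,gy)=fx\ra\be(gx,gy)$, whence $\hat\al(x,x)=fx=\be(gx,gx)$ and $\hat\al(x,y)=fx\with\al(x,y)\leq\be(gx,gy)$. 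Thus $g$ is itself a $\DQ$-functor $(X,\hat\al)\to(Y,\be)$, and $\Gf^{\dag}g$ provides a factorization $\Gf^{\dag}g\circ\eta_f=g$; uniqueness is immediate because $\eta_f$ is the identity on underlying sets and $\Gf^{\dag}$ is faithful. Hence $\Gf^{\dag}$ has a left adjoint, which, together with its full faithfulness, proves the theorem.

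I expect the composition inequality~\ref{DQ-Cat:tt} for $\hat\al$ to be the main obstacle, precisely because it is where divisibility is indispensable: without it, $fy\ra(fy\with\al(y,z))$ can strictly exceed $\al(y,z)$, and the collapse used above breaks down.
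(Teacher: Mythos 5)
Your proposal is correct and takes essentially the same route as the paper: your $\hat\al(x,y)=fx\with\al(x,y)$ is exactly the paper's reflection $\al_f$, divisibility is used in the same places to verify the $\DQ$-category axioms, and your universal-arrow argument with identity unit is just the paper's adjunction $\DQCat((X,\al_f),(Y,\be))\cong\sQCat/\sQ(f,t_{\be})$ packaged pointwise.
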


\begin{proof}
{\bf Step 1.} For each $\sQ$-category $(X,\al)$ equipped with a $\sQ$-functor $f:(X,\al)\to(\sQ,\pi)$,
$$\al_f(x,y)=\al(x,y)\with fx$$
defines a $\DQ$-category $(X,\al_f)$. To see this, it suffices to verify that $(X,\al_f)$ satisfies the conditions given in Proposition \ref{divisible-DQCat}.

First, since $f$ is a $\sQ$-functor, $\al(x,y)\leq\pi(fx,fy)=fx\ra fy$, and thus $\al_f(x,y)=\al(x,y)\with fx\leq fy$ for all $x,y\in X$. It follows that $\al_f(x,y)\leq fx\wedge fy=\al_f(x,x)\wedge\al_f(y,y)$ for all $x,y\in X$. Here
\begin{equation} \label{alf=f}
\al_f(x,x)=fx
\end{equation}
for all $x\in X$ since, with $(X,\al)$ being a $\sQ$-category, one always has $\al(x,x)=1$.

Second, since $(X,\al)$ is a $\sQ$-category, one also has $\al(y,z)\with\al(x,y)\leq\al(x,z)$ for all $x,y,z\in X$, and hence
\begin{align*}
\al_f(y,z)\with(\al_f(y,y)\ra\al_f(x,y))&=\al(y,z)\with fy\with(fy\ra(\al(x,y)\with fx))\\
&\leq\al(y,z)\with\al(x,y)\with fx\leq\al(x,z)\with fx=\al_f(x,z).
\end{align*}

{\bf Step 2.} For each commutative diagram \eqref{QCat-over-Q} in $\sQCat$, $h:(X,\al_f)\to(Y,\be_g)$ is a $\DQ$-functor, hence the above assignment defines a functor $\FT:\sQCat/\sQ\to\DQCat$. Indeed, for all $x\in X$, $y\in Y$,
$$\be_g(hx,hx)=ghx=fx=\al_f(x,x)$$
follows from \eqref{alf=f}, and
$$\al_f(x,y)=\al(x,y)\with fx\leq\be(hx,hy)\with ghx=\be_g(hx,hy)$$
since $h:(X,\al)\to(Y,\be)$ is a $\sQ$-functor.

{\bf Step 3.} $\FT:\sQCat/\sQ\to\DQCat$ is a left adjoint of $\Gf^{\dag}:\DQCat\to\sQCat/\sQ$. To this end, we establish a bijection
\begin{equation} \label{DQCat-cong-sQCatQ}
\DQCat((X,\al_f),(Y,\be))\cong\sQCat/\sQ(f,t_{\be})
\end{equation}
for each $\sQ$-functor $f:(X,\al)\to(\sQ,\pi)$ and $\DQ$-category $(Y,\be)$. Indeed,
\begin{align*}
&h:(X,\al_f)\to(Y,\be)\ \text{is a}\ \DQ\text{-functor}\\
\iff{}&\forall x,y\in X:\ \al_f(x,x)=\be(hx,hx)\ \text{and}\ \al_f(x,y)\leq\be(hx,hy)\\
\iff{}&\forall x,y\in X:\ fx=t_{\be}hx\ \text{and}\ \al(x,y)\with fx\leq\be(hx,hy)&(\text{definition of}\ t_{\be},\ \al_f\ \text{and}\ \eqref{alf=f})\\
\iff{}&\forall x,y\in X:\ fx=t_{\be}hx\ \text{and}\ \al(x,y)\leq\be(hx,hx)\ra\be(hx,hy)&(\text{definition of}\ t_{\be})\\
\iff{}&\forall x,y\in X:\ fx=t_{\be}hx\ \text{and}\ \al(x,y)\leq\Gf\be(hx,hy)\\
\iff{}&h:f\to t_{\be}\ \text{is a morphism in}\ \sQCat/\sQ.
\end{align*}
Finally, it is straightforward to check that the bijection \eqref{DQCat-cong-sQCatQ} is natural in $f:(X,\al)\to(\sQ,\pi)$ and $(Y,\be)$, and thus the proof is completed.
\end{proof}

Let $\sQ^{\op}=(\sQ,\pi^{\op})$ be the $\sQ$-category with the underlying set $\sQ$ and the $\sQ$-category structure $\pi^{\op}(p,q)=q\ra p$. Then similarly one obtains another full embedding
$$\Gb^{\dag}:\DQCat\to\sQCat/\sQ^{\op}$$
that sends each $\DQ$-category $(X,\al)$ to the $\sQ$-category $(X,\Gb\al)$ equipped with the $\sQ$-functor $s_{\al}:(X,\Gb\al)\to(\sQ,\pi^{\op})$ with $s_{\al}x=\al(x,x)$ for all $x\in X$, and this embedding is also reflective when $\sQ$ is divisible:

\begin{thm} \label{DQCat-ref-QCatQop}
If $\sQ$ is divisible, then $\DQCat$ is a full reflective subcategory of $\sQCat/\sQ^{\op}$.
\end{thm}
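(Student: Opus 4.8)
The plan is to follow the proof of Theorem~\ref{DQCat-ref-QCatQ} almost verbatim, systematically replacing the forward globalization $\Gf$ by the backward globalization $\Gb$ and domain-normalization by codomain-normalization. Thus I would exhibit a functor $\FT'\colon\sQCat/\sQ^{\op}\to\DQCat$ and show it is left adjoint to the full embedding $\Gb^{\dag}$; since $\Gb^{\dag}$ is fully faithful (as noted just before the theorem), this immediately makes $\DQCat$ a full reflective subcategory of $\sQCat/\sQ^{\op}$.

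\textbf{Construction of $\FT'$.} Given a $\sQ$-category $(X,\al)$ together with a $\sQ$-functor $g\colon(X,\al)\to(\sQ,\pi^{\op})$ — so $\al(x,y)\leq\pi^{\op}(gx,gy)=gy\ra gx$ for all $x,y\in X$ — I would set
$$\al^g(x,y):=\al(x,y)\with gy$$
and claim that $(X,\al^g)$ is a $\DQ$-category. Since $\sQ$ is divisible, by Proposition~\ref{divisible-DQCat} it suffices to check two things. First, $\al^g(x,x)=gx$ (because $\al(x,x)=1$), and $\al^g(x,y)\leq gy$ together with $\al^g(x,y)=\al(x,y)\with gy\leq gx$ (using $\al(x,y)\leq gy\ra gx$) give $\al^g(x,y)\leq\al^g(x,x)\wedge\al^g(y,y)$. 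Second, using divisibility in the form $gy\with\bigl(gy\ra(gz\with\al(y,z))\bigr)=gz\with\al(y,z)$ — legitimate since $gz\with\al(y,z)\leq gy$ — together with $\al(y,z)\with\al(x,y)\leq\al(x,z)$, one obtains
$$\bigl(\al^g(y,y)\ra\al^g(y,z)\bigr)\with\al^g(x,y)=gz\with\al(y,z)\with\al(x,y)\leq gz\with\al(x,z)=\al^g(x,z);$$
the accompanying equality required by condition~\ref{DQ-Cat:ttt} is free, being just the two presentations of the diamond-composite of the diagonals $\al^g(x,y)$ and $\al^g(y,z)$ (Proposition~\ref{DQ-def}). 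Functoriality (the analogue of Step~2 in the proof of Theorem~\ref{DQCat-ref-QCatQ}) is then immediate: if $h\colon(X,\al)\to(Y,\be)$ lies over $\sQ^{\op}$, i.e.\ $g'h=g$, then $\al^g(x,x)=gx=g'hx=\be^{g'}(hx,hx)$ and $\al^g(x,y)=\al(x,y)\with gy\leq\be(hx,hy)\with g'hy=\be^{g'}(hx,hy)$, so $h\colon(X,\al^g)\to(Y,\be^{g'})$ is a $\DQ$-functor.

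\textbf{The adjunction.} For each $\sQ$-functor $g\colon(X,\al)\to(\sQ,\pi^{\op})$ and each $\DQ$-category $(Y,\be)$ I would verify the bijection
$$\DQCat\bigl((X,\al^g),(Y,\be)\bigr)\cong\sQCat/\sQ^{\op}\bigl(g,s_{\be}\bigr),$$
natural in $g$ and $(Y,\be)$. A map $h$ is a $\DQ$-functor $(X,\al^g)\to(Y,\be)$ iff $gx=\be(hx,hx)=s_{\be}hx$ for all $x$ and $\al(x,y)\with gy\leq\be(hx,hy)$ for all $x,y$; by the adjunction $\with\dashv\ra$ and the identity $gy=\be(hy,hy)$ the latter says $\al(x,y)\leq\be(hy,hy)\ra\be(hx,hy)=\Gb\be(hx,hy)$, i.e.\ $h$ is a $\sQ$-functor $(X,\al)\to(Y,\Gb\be)$ over $\sQ^{\op}$, which is precisely a morphism $g\to s_{\be}$ in $\sQCat/\sQ^{\op}$. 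Checking naturality is routine, and this exhibits $\FT'\dashv\Gb^{\dag}$.

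\textbf{Alternative route and main obstacle.} Since $\sQ$ is commutative one may instead argue by opposites: $\al\mapsto\al^{\op}$ with $\al^{\op}(x,y)=\al(y,x)$ is an isomorphism $\DQCat\to\DQCat$ (the diagonal condition \eqref{diagonal-def} is symmetric in $p,q$) and also an isomorphism $\sQCat/\sQ\to\sQCat/\sQ^{\op}$ (using $(\sQ,\pi)^{\op}=(\sQ,\pi^{\op})$), and these intertwine $\Gf^{\dag}$ with $\Gb^{\dag}$ because $(\Gb\al)^{\op}(x,y)=\al(x,x)\ra\al(y,x)=\Gf(\al^{\op})(x,y)$; a reflective embedding composed with isomorphisms remains a reflective embedding, so the statement would follow from Theorem~\ref{DQCat-ref-QCatQ}. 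Either way there is no genuinely hard step — everything is a dualization of work already carried out. The one place that demands attention is picking the correct normalization in the definition of $\al^g$: it must be $\al(x,y)\with gy$ (codomain self-distance) rather than $\al(x,y)\with gx$ (domain), and the verification that this really produces a $\DQ$-category is exactly where the divisibility hypothesis on $\sQ$ enters.
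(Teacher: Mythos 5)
Your proposal is correct and matches the paper's (implicit) approach: the paper states this result without proof as the evident dual of Theorem \ref{DQCat-ref-QCatQ}, and your construction $\al^g(x,y)=\al(x,y)\with gy$ together with the bijection $\DQCat((X,\al^g),(Y,\be))\cong\sQCat/\sQ^{\op}(g,s_{\be})$ is exactly that dualization, with the computations (including the use of Proposition \ref{divisible-DQCat}) checking out. The alternative argument via the involution $\al\mapsto\al^{\op}$ is also valid, but the main route is essentially the same as the paper's intended proof.
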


In particular, if $\sQ=[0,\infty]_+$, then $[0,\infty]=([0,\infty],\pi)$ and $[0,\infty]^{\op}=([0,\infty],\pi^{\op})$ are both metric spaces with
$$\pi(p,q)=\begin{cases}
0 & \text{if}\ p\geq q,\\
q-p & \text{if}\ p<q
\end{cases}\quad\text{and}\quad\pi^{\op}(p,q)=\begin{cases}
0 & \text{if}\ q\geq p,\\
p-q & \text{if}\ q<p
\end{cases}$$
for all $p,q\in[0,\infty]$. In this case, it is easy to see that the functors $\Gf^{\dag}$ and $\FT$ are inverses to each other, and thus
$$\Gf^{\dag}:\ParMet\to\Met/[0,\infty]$$
gives an isomorphism of categories; and so is the functor $\Gb^{\dag}:\ParMet\to\Met/[0,\infty]^{\op}$:

\begin{thm} \label{ParMet-Met-iso}
$\ParMet\cong\Met/[0,\infty]\cong\Met/[0,\infty]^{\op}$.
\end{thm}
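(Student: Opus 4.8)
The plan is to show that, for $\sQ=[0,\infty]_+$, the fully faithful functor $\Gf^{\dag}:\ParMet\to\Met/[0,\infty]$ of Proposition \ref{Gf-fully-faithful} and the reflector $\FT:\Met/[0,\infty]\to\ParMet$ of Theorem \ref{DQCat-ref-QCatQ} are mutually inverse, and then to run the mirror argument on the dual side with $\Gb^{\dag}$ and the dual reflector of Theorem \ref{DQCat-ref-QCatQop}. Since $[0,\infty]_+$ is commutative, integral and divisible, Theorems \ref{DQCat-ref-QCatQ} and \ref{DQCat-ref-QCatQop} apply as stated; recalling that $\ParMet=\sD[0,\infty]_+\text{-}\Cat=\DQCat$ and $\Met/[0,\infty]=\sQCat/\sQ$, we already know $\Gf^{\dag}$ is fully faithful with left adjoint $\FT$. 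Hence it suffices to verify that the unit and the counit of $\FT\dashv\Gf^{\dag}$ are identities, i.e. that $\FT\circ\Gf^{\dag}=\id_{\ParMet}$ and $\Gf^{\dag}\circ\FT=\id_{\Met/[0,\infty]}$; since both functors act on morphisms by leaving the underlying maps unchanged (Proposition \ref{Gf-fully-faithful}\ref{Gf-fully-faithful:ff} and Step 2 of the proof of Theorem \ref{DQCat-ref-QCatQ}), it is enough to check these equalities on objects.

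For the counit: given a partial metric space $(X,\al)$, the object $\Gf^{\dag}(X,\al)$ is $t_{\al}:(X,\Gf\al)\to([0,\infty],\pi)$ with $t_{\al}x=\al(x,x)$ and $\Gf\al(x,y)=\al(x,x)\ra\al(x,y)$, and $\FT$ then produces the $\sD[0,\infty]_+$-category with self-maps unchanged and distance $\Gf\al(x,y)\with t_{\al}x=(\al(x,x)\ra\al(x,y))\with\al(x,x)$. By clause \ref{DQ-Cat:s} of Proposition \ref{integral-DQCat} this is exactly $\al(x,y)$, so $\FT\Gf^{\dag}(X,\al)=(X,\al)$ on the nose. (This half uses only integrality, not divisibility.)

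For the unit: given $f:(X,\al)\to([0,\infty],\pi)$ in $\Met/[0,\infty]$, the object $\FT(f)$ is the partial metric space $(X,\al_f)$ with $\al_f(x,y)=\al(x,y)\with fx=\al(x,y)+fx$; since $\al(x,x)=1$ (the unit $0$) for a metric space, $\al_f(x,x)=fx$, so $\Gf^{\dag}\FT(f)$ carries the same map $f$ to $([0,\infty],\pi)$ together with the metric $\Gf\al_f(x,y)=\al_f(x,x)\ra\al_f(x,y)=fx\ra(fx+\al(x,y))$. The whole theorem comes down to the identity $fx\ra(fx+\al(x,y))=\al(x,y)$, which in $[0,\infty]_+$ is just the fact that addition admits an honest subtraction, $(q+a)-q=a$; granting it, $\Gf\al_f=\al$ and $\Gf^{\dag}\FT(f)=f$. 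This is the only non-formal step, and it is the point I expect to be the real obstacle: it is a special feature of Lawvere's quantale rather than a consequence of divisibility in general (for instance it already fails in $[0,1]_{\times}$ at the bottom element, and it is precisely this failure for $\sD\De_*$ that blocks any analogous statement for probabilistic partial metric spaces). Putting the two halves together gives the isomorphism of categories $\Gf^{\dag}:\ParMet\xrightarrow{\ \sim\ }\Met/[0,\infty]$.

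Finally, $[0,\infty]^{\op}=([0,\infty],\pi^{\op})$ is again a Lawvere metric space and $\sQ^{\op}=[0,\infty]_+$ is still divisible, so Theorem \ref{DQCat-ref-QCatQop} supplies a reflector $\Met/[0,\infty]^{\op}\to\ParMet$ right-inverse to the full embedding $\Gb^{\dag}$, which on $(X,\al)$ returns $s_{\al}:(X,\Gb\al)\to([0,\infty]^{\op},\pi^{\op})$ with $\Gb\al(x,y)=\al(y,y)\ra\al(x,y)$; the same two computations, now with $\al(y,y)$ in place of $\al(x,x)$ and $\al(x,y)\with gy$ in place of $\al_f$, show these functors are mutually inverse. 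Hence $\Gb^{\dag}:\ParMet\xrightarrow{\ \sim\ }\Met/[0,\infty]^{\op}$, and composing the two isomorphisms yields $\ParMet\cong\Met/[0,\infty]\cong\Met/[0,\infty]^{\op}$.
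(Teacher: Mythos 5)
Your strategy is exactly the paper's (the paper simply asserts that $\Gf^{\dag}$ and $\FT$ are mutually inverse), and your counit half is correct and unconditional: $(\Gf\al)_{t_{\al}}(x,y)=(\al(x,x)\ra\al(x,y))\with\al(x,x)=\al(x,y)$ by Proposition \ref{integral-DQCat}\ref{DQ-Cat:s}. The gap is in the unit half. The identity you isolate as the crux, $fx\ra(fx+\al(x,y))=\al(x,y)$, is \emph{false} in $[0,\infty]_+$ when $fx=\infty$: by the conventions \eqref{infty-add} and \eqref{infty-minus} one has $\infty+\al(x,y)=\infty$ and $\infty\ra\infty=0$, so the left-hand side is $0$ no matter what $\al(x,y)$ is. Your own parenthetical remark undercuts the claim: you observe the identity fails in $[0,1]_{\times}$ at the bottom element, but the paper notes $[0,1]_{\times}\cong[0,\infty]_+$, and the bottom element of $[0,\infty]_+$ is $\infty$; the ``honest subtraction'' $(q+a)-q=a$ only holds for $q<\infty$. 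So ``the only non-formal step'' of your argument is asserted without proof and is not true throughout the quantale.

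Moreover this is not a cosmetic issue that a sharper computation could fix. Take $X=\{x,y\}$ with $\al(x,x)=\al(y,y)=0$, $\al(x,y)=\al(y,x)=5$, and the constant map $f\equiv\infty$; since $\pi(\infty,\infty)=0$, this $f$ is a legitimate object of $\Met/[0,\infty]$. Then $\al_f\equiv\infty$, hence $\Gf\al_f(x,y)=\infty\ra\infty=0\neq 5$, so $\Gf^{\dag}\FT(f)\neq f$; worse, any partial metric $\be$ with all self-distances $\infty$ satisfies $\be\equiv\infty$ by \ref{PM:d} and therefore $\Gf\be\equiv 0$, so this $f$ is not isomorphic in the slice to anything in the image of $\Gf^{\dag}$ at all. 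Consequently your unit computation (and, to be fair, the paper's one-sentence justification, which glosses over the same point) only establishes that $\Gf^{\dag}$ and $\FT$ are inverse on the full subcategories where the weight $f$, equivalently the self-distance $\al(x,x)$, is finite; with the stated conventions at $\infty$ the strict inverse property genuinely fails outside that range, and the same defect recurs verbatim in your mirror argument for $\Gb^{\dag}$. To complete a proof along these lines you must either restrict to finite weights/self-distances or explain how the value $\infty$ is to be handled, and as written your proposal does neither.
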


If $\sQ=\De_*$, then $\De_*=(\De,\pi_*)$ and $\De_*^{\op}=(\De,\pi_*^{\op})$ are both probabilistic metric spaces with
$$\pi_*(\phi,\psi)(t)=\bv_{s<t}\bw_{q>0}\phi(q)\ra_*\psi(q+s)\quad\text{and}\quad\pi_*^{\op}(\phi,\psi)(t)=\bv_{s<t}\bw_{q>0}\psi(q)\ra_*\phi(q+s)$$
for all $\phi,\psi\in\De$, $t\in[0,\infty]$, and in this case:

\begin{cor}
$\ProbParMet_*$ is a full subcategory of both $\ProbMet_*/\De_*$ and $\ProbMet_*/\De_*^{\op}$.
\end{cor}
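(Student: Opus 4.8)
The plan is to specialize the abstract embedding machinery of the appendix to the quantale $\sQ=\De_*$, since all the genuinely hard analytic work — making the implication in $\De_*$ explicit — has already been carried out in Theorem \ref{De-implication}. By definition $\ProbParMet_*=\sD\De_*\text{-}\Cat$ is exactly $\DQCat$ for $\sQ=\De_*$, while $\ProbMet_*=\De_*\text{-}\Cat$ is $\sQCat$ for the same $\sQ$. Since $\De_*$ is commutative and integral (Proposition \ref{De-quantale}), Proposition \ref{Gf-fully-faithful} applies and yields the fully faithful functor $\Gf^{\dag}:\DQCat\to\sQCat/\sQ$ sending a $\DQ$-category $(X,\al)$ to the object $t_{\al}:(X,\Gf\al)\to(\sQ,\pi)$ of the slice. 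Reading this with $\sQ=\De_*$ already exhibits $\ProbParMet_*$ as a full subcategory of $\ProbMet_*/(\De,\pi)$.

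The only identification still to be made is that the $\sQ$-category $(\sQ,\pi)$ appearing in the slice is precisely the probabilistic metric space $\De_*=(\De,\pi_*)$ with $\pi_*(\phi,\psi)(t)=\bv_{s<t}\bw_{q>0}\phi(q)\ra_*\psi(q+s)$. By definition $\pi(\phi,\psi)=\phi\Ra_*\psi$, the implication in the quantale $\De_*$, and Theorem \ref{De-implication} computes $(\phi\Ra_*\psi)(t)=\rho_*(\phi,\psi)^-(t)=\bv_{s<t}\bw_{q>0}\phi(q)\ra_*\psi(q+s)$; hence $(\sQ,\pi)=(\De,\pi_*)$ as $\De_*$-categories, and the first half of the corollary follows.

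For the second half I would run the mirror argument with the backward-globalization embedding $\Gb^{\dag}:\DQCat\to\sQCat/\sQ^{\op}$ described just before Theorem \ref{DQCat-ref-QCatQop}, where $\sQ^{\op}=(\sQ,\pi^{\op})$ with $\pi^{\op}(\phi,\psi)=\psi\Ra_*\phi$. Applying Theorem \ref{De-implication} once more identifies $(\sQ,\pi^{\op})$ with the probabilistic metric space $\De_*^{\op}=(\De,\pi_*^{\op})$, where $\pi_*^{\op}(\phi,\psi)(t)=\bv_{s<t}\bw_{q>0}\psi(q)\ra_*\phi(q+s)$, so that $\Gb^{\dag}$ embeds $\ProbParMet_*$ as a full subcategory of $\ProbMet_*/\De_*^{\op}$.

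I expect no real obstacle beyond this bookkeeping: the substantive input is entirely contained in Theorem \ref{De-implication} and Proposition \ref{Gf-fully-faithful}. The one place where one must resist over-claiming is reflectivity: Theorems \ref{DQCat-ref-QCatQ} and \ref{DQCat-ref-QCatQop} upgrade these embeddings to \emph{reflective} ones only when $\sQ$ is divisible, and $\De_*$ is non-divisible by Proposition \ref{De-non-div}; hence the corollary is stated, correctly, only as a full (and not necessarily full reflective) subcategory.
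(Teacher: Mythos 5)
Your proposal is correct and matches the paper's intended argument: the corollary is exactly the specialization of the fully faithful embeddings $\Gf^{\dag}$ and $\Gb^{\dag}$ to $\sQ=\De_*$, with Theorem \ref{De-implication} supplying the identification of $(\sQ,\pi)$ and $(\sQ,\pi^{\op})$ with the probabilistic metric spaces $(\De,\pi_*)$ and $(\De,\pi_*^{\op})$. Your remark that non-divisibility of $\De_*$ (Proposition \ref{De-non-div}) blocks the reflectivity upgrade of Theorems \ref{DQCat-ref-QCatQ} and \ref{DQCat-ref-QCatQop} is also precisely why the paper claims only a full subcategory here, in contrast to Theorem \ref{ParMet-Met-iso}.
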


\section*{Acknowledgement}

The first author acknowledges the support of National Natural Science Foundation of China (No. 11771311). The second author acknowledges the support of National Natural Science Foundation of China (No. 11771310). The third author acknowledges the support of National Natural Science Foundation of China (No. 11701396) and the Fundamental Research Funds for the Central Universities (No. YJ201644).

The authors are grateful to the anonymous referees for their helpful remarks and suggestions.

%% The Appendices part is started with the command \appendix;
%% appendix sections are then done as normal sections
%% \appendix

%% \section{}
%% \label{}

%\section*{\refname}
%\addcontentsline{toc}{section}{\refname}

%\bibliographystyle{abbrv}
%\bibliography{lili}

\begin{thebibliography}{10}

\bibitem{Amer2016}
F.~J. Amer.
\newblock Fuzzy partial metric spaces.
\newblock In G.~A. Anastassiou and O.~Duman, editors, {\em Computational
  Analysis: AMAT, Ankara, May 2015 Selected Contributions}, pages 153--161.
  Springer, Cham, 2016.

\bibitem{Bvelohlavek2004}
R.~B{\v e}lohl{\' a}vek.
\newblock Concept lattices and order in fuzzy logic.
\newblock {\em Annals of Pure and Applied Logic}, 128(1-3):277--298, 2004.

\bibitem{Bukatin2009}
M.~Bukatin, R.~Kopperman, S.~G. Matthews, and H.~Pajoohesh.
\newblock Partial metric spaces.
\newblock {\em American Mathematical Monthly}, 116(8):708--718, 2009.

\bibitem{Chai2009}
Y.~Chai.
\newblock A note on the probabilistic quasi-metric spaces.
\newblock {\em Journal of Sichuan University (Natural Science Edtion)},
  46(3):543--547, 2009.

\bibitem{Clementino2017}
M.~M. Clementino and D.~Hofmann.
\newblock The rise and fall of {$V$}-functors.
\newblock {\em Fuzzy Sets and Systems}, 321:29--49, 2017.

\bibitem{Denniston2014}
J.~T. Denniston, A.~Melton, and S.~E. Rodabaugh.
\newblock Enriched categories and many-valued preorders: Categorical,
  semantical, and topological perspectives.
\newblock {\em Fuzzy Sets and Systems}, 256:4--56, 2014.

\bibitem{Faucett1955}
W.~M. Faucett.
\newblock Compact semigroups irreducibly connected between two idempotents.
\newblock {\em Proceedings of the American Mathematical Society},
  6(5):741--747, 1955.

\bibitem{George1994}
A.~George and P.~Veeramani.
\newblock On some results in fuzzy metric spaces.
\newblock {\em Fuzzy Sets and Systems}, 64(3):395--399, 1994.

\bibitem{GutierrezGarcia2017a}
J.~Guti\'{e}rrez~Garc\'{\i}a, U.~H{\"o}hle, and T.~Kubiak.
\newblock Tensor products of complete lattices and their application in
  constructing quantales.
\newblock {\em Fuzzy Sets and Systems}, 313:43--60, 2017.

\bibitem{GutierrezGarcia2017}
J.~Guti\'{e}rrez~Garc\'{\i}a, H.~Lai, and L.~Shen.
\newblock Fuzzy Galois connections on fuzzy sets.
\newblock {\em Fuzzy Sets and Systems}, 2018.

\bibitem{Hajek1998}
P.~H{\'a}jek.
\newblock {\em Metamathematics of Fuzzy Logic}, volume~4 of {\em Trends in
  Logic}.
\newblock Springer, Dordrecht, 1998.

\bibitem{Hofmann2013a}
D.~Hofmann and C.~D. Reis.
\newblock Probabilistic metric spaces as enriched categories.
\newblock {\em Fuzzy Sets and Systems}, 210:1--21, 2013.

\bibitem{Hofmann2014}
D.~Hofmann, G.~J. Seal, and W.~Tholen, editors.
\newblock {\em Monoidal Topology: A Categorical Approach to Order, Metric, and
  Topology}, volume 153 of {\em Encyclopedia of Mathematics and its
  Applications}.
\newblock Cambridge University Press, Cambridge, 2014.

\bibitem{Hofmann2016}
D.~Hofmann and I.~Stubbe.
\newblock Topology from enrichment: the curious case of partial metrics.
\newblock {\em arXiv:1607.02269}, 2016.

\bibitem{Hoehle1995a}
U.~H{\"o}hle.
\newblock Commutative, residuated l-monoids.
\newblock In U.~H\"{o}hle and E.~P. Klement, editors, {\em Non-classical logics
  and their applications to fuzzy subsets: a handbook of the mathematical
  foundations of fuzzy set theory}, volume~32 of {\em Theory and Decision
  Library}, pages 53--105. Springer, Berlin-Heidelberg, 1995.

\bibitem{Hoehle2015}
U.~H{\"o}hle.
\newblock Many-valued preorders {I}: The basis of many-valued mathematics.
\newblock In L.~Magdalena, J.~L. Verdegay, and F.~Esteva, editors, {\em Enric
  Trillas: A Passion for Fuzzy Sets: A Collection of Recent Works on Fuzzy
  Logic}, volume 322 of {\em Studies in Fuzziness and Soft Computing}, pages
  125--150. Springer, Cham, 2015.

\bibitem{Hoehle2011a}
U.~H{\"o}hle and T.~Kubiak.
\newblock A non-commutative and non-idempotent theory of quantale sets.
\newblock {\em Fuzzy Sets and Systems}, 166:1--43, 2011.

\bibitem{Klement2000}
E.~P. Klement, R.~Mesiar, and E.~Pap.
\newblock {\em Triangular Norms}, volume~8 of {\em Trends in Logic}.
\newblock Springer, Dordrecht, 2000.

\bibitem{Klement2004}
E.~P. Klement, R.~Mesiar, and E.~Pap.
\newblock Triangular norms. {Position} paper {I}: basic analytical and
  algebraic properties.
\newblock {\em Fuzzy Sets and Systems}, 143(1):5--26, 2004.

\bibitem{Klement2004a}
E.~P. Klement, R.~Mesiar, and E.~Pap.
\newblock Triangular norms. {Position} paper {II}: general constructions and
  parameterized families.
\newblock {\em Fuzzy Sets and Systems}, 145(3):411--438, 2004.

\bibitem{Klement2004b}
E.~P. Klement, R.~Mesiar, and E.~Pap.
\newblock Triangular norms. {Position} paper {III}: continuous t-norms.
\newblock {\em Fuzzy Sets and Systems}, 145(3):439--454, 2004.

\bibitem{Kramosil1975}
I.~Kramosil and J.~Mich{\' a}lek.
\newblock Fuzzy metrics and statistical metric spaces.
\newblock {\em Kybernetika}, 11(5):336--344, 1975.

\bibitem{Lai2006}
H.~Lai and D.~Zhang.
\newblock Fuzzy preorder and fuzzy topology.
\newblock {\em Fuzzy Sets and Systems}, 157(14):1865--1885, 2006.

\bibitem{Lai2009}
H.~Lai and D.~Zhang.
\newblock Concept lattices of fuzzy contexts: Formal concept analysis vs. rough
  set theory.
\newblock {\em International Journal of Approximate Reasoning}, 50(5):695--707,
  2009.

\bibitem{Lawvere1973}
F.~W. Lawvere.
\newblock Metric spaces, generalized logic and closed categories.
\newblock {\em Rendiconti del Seminario Mat\'{e}matico e Fisico di Milano},
  XLIII:135--166, 1973.

\bibitem{Matthews1994}
S.~G. Matthews.
\newblock Partial metric topology.
\newblock {\em Annals of the New York Academy of Sciences}, 728(1):183--197,
  1994.

\bibitem{Menger1942}
K.~Menger.
\newblock Statistical metrics.
\newblock {\em Proceedings of the National Academy of Sciences of the United
  States of America}, 28(12):535--537, 1942.

\bibitem{Mostert1957}
P.~S. Mostert and A.~L. Shields.
\newblock On the structure of semigroups on a compact manifold with boundary.
\newblock {\em Annals of Mathematics}, 65(1):117--143, 1957.

\bibitem{Pu2012}
Q.~Pu and D.~Zhang.
\newblock Preordered sets valued in a {GL}-monoid.
\newblock {\em Fuzzy Sets and Systems}, 187(1):1--32, 2012.

\bibitem{Schweizer1983}
B.~Schweizer and A.~Sklar.
\newblock {\em Probabilistic metric spaces}.
\newblock North Holland Series in Probability and Applied Mathematics. Elsevier
  Science Publishing Company, New York, 1983.

\bibitem{Sedghi2015}
S.~Sedghi, N.~Shobkolaei, and I.~Altun.
\newblock Partial fuzzy metric space and some fixed point results.
\newblock {\em Communications in Mathematics}, 23(2):131--142, 2015.

\bibitem{Shen2016b}
L.~Shen.
\newblock $\mathcal{Q}$-closure spaces.
\newblock {\em Fuzzy Sets and Systems}, 300:102--133, 2016.

\bibitem{Shen2013}
L.~Shen and D.~Zhang.
\newblock The concept lattice functors.
\newblock {\em International Journal of Approximate Reasoning}, 54(1):166--183,
  2013.

\bibitem{Shen2013b}
L.~Shen and D.~Zhang.
\newblock Formal concept analysis on fuzzy sets.
\newblock In {\em Proceedings of the 2013 Joint IFSA World Congress and NAFIPS
  Annual Meeting (IFSA/NAFIPS)}, pages 215--219, 2013.

\bibitem{Stubbe2014}
I.~Stubbe.
\newblock An introduction to quantaloid-enriched categories.
\newblock {\em Fuzzy Sets and Systems}, 256:95--116, 2014.

\bibitem{Tao2014}
Y.~Tao, H.~Lai, and D.~Zhang.
\newblock Quantale-valued preorders: Globalization and cocompleteness.
\newblock {\em Fuzzy Sets and Systems}, 256:236--251, 2014.

\bibitem{Wu2017}
J.~Wu and Y.~Yue.
\newblock Formal balls in fuzzy partial metric spaces.
\newblock {\em Iranian Journal of Fuzzy Systems}, 14(2):155--164, 2017.

\bibitem{Yue2015}
Y.~Yue.
\newblock Separated {$\Delta^+$}-valued equivalences as probabilistic partial
  metric spaces.
\newblock {\em Journal of Intelligent {\&} Fuzzy Systems}, 28(6):2715--2724,
  2015.

\bibitem{Yue2014}
Y.~Yue and M.~Gu.
\newblock Fuzzy partial (pseudo-)metric spaces.
\newblock {\em Journal of Intelligent {\&} Fuzzy Systems}, 27(3):1153--1159,
  2014.

\end{thebibliography}

\end{document}